\numberwithin{equation}{section}
\numberwithin{figure}{section}
\theoremstyle{plain}
\newtheorem{theorem}{Theorem}[section]
\newtheorem{lemma}[theorem]{Lemma}
\newtheorem{corollary}[theorem]{Corollary}
\theoremstyle{definition}
\theoremstyle{remark}
\newtheorem{remark}[theorem]{Remark}
\def\BB{\mathbb{B}}
\def\EE{\mathbb{E}}
\def\MM{\mathbb{M}}
\def\NN{\mathbb{N}}
\def\PP{\mathbb{P}}
\def\RR{\mathbb{R}}
\def\XX{\mathbb{X}}
\def\ZZ{\mathbb{Z}}
\def\sfN{{\sf N}}
\def\cC{\mathcal{C}}
\def\cD{\mathcal{D}}
\def\cF{\mathcal{F}}
\def\cL{\mathcal{L}}
\def\cM{\mathcal{M}}
\def\cN{\mathcal{N}}
\def\cO{\mathcal{O}}
\def\cX{\mathcal{X}}
\newcommand{\sk}[1]{\mathscr{#1}}
\newcommand{\dd}{{\rm d}}
\newcommand{\conv}{\mathop{\mathrm{conv}}\nolimits}
\newcommand{\aff}{\mathop{\mathrm{aff}}\nolimits}
\newcommand{\pow}{\mathop{\mathrm{pow}}\nolimits}
\newcommand{\inter}{\operatorname{int}}
\newcommand{\ext}{\operatorname{ext}}
\newcommand{\ver}{\operatorname{vert}}
\newcommand{\Vol}{\operatorname{Vol}}
\newcommand{\skel}{\operatorname{skel}}
\newcommand{\dint}{\textup{d}}
\newcommand{\apex}{\mathop{\mathrm{apex}}\nolimits}
\let\@fnsymbol\@alph
\begin{document}
	
	\title{\bfseries The $\beta$-Delaunay tessellation II:\\ The Gaussian limit tessellation}
	
	\author{Anna Gusakova\footnotemark[1],\; Zakhar Kabluchko\footnotemark[2],\; and Christoph Th\"ale\footnotemark[3]}
	
	\date{}
	\renewcommand{\thefootnote}{\fnsymbol{footnote}}
	\footnotetext[1]{Ruhr University Bochum, Germany. Email: anna.gusakova@rub.de}
	
	\footnotetext[2]{M\"unster University, Germany. Email: zakhar.kabluchko@uni-muenster.de}
	
	\footnotetext[3]{Ruhr University Bochum, Germany. Email: christoph.thaele@rub.de}
	
	\maketitle

\begin{abstract}
\noindent We study the weak convergence of $\beta$- and $\beta'$-Delaunay tessellations in $\mathbb{R}^{d-1}$ that were introduced in part I of this paper, as $\beta\to\infty$. The limiting stationary simplicial random tessellation, which is called the Gaussian-Delaunay tessellation, is characterized in terms of a space-time paraboloid hull process in $\mathbb{R}^{d-1}\times\mathbb{R}$. The latter object has previously appeared in the analysis of the number of shocks in the solution of the inviscid Burgers' equation and the description of the local asymptotic geometry of Gaussian random polytopes. In this paper it is used to define a new stationary random simplicial tessellation in $\mathbb{R}^{d-1}$. As for the $\beta$- and $\beta'$-Delaunay tessellation, the distribution of volume-power weighted typical cells in the Gaussian-Delaunay tessellation is explicitly identified, establishing thereby a new bridge to Gaussian random simplices. Also major geometric characteristics of these cells such as volume moments, expected angle sums and also the cell intensities of the Gaussian-Delaunay tessellation are investigated.\\
		
		\noindent {\bf Keywords}. {Angle sums, beta-Delaunay tessellation, beta'-Delaunay tessellation, Gaussian-Delaunay tessellation, Gaussian simplex, Laguerre tessellation, paraboloid convexity, paraboloid hull process, Poisson point process, stochastic geometry, typical cell, weighted typical cell}\\
		{\bf MSC}. 52A22, 52B11, 53C65, 60D05, 60F05, 60F17, 60G55.
	\end{abstract}

\small
\tableofcontents
\normalsize
\section{Introduction}

In part I of this paper \cite{Part1} we introduced two new classes of stationary random simplicial tessellations in $\RR^{d-1}$, the $\beta$-Delaunay tessellation (with $\beta>-1$) and the $\beta'$-Delaunay tessellation (with $\beta>(d+1)/2$). Their construction, which we recall in detail in Section \ref{subsec:betaDelaunay} below, is based on a paraboloid growth and hull processes, which were originally introduced in \cite{CSY13,SY08} in order to describe the asymptotic geometry of random convex hulls in the unit ball. The definition of the paraboloid growth and hull processes in turn is based on a space-time Poisson point process on the product space $\RR^{d-1}\times\RR_+$ (for the $\beta$-Delaunay tessellation) or $\RR^{d-1}\times\RR_-^*:=\RR^{d-1}\times(-\infty,0)$ (for the $\beta'$-Delaunay tessellation) whose intensity measure is translation invariant in the spatial (i.e.\ $\RR^{d-1}$) component and has a power-law density function in the time or height (i.e.\ $\RR_+$ or $\RR_-^*$) coordinate. We used these processes to generalize the well-known construction of the Poisson-Delaunay tessellation, which is one of the most classical tessellation models considered in stochastic geometry and related fields. Moreover, in \cite{Part1} we gave an explicit description of weighted typical cells of both the $\beta$- and the $\beta'$-Delaunay tessellations from which we computed several key characteristic quantities such as volume moments, expected angle sums and also cell intensities. In particular, this demonstrates that both the $\beta$- and $\beta'$-Delaunay tessellations are analytically tractable, which makes them attractive alternatives to the classical Poisson-Delaunay tessellation. Let us point out that the latter are widely used in wireless network modelling \cite{BarthelemyNetwork,GlisicWireless,JahnelKoenig} or material sciences \cite{OstojaMaterial}.

In the present paper we study the limiting behaviour of the sequence of $\beta$- and $\beta'$-Delaunay tessellations in $\RR^{d-1}$, as $\beta\to\infty$. Under a suitable rescaling, one can verify (see Lemma \ref{lm:processconverge} below) that the Poisson point processes underlying the constructions of the paraboloid growth and hull processes (and hence the $\beta$- and $\beta'$-Delaunay tessellation) converge in distribution to a Poisson point process on $\RR^{d-1}\times\RR$ whose intensity measure is again translation invariant in the spatial component and has density $e^{h/2}$ in the time or height coordinate. Such Poisson point processes together with their parabolic hulls have previously appeared in the literature, namely in the expectation and variance asymptotics for the number of shocks in the solution of the inviscid Burgers equation \cite{BaryshnikovBurgers}, as well as in the description of geometric properties of convex hulls of Gaussian random points~\cite{CYGaussian}. A closely related construction appeared as an example of a max-stable process in~\cite{eddy_gale,smith}. We use this point process to construct the related paraboloid growth and hull process and to define another stationary random simplicial tessellation in $\RR^{d-1}$, which we call the Gaussian-Delaunay tessellation. Given these constructions, it seems natural to expect a similar (functional) convergence on the level of the random tessellations themselves, i.e., that, as $\beta\to\infty$, both the $\beta$- and $\beta'$-Delaunay tessellation weakly converge to the Gaussian-Delaunay tessellation. Of course, in order to make this rigorous, one needs to make precise what weak convergence of random tessellations formally means. However, this is by far not straightforward, since there is no natural topological structure on the space of tessellations and it is not clear whether the space of tessellations of $\RR^{d-1}$ is even Polish. This turns problems related to the convergence of random tessellations into a highly non-trivial task, which has only very rarely been tackled in the existing literature (see \cite{NagelWeissSTIT1,NagelWeissSTIT2} for the only example we are aware of). One way to work around this problem is by identifying a tessellation with its skeleton, by which we mean the union of the boundaries of all of its cells. From here on one can work with the well-established notion of weak convergence of random closed sets with respect to the usual Fell topology, which is conveniently handled by means of the so-called capacity functional, see \cite[Chapter 16]{Kallenberg} or \cite[Chapter 2]{SW}. Eventually, we deal with the capacity functional of the $\beta$- and $\beta'$-Delaunay tessellation by resorting to the parabolic growth and hull processes underlying their constructions and by proving a kind of localization property for these processes, which is very much in the spirit of the well established geometric limit theory of stabilization for which we refer to the survey articles \cite{SchreiberSurvey,YukichSurvey}. This constitutes the most technical part of this paper, in which we show that, after suitable rescaling, the capacity functional of the $\beta$- and $\beta'$-Delaunay tessellation converges to that of the Gaussian-Delaunay tessellation, proving thereby the weak convergence of the random tessellations mentioned earlier. In addition, our proof shows convergence in a topology which is much stronger than the usual Fell topology on the space of closed subsets of $\RR^{d-1}$. This is of interest, because quantities like the number of cells or the total surface content in a bounded window become continuous functionals in this stronger topology, but are not continuous with respect to the Fell topology. This new perspective on the weak convergence of random tessellations (or more general random closed sets) is of independent interest and might be fruitful also in the study of functional limit theorems of other stochastic geometry models beyond the ones we investigate in this paper.

It is another goal of this paper to discuss and to determine principal geometric properties and characteristics of Gaussian-Delaunay tessellations. For that reason we follow the lines of part I \cite{Part1}, introduce the class of volume-power weighted typical cells and compute explicitly their volume moments, expected angle sums and cell intensities. In particular, this class of cells includes the typical cell (in the usual sense of Palm calculus) and, up to translation, the zero-cell (the almost surely uniquely determined cell containing the origin) of the Gaussian-Delaunay tessellation. The results we obtain in this context will be a consequence of a probabilistic description of volume-power weighted typical cells in terms of Gaussian random simplices. As for the $\beta$- and the $\beta'$-Delaunay tessellation this demonstrates that also the new Gaussian-Delaunay tessellation is an analytically tractable model of a simplicial random tessellation. In particular, the fact that the typical cell has the same distribution is a volume-weighted Gaussian random simplex is the reason why we call the limiting object the Gaussian-Delaunay tessellation.

The remaining parts of this paper are organized as follows. In Section \ref{sec:Prelim} we gather some necessary background material, especially about point processes and tessellations (Section \ref{subsec:PPTess}) and the construction of Laguerre diagrams and tessellations (Section \ref{sec:Laguerre_tess}). In Section \ref{sec:Constructions} we explain the paraboloid growth and hull processes that lead to the definition of $\beta$-, $\beta'$- and Gaussian-Delaunay tessellations. Our main result about the convergence of $\beta$- and $\beta'$-Delaunay tessellations towards the Gaussian-Delaunay tessellation is presented and proved in Section \ref{sec:Convergence}. In the final Section \ref{sec:Cells} a probabilistic representation of volume-power weighted typical cells is found and some key geometric characteristics of such cells are computed.

\section{Preliminaries}\label{sec:Prelim}

\subsection{Notation}

A centred unit Euclidean ball in $\RR^d$ is denoted by $\BB^d$ and its volume is given by
$$
\kappa_d:=\frac{\pi^{d/2}}{\Gamma(1+{d\over 2})}.
$$
By $B_{r}(w)$ we denote the closed $(d-1)$-dimensional ball of radius $r>0$ centred at $w\in\RR^{d-1}$. We will also use a simplified notation for the ball $B_r:=B_r(0)$ centred at $w=0$. 

In what follows we will often use two equivalent systems of coordinates, namely, we represent points $x\in\RR^d$ as $x:=(v,h)\in \RR^d$ with $v\in\RR^{d-1}$ (usually called the spatial coordinate) and $h\in\RR$ (referred to as the height or time coordinate).   Finally, the Euclidean norm is indicated by $\|\,\cdot\,\|$.

Given a set $C$ in a Euclidean space we denote by $\conv(C)$ its convex hull, by ${\rm int}(C)$ its interior and by $\partial C$ its boundary. We denote by $\RR_{+}$ the set of all non-negative real numbers and by $\RR_{-}^*:=(-\infty,0)$ the set of all negative real numbers.

For two functions $f,g:\RR^k\to\RR$, $k\in\NN$, we will frequently use the notation $f(x_1,\ldots,x_k)\ll_{c_1,c_2,\ldots} g(x_1,\ldots,x_k)$, which means that there exists a positive constant $c$, which depends on parameters $c_1,c_2,\ldots$ but is independent of $(x_1,\ldots,x_k)\in\RR^k$, such that $f(x_1,\ldots,x_k)\leq c g(x_1,\ldots,x_k)$ for all $(x_1,\ldots,x_k)\in\RR^k$.

\subsection{General point processes and tessellations}\label{subsec:PPTess}

Let $(\XX,\cX)$ be a measurable space  supplied with a $\sigma$-finite measure $\mu$. By $\sfN(\XX)$ we denote the space of $\sigma$-finite counting measures on $\XX$. The $\sigma$-field $\cN(\XX)$ is defined as the smallest $\sigma$-field on $\sfN(\XX)$ such that the evaluation mappings $\xi\mapsto\xi(B)$, $B\in\cX$, $\xi\in\sfN(\XX)$, are measurable. A \textbf{point process} on $\XX$ is a measurable mapping with values in $\sfN(\XX)$ defined over some fixed probability space $(\Omega,\cF,\PP)$. By a \textbf{Poisson point process} $\eta$ on $\XX$ with intensity measure $\mu$ we understand a point process with the following two properties:
\begin{itemize}
    \item[(i)] for any $B\in\cX$ the random variable $\eta(B)$ is Poisson distributed with mean $\mu(B)$;
    \item[(ii)] for any $n\in\NN$ and pairwise disjoint sets $B_1,\ldots,B_n\in\cX$ the random variables $\eta(B_1),\ldots,\eta(B_n)$ are independent.
\end{itemize}
We refer to \cite[Chapter 3]{SW} for further background material on Poisson point processes.

Next, we introduce the notion of a general random tessellation in $\RR^{d-1}$ and give a brief overview of some basic properties, which are used throughout the paper. For a more detailed discussion we refer to \cite[Chapter 10]{SW}. A \textbf{tessellation} $\cM$ in $\RR^{d-1}$ is a locally finite countable system of compact, convex subsets of $\RR^{d-1}$ having interior points, covering the space and having disjoint interiors. The elements of a tessellation $\cM$ are called {\bf cells} and every cell is a convex polytope. Given a polytope $P\subset \RR^{d-1}$ denote by $\cF_k(P)$ the set of its $k$-dimensional faces and let $\cF(P):=\bigcup_{k=0}^{d-1}\cF_{k}(P)$ be the set of all faces of $P$. A tessellation $\cM$ is called {\bf face-to-face} if for all $P_1,P_2\in \cM$ we have
$$
P_1\cap P_2\in(\cF(P_1)\cap\cF(P_2))\cup\{\varnothing\}.
$$
A face-to-face tessellation in $\RR^{d-1}$ is called {\bf normal} if each $k$-dimensional face of the tessellation is contained in precisely $d-k$ cells, $k\in\{0,1,\ldots,d-2\}$. Further, we denote by $\MM$ the set of all face-to-face tessellations in $\RR^{d-1}$. By a {\bf random tessellation} in $\RR^{d-1}$ we understand a particle process $X$ in $\RR^{d-1}$ (in the usual sense of stochastic geometry, see \cite{SW}) satisfying $X\in\MM$ almost surely.

\subsection{Laguerre tessellations}\label{sec:Laguerre_tess}

One of the most well studied type of tessellations is the classical Voronoi tessellation, see, for example, \cite{OkabeEtAl,SKM,SW}. A Laguerre tessellation can be considered as a generalized (or weighted) version of a Voronoi tessellation and was intensively studied in \cite{LZ08, Ldoc, Sch93}. In this subsection we only briefly recall some facts about Laguerre tessellations. For more details we refer the reader to part I of this paper \cite[Section 3.2 - 3.4]{Part1}.

The construction of a Laguerre diagram is based on the notion of a power function. For $v,w \in \RR^{d-1}$ and $W\in\RR$ we define the power of $w$ with respect to the pair $(v,W)$ as
\[
\pow (w,(v,W)):=\|w-v\|^2+W,
\]
where $W$ is referred to as the weight of the point $v$. Let $X\subset\RR^{d-1}\times\RR$ be a countable set of weighted points in $\RR^{d-1}$ such that $\min_{(v,W)\in X}\pow(w,(v,W))$ exists for each $w\in\RR^{d-1}$. Then the \textbf{Laguerre cell} of $(v,W)\in X$ is defined as
\[
C((v,W),X):=\{w\in\RR^{d-1}\colon \pow(w,(v,W))\leq \pow(w,(v',W'))\text{ for all }(v',W')\in X\}.
\]
We emphasize that it is not necessarily the case that a Laguerre cell is non-empty or that it contains interior points. The collection of all Laguerre cells of $X$ with non-vanishing topological interior is called the \textbf{Laguerre diagram} of $X$ and we write
\[
\cL(X):=\{C((v,W),X)\colon (v,W)\in X,\; {\rm int}(C((v,W),X))\neq\varnothing\}.
\]

It should be mentioned that a Laguerre diagram is not necessarily a tessellation, since the latter property strongly depends on the geometric properties of the point set $X$. In part I of this paper we proved the following lemma, which gives a sufficient condition for a point process $\xi$ in $\RR^{d-1}\times E$, $E\subset \RR$ a Borel set, which ensures that $\cL(\xi)$ is in fact almost surely a random tessellation.

\begin{lemma}[Lemma 3.1 and Lemma 3.2 in \cite{Part1}]\label{lm:LTessel}
Let $\xi$ be a point process in $\RR^{d-1}\times E$, satisfying the following conditions.

\begin{itemize}
\item[(P1)] Almost surely we have
$$
\conv(v\colon (v,h)\in\xi)=\RR^{d-1}.
$$
\item[(P2)] For every $w\in\RR^{d-1}$ and every $t\in E$ there are almost surely only finitely many $(v,h)\in\xi$ satisfying
$$
\pow (w,(v,h))\leq t.
$$
\item[(P3)] With probability $1$ no $d+1$ points $(v_0,h_0),\ldots, (v_d,h_d)$ from $\xi$ lie on the same downward paraboloid of the form
    $$
    \{(v,h)\in \RR^{d-1}\times E:  \|v - w\|^2 + h = t\}
    $$
with  $(w,t)\in\RR^{d-1}\times E$.
\end{itemize}

Then  $\cL(\xi)$ is a random tessellation in $\RR^{d-1}$ and, moreover, $\cL(\xi)$ is normal with probability $1$.
\end{lemma}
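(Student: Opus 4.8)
The plan is to reduce everything to the classical correspondence between Laguerre diagrams and convex polyhedra, and then to use the three hypotheses to control this correspondence for the infinite configuration~$\xi$. I would first rewrite the power function affinely: since
\[
\pow(w,(v,h)) = \|w\|^2 - 2\langle w,v\rangle + \|v\|^2 + h,
\]
the term $\|w\|^2$ is common to all generators, so the Laguerre cell of $(v,h)$ is exactly the set of $w$ maximising the affine function $g_{(v,h)}(w) := 2\langle w,v\rangle - \|v\|^2 - h$. Hence the cells of $\cL(\xi)$ are the domains of linearity of the convex upper envelope $G(w)=\max_{(v,h)\in\xi} g_{(v,h)}(w)$, equivalently the projections to $\RR^{d-1}$ of the facets of the convex polyhedron $P:=\bigcap_{(v,h)\in\xi}\{(w,z)\in\RR^{d-1}\times\RR\colon z\ge g_{(v,h)}(w)\}$; this projection is dimension preserving, so a $k$-dimensional face of the diagram is the projection of a $k$-dimensional face of $P$ and is shared by exactly those cells whose facets contain it. A direct computation shows that $d+1$ generators lie on a common downward paraboloid $\|v-w_0\|^2+h=t_0$ if and only if the $d+1$ bounding hyperplanes $\{z=g_{(v,h)}(w)\}$ pass through the common point $(w_0,\|w_0\|^2-t_0)$; thus hypothesis (P3) says precisely that the bounding hyperplanes of $P$ are in general position, i.e.\ that $P$ is a \emph{simple} polyhedron.

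Next I would use (P2) for the two properties it is designed to deliver: attainment and local finiteness. Fixing $w$, hypothesis (P2) forces the power values $\{\pow(w,(v,h))\}$ to be bounded below and to have only finitely many entries beneath any level, so the minimum is attained; consequently every $w$ lies in some cell and the cells cover $\RR^{d-1}$. For local finiteness I would fix a compact window $K$, bound the minimal-power function $m(w)=\min_{(v,h)}\pow(w,(v,h))$ uniformly on $K$ by comparison with a single reference generator, and argue that only finitely many generators can be active on $K$; on $K$ the diagram then coincides with the finite Laguerre diagram of these generators, which reduces all local combinatorial and topological statements to the classical finite case.

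With this local reduction in place the remaining tessellation axioms follow. Hypothesis (P1) excludes unbounded cells: the cell of $(v,h)$ equals $\bigcap_{(v',h')}\{w\colon \langle v'-v,w\rangle\le c_{v',h'}\}$, whose recession cone is trivial exactly when the directions $\{v'-v\}$ positively span $\RR^{d-1}$, i.e.\ when $v\in\inter\conv(\{v'\colon (v',h')\in\xi\})$; since (P1) makes this convex hull all of $\RR^{d-1}$, every nonempty cell is bounded, and being an intersection of closed half-spaces it is compact and convex with interior points. Distinct cells have disjoint interiors because the coincidence locus of two affine functions $g_{(v,h)},g_{(v',h')}$ is a hyperplane, and the full-dimensional cells cover $\RR^{d-1}$ because $G$ is piecewise affine with locally finitely many pieces. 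The face-to-face property is inherited from the face lattice of $P$, two facets meeting in a common lower-dimensional face; and normality follows from (P3): simplicity of $P$ means every $k$-dimensional face of $P$ lies in exactly $d-k$ facets, so every $k$-dimensional face of $\cL(\xi)$ is incident to exactly $d-k$ cells for $k\in\{0,\dots,d-2\}$. Measurability of $\xi\mapsto\cL(\xi)$ as a particle process is routine from the explicit local description, giving $\cL(\xi)\in\MM$ almost surely.

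The main obstacle is the local-finiteness step. In the textbook treatment the generators form a finite set and the polyhedron $P$ and its facet structure are immediate, whereas here $\xi$ carries infinitely many points and one must show that $\cL(\xi)$ is nonetheless locally finite and locally agrees with the finite Laguerre diagram of the active generators. Converting the pointwise information in (P2) into such a uniform statement over compact windows — controlling, via a covering together with continuity of $m$, the generators that can be active on $K$ despite the non-uniform Lipschitz behaviour of $\pow(\cdot,(v,h))$ for large $\|v\|$ — is the genuinely delicate part; once it is settled, (P1) and (P3) feed into the finite convex-hull picture to yield compactness of cells and normality, respectively.
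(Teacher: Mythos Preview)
This lemma is not proved in the present paper: it is quoted from Part~I of the series, where it appears as Lemma~3.1 and Lemma~3.2 in \cite{Part1}, and no argument is reproduced here. So there is no proof in this paper to compare against. That said, your lifting strategy --- identifying the Laguerre cells with the projected facets of the epigraph $P=\{(w,z):z\ge G(w)\}$ of the convex function $G(w)=\sup_{(v,h)\in\xi}g_{(v,h)}(w)$ --- is the standard route (Aurenhammer, Schlottmann~\cite{Sch93}, Lautensack--Zuyev~\cite{LZ08}) and is almost certainly what is done in \cite{Part1}. Your translations of (P1) into triviality of the recession cone and of (P3) into simplicity of $P$ are both correct.

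On the step you single out as delicate, your diagnosis is right but the remedy you sketch (covering plus continuity of $m$) does not close the gap on its own, for exactly the reason you name: the functions $\pow(\cdot,(v,h))$ are not uniformly Lipschitz in $(v,h)$, so (P2) at the centres of a cover does not bound the set of generators active somewhere on $K$. The clean fix uses the convexity you have already set up. Since $G$ is a supremum of affine functions and is finite everywhere by (P2), it is convex and hence locally Lipschitz; equivalently, its subdifferential $\partial G$ is bounded on compacta. If $(v,h)$ is active at some $w\in K$ then $g_{(v,h)}$ supports $G$ at $w$, so $2v\in\partial G(w)$; thus the spatial coordinates of all generators active on $K$ lie in a fixed bounded set. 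With $\|v\|$ bounded, the relation $g_{(v,h)}(w)=G(w)$ for some $w\in K$ also bounds $h$, and then a \emph{single} application of (P2) at any one point of $K$ yields finiteness. After this, the rest of your plan (compactness from (P1), face-to-face and normality from the simple polyhedron $P$, measurability from the local description) goes through as written.
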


Let $\xi$ be a point process satisfying properties (P1) --- (P3) and let $\cL^*(\xi)$ denote the dual tessellation of $\cL(\xi)$. This tessellation arises from $\cL(\xi)$ by including for distinct points $x_1=(v_1,h_1),\ldots,x_d=(v_d,h_d)$ the simplex $\conv(v_1,\ldots,v_d)$ in $\cL^*(\xi)$ if and only if the Laguerre cells corresponding to $v_1,\ldots,v_d$ all have non-empty interior and share a common point. It follows from Lemma \ref{lm:LTessel} and \cite[Proposition 2]{Sch93} that $\cL^*(\xi)$ is a random simplicial tessellation and, moreover, $\cL^*(\xi)$ is a Laguerre tessellation of the random set
\begin{equation}\label{eq:ApexProcess}
\xi^*:=\left\{(z,-K_{z})\in\RR^{d-1}\times\RR\colon z\in \cF_0(\cL(\xi))\right\},
\end{equation}
where $\cF_0(\cL(\xi))$ is a set of vertices of the Laguerre tessellation $\cL(\xi)$ and for each $z\in\cF_0(\cL(\xi))$ the constant $K_z$ is such that $z\in \cF_0(C((v,h),\xi))$ if and only if $\pow(z, (v,h))=K_{z}$ and there is no $(v,h)\in\xi$ with $\pow(z, (v,h))<K_{z}$.

\section{Construction of $\beta$-, $\beta'$- and Gaussian Delaunay tessellations}\label{sec:Constructions}

\subsection{Paraboloid growth and paraboloid hull processes}\label{sec:ParabHullProc}

In this Section \ref{sec:ParabHullProc} we introduce a paraboloid growth process with overlaps (or, simply, paraboloid growth process) and a paraboloid hull process. They represent a powerful tool which we will use for constructing $\beta$-, $\beta'$- and Gaussian-Delaunay tessellations and investigating their properties. The paraboloid growth and hull processes were first introduced in \cite{SY08, CSY13} in order to study the asymptotic geometry of the Voronoi flower and the convex hull of Poisson point processes in the unit ball, respectively.

Let $\Pi_+$ and $\Pi_{-}$ be the upward and downward standard paraboloids
$$
\Pi_{\pm}:=\{(v,h)\in\RR^{d-1}\times\RR\colon h=\pm\|v\|^2\}.
$$
Let $\Pi_{\pm,x}$ be the translate of $\Pi_{\pm}$, shifted by a vector $x= (v,h)$, that is,
\[
\Pi_{\pm,x}:=\{(v',h')\in\RR^{d-1}\times\RR\colon h'=\pm\|v'-v\|^2+h\}.
\]
We denote the apex of the paraboloid $\Pi_{\pm,x}$ by $\apex\Pi_{\pm,x}:=x$. Further, given a set $A\subset \RR^d$ we put
\begin{align*}
A^{\downarrow}:&=\{(v,h')\in\RR^{d-1}\times\RR\colon (v,h) \in A \text{ for some } h\ge h'\},\\
A^{\uparrow}:&=\{(v,h')\in\RR^{d-1}\times\RR\colon (v,h) \in A \text{ for some } h\leq h'\}.
\end{align*}

Following the definition from \cite{CSY13} for a given locally finite point set $X\subset\RR^d$ we introduce the paraboloid growth model $\Psi(X)$:
$$
\Psi(X):=\bigcup\limits_{x\in X}\Pi^{\uparrow}_{+,x}.
$$
In particular, given a Poisson point process $\xi$, $\Psi(\xi)$ is called the \textbf{generalized paraboloid growth process with overlaps} or simply the  \textbf{paraboloid growth process}. In other words, $\Psi(\xi)$ is a Boolean model (germ-grain model) in $\RR^d$ with paraboloid grains $\Pi_{+}$, see \cite[Chapter 9]{SW}. The name \textit{generalized paraboloid growth process with overlaps} comes from the original interpretation of this construction \cite{SY08} as the graph of the growth of crystals, when each crystal is born at the spatial coordinate $v$ at corresponding time $h$, where $x:=(v,h)\in\xi$, and growths with the same speed in all directions. Unlike the traditional growth scheme, where crystals stop growing in the direction if they touch each other, we allow them to overlap in our model. It means that a new crystal can be born at coordinate $v$ at time $h$ and starts growing, even if at that moment the point $v$ has already been occupied by another crystal. This leads to the definition of an extreme crystal as a crystal which at some time is not fully  covered by other crystals or, more formally, a point $x\in\xi$ is called \textbf{extreme} in the paraboloid growth process $\Psi(\xi)$ if and only if its associated paraboloid is not fully covered by the paraboloids associated with other points of $\xi$, i.e., if
$$
\Pi^{\uparrow}_{+,x}\not\subset \bigcup\limits_{y\in \xi, y\neq x}\Pi^{\uparrow}_{+,y}.
$$
We denote by $\ext(\Psi(\xi))$ the set of all extreme points of the paraboloid growth process $\Psi(\xi)$.

Further, we introduce the paraboloid hull process, which can be regarded as a dual to the paraboloid growth process and was considered in \cite{CSY13}, see Figure \ref{fig:GrowthHullProcess} for an illustration. The paraboloid hull process is designed in order to exhibit properties analogous to those of convex polytopes, where paraboloids are playing a role of hyperplanes, spatial coordinates $v$ are playing a role of spherical coordinates and hight coordinates $h$ are playing a role of radial coordinates.

For any collection $x_1:=(v_1,h_1),\ldots,x_k:=(v_k,h_k)$ of $1\leq k\leq d$ points in $\RR^{d-1}\times\RR$ with affinely independent spatial coordinates $v_1,\ldots,v_k\in  \RR^{d-1}$, we define $\Pi(x_1,\ldots,x_k)$ to be the intersection of $\aff(v_1,\ldots,v_k)\times\RR$ with any translate of $\Pi_{-}$ containing all the points $x_1,\ldots,x_k$. It should be mentioned here that although such translates of $\Pi_{-}$ are not unique for $k<d$, their intersections with $\aff(v_1,\ldots,v_k)\times\RR$ all coincide so that $\Pi(x_1,\ldots,x_k)$ is uniquely determined. Further we define the parabolic face $\Pi[x_1,\ldots,x_k]$ as
\[
\Pi[x_1,\ldots,x_k]:=\Pi(x_1,\ldots,x_k)\cap \left(\conv(v_1,\ldots,v_k)\times\RR\right).
\]
Following \cite{CSY13} we say that a set $A\subset \RR^d$ is upwards paraboloid convex if for each $x_1=(v_1,h_2),x_2=(v_2,h_2)\in A$ with $v_1\neq v_2$ we have $\Pi[x_1,x_2]\subset A$ and if for each $x=(v,h)\in A$ we have $x^{\uparrow}:=\{x\}^\uparrow\subset A$.

Finally, given a locally finite point set $X\subset\RR^d$ we define its paraboloid hull $\Phi(X)$ to be the smallest upwards paraboloid convex set containing $X$. Given a Poisson point process $\xi$, we define the  \textbf{paraboloid hull process} as $\Phi(\xi)$. Following the arguments analogous to \cite[Lemma 3.1]{CSY13} one has that, with probability one,
\begin{equation}\label{eq_9.11_1}
\Phi(\xi)=\bigcup\limits_{(x_1,\ldots,x_d)\in\xi_{\neq}^d}\left(\Pi[x_1,\ldots, x_d]\right)^{\uparrow},
\end{equation}
where $\xi_{\neq}^d$ is the collection of all $d$-tuples of distinct points of $\xi$, see Figure \ref{fig:ParabolicProcess} for an illustration. We shell point out that with probability one any $d$ distinct points $(x_1,\ldots,x_d)\in\xi_{\neq}^d$ have affinely independent spatial coordinates and the right-hand side in \eqref{eq_9.11_1} is almost surely well defined.

\begin{figure}[t]
\centering
\includegraphics[width=0.8\columnwidth]{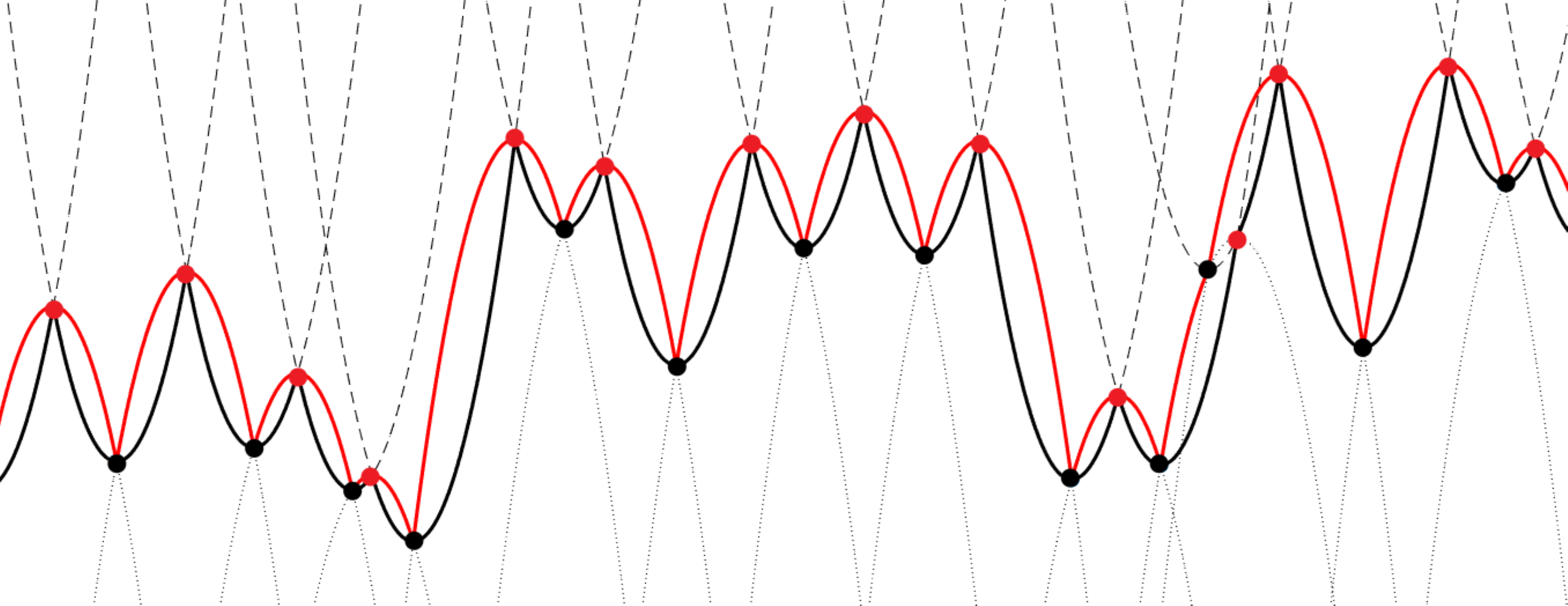}
\caption{Paraboloid growth process (black) with corresponding paraboloid hull process (red).}
\label{fig:GrowthHullProcess}
\end{figure}

For $(x_1,\ldots,x_d)\in\xi_{\neq}^d$ with affinely independent spatial coordinates $v_1,\ldots, v_d$ the set $\Pi[x_1,\ldots,x_d]$ is called a paraboloid sub-facet of $\Phi(\xi)$ if $\Pi[x_1,\ldots,x_d] \in\partial \Phi(\xi)$. Two different paraboloid sub-facets $\Pi[x_1,\ldots,x_d]$ and $\Pi[y_1,\ldots,y_d]$ are called co-paraboloid provided that $\Pi(x_1,\ldots,x_d)=\Pi(y_1,\ldots,y_d)$. Note, that although all points in $\{x_1,\ldots, x_d\}$ and $\{y_1,\ldots,y_d\}$ must be distinct according to the definition of a paraboloid sub-facet, the sets $\{x_1,\ldots,x_d\}$ and $\{y_1,\ldots,y_d\}$ themselves do not need to be disjoint. The definition of co-paraboloid sub-facets is equivalent to saying that all points $y_1,\ldots,y_d,x_1\ldots,x_d$ are lying on the same shift of standard downward paraboloid $\Pi_{-}$. By a \textbf{paraboloid facet (or $(d-1)$-dimensional paraboloid face)} of $\Phi(\xi)$ we understand the union of each maximal collection of co-paraboloid sub-facets. If $\xi$ is a Poisson point process satisfying (P3) each paraboloid facet of $\Phi(\xi)$ with probability one consists of exactly one sub-facet. We can thus say that with probability one $\Pi[x_1,\ldots,x_d]$ is a paraboloid facet of $\Phi(\xi)$ if and only if $\xi \cap \Pi(x_1,\ldots,x_d)^{\downarrow}=\{x_1,\ldots, x_d\}$. The points $x_1,\ldots, x_d$ are the \textbf{vertices} of the paraboloid facet $\Pi[x_1,\ldots,x_d]$. More formally, the vertex is a zero dimensional face of $\Phi(\xi)$, which arises as an intersection of suitable $d$-tuple of adjacent $(d-1)$-dimensional faces of  $\Phi(\xi)$. The collection of all vertices of $\Phi(\xi)$ is denoted by $\ver(\Phi(\xi))$ and it is easy to see that $\ver(\Phi(\xi))\subset \xi$.

The following equality builds a link between the paraboloid growth and paraboloid hull processes, see \cite[Equation (3.17)]{CSY13} and Figure \ref{fig:GrowthHullProcess}:
$$
\ver(\Phi(\xi))=\ext(\Psi(\xi)).
$$

Using the paraboloid growth and paraboloid hull processes we can construct random diagrams in $\RR^{d-1}$ which under suitable conditions on the underlying point process $\xi$ give rise to random tessellations. Consider first the paraboloid growth process $\Psi(\xi)$. Given a point $x=(v,h)\in\ext(\Psi(\xi))$ define the $\Psi$-cell of $x$ as
$$
C_{\Psi}(x,\xi):=\{w\in \RR^{d-1}\colon w^{\uparrow}\cap \partial \Psi(\xi) \in \Pi_{+,x}\}.
$$
In other words $w$ belongs to $C_{\Psi}(x,\xi)$ if and only if $\|w-v\|^2+h\leq \|w-v'\|+h'$ for any $(v',h')\in\xi$. Thus, the $\Psi$-cell of an extreme point $x$ of the paraboloid growth processes $\Psi(\xi)$ has non-empty interior and coincides with the Laguerre cell $C(x, \xi)$, which we defined in Section \ref{sec:Laguerre_tess}. Next, we construct the diagram $\cM_{\Psi}(\xi)$ as the collection of all $\Psi$-cells:
\begin{equation}\label{eq:DefTPsi}
\cM_{\Psi}(\xi):=\{C_{\Psi}(x,\xi)\colon x\in\ext(\Psi(\xi))\}.
\end{equation}
We directly have that $\cM_{\Psi}(\xi)\subset \cL(\xi)$. On the other hand, if the Laguerre cell $C(x, \xi)$ has non-empty interior then $x\in \ext(\Psi(\xi))$ and thus $\cM_{\Psi}(\xi)=\cL(\xi)$.

Now, consider  the paraboloid hull processes $\Phi(\xi)$ and construct a diagram $\cM_{\Phi}(\xi)$ in the following way: for any paraboloid facet $f$ of $\Phi(\xi)$ with vertices $x_1,\ldots, x_m\in\xi$ a polytope $\conv(v_1,\ldots,v_m)$ belongs to $\cM_{\Phi}(\xi)$. As already mentioned, if the the point process $\xi$ satisfies condition (P3), then with probability one any paraboloid facet of $\Phi(\xi)$ has exactly $d$ vertices and the construction above can be alternatively described as follows: for any collection $x_1=(v_1,h_1),\ldots,x_d=(v_d,h_d)$ of pairwise distinct points from $\xi$ we say that the simplex $\conv(v_1,\ldots,v_d)$ belongs to $\cM_{\Phi}(\xi)$ if and only if $\Pi[x_1,\ldots,x_d]$ is a paraboloid facet of $\Phi(\xi)$.

If the point process $\xi$ satisfies conditions (P1) --- (P3) of Lemma \ref{lm:LTessel} then $\cM_{\Phi}(\xi)$ and $\cM_{\Psi}(\xi)$ are a random tessellations in $\RR^{d-1}$ and $\cM_{\Phi}(\xi)=\cL^{*}(\xi)$ almost surely.

\begin{figure}[t]
\centering
\includegraphics[width=\columnwidth]{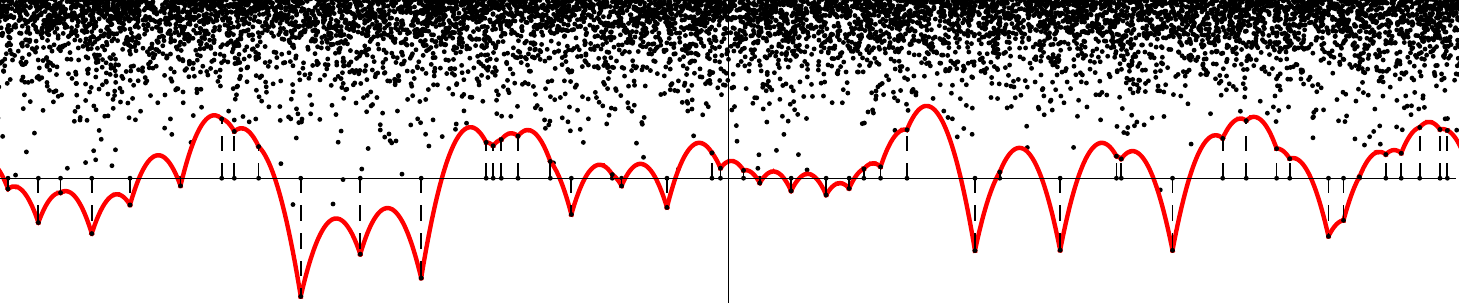}
\caption{Construction of the Gaussian-Delaunay tessellation for $d=2$. The figure shows the Poisson point process $\zeta$ introduced in Section \ref{subsec:GaussianDelaunay} and the corresponding paraboloid hull $\Phi(\zeta)$.}
\label{fig:ParabolicProcess}
\end{figure}

\subsection{$\beta$-Delaunay and $\beta^{'}$-Delaunay tessellations}\label{subsec:betaDelaunay}

The $\beta$- and $\beta'$-Delaunay tessellations have been introduced in part I \cite{Part1} of this paper, where two alternative definitions using the concept of Laguerre tessellations and the notion of paraboloid hull processes were given. The first approach is more suitable for verifying properties of the tessellation and defining the typical cell, while the second approach is used for investigating the probabilistic behaviour of the typical cell. For convenience of the reader and to make the paper more self-contained will briefly recall both definitions here.

As an underlying point process for the $\beta$-Delaunay tessellation we consider a Poisson point process $\eta_{\beta} = \eta_{\beta,\gamma}$ in $\RR^{d-1}\times \RR_{+}$ with $\beta >-1$ and intensity measure having density
\begin{equation}\label{eq:BetaPoissonIntensity}
(v,h)\mapsto \gamma\,c_{d,\beta}h^{\beta},\qquad c_{d,\beta}:={\Gamma\left({d\over 2}+\beta+1\right)\over \pi^{d\over 2}\Gamma(\beta+1)},\, \gamma > 0,
\end{equation}
with respect to the Lebesgue measure on $\RR^{d-1}\times \RR_{+}$. Analogously for the $\beta'$-Delaunay tessellation we consider a Poisson point process $\eta^{\prime}_{\beta}=\eta^{\prime}_{\beta,\gamma}$ in $\RR^{d-1}\times\RR_{-}^*$ with $\beta >(d+1)/2$ and intensity measure having density
\begin{equation}\label{eq:BetaPrimePoissonIntensity}
(v,h)\mapsto\gamma\,c'_{d,\beta}\,(-h)^{-\beta},\qquad c'_{d,\beta}:={\Gamma\left(\beta\right)\over \pi^{d\over 2}\Gamma(\beta-{d\over 2})},\, \gamma > 0,
\end{equation}
with respect to the Lebesgue measure on $\RR^{d-1}\times\RR_{-}^*$.

It is straightforward to see (compare with \cite[Lemma 3.3]{Part1}) that the point processes $\eta_{\beta}$ and $\eta^{\prime}_{\beta}$ satisfy conditions (P1) --- (P3) of Lemma \ref{lm:LTessel} and, thus, $\cL(\eta_\beta)$ and $\cL(\eta^{\prime}_\beta)$ are random tessellations in $\RR^{d-1}$, which are almost surely normal. The dual tessellations $\cL^*(\eta_\beta)$ and $\cL^*(\eta^{\prime}_\beta)$ are with probability one random simplicial tessellations in $\RR^{d-1}$. The random tessellation $\cD_\beta:=\cL^*(\eta_{\beta})$ is called the \textbf{$\beta$-Delaunay tessellation} in $\RR^{d-1}$ and the random tessellation $\cD^{\prime}_\beta:=\cL^*(\eta^{\prime}_{\beta})$ is called the \textbf{$\beta'$-Delaunay tessellation} in $\RR^{d-1}$.

Alternatively, consider first the paraboloid hull processes $\Phi(\eta_{\beta})$ and $\Phi(\eta^{\prime}_{\beta})$, and construct the diagrams $\cM_{\Phi}(\eta_{\beta})$ and $\cM_{\Phi}(\eta^{\prime}_{\beta})$ as described in Section \ref{sec:ParabHullProc}. Since the Poisson point processes $\eta_{\beta}$ and $\eta^{\prime}_{\beta}$  satisfy conditions (P1) --- (P3) of Lemma \ref{lm:LTessel}, the tessellations $\cM_{\Phi}(\eta_{\beta})$ and $\cM_{\Phi}(\eta^{\prime}_{\beta})$ coincide almost surely with $\beta$- and $\beta'$-Delaunay tessellations, respectively, as defined above.

\subsection{The Gaussian-Delaunay tessellation}\label{subsec:GaussianDelaunay}

\begin{figure}[t]
	\centering
	\begin{minipage}[b]{0.45\linewidth}
		\includegraphics[width=0.9\columnwidth]{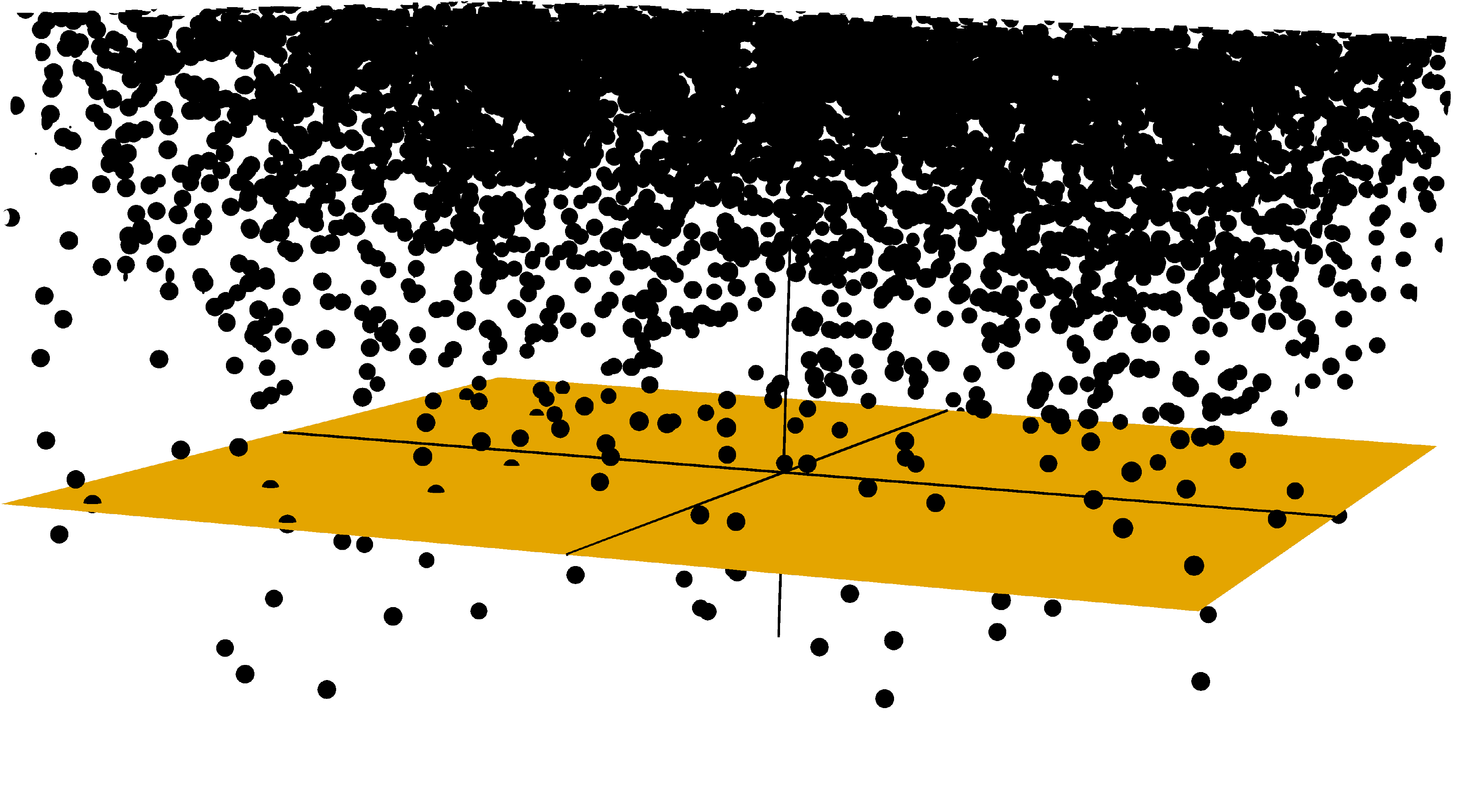}
		\caption{Poisson point processes in $\RR^d$ with $d=3$ used to construct the Gaussian-Delaunay tessellations in the plane. The plane $h=0$ in which the tessellation is constructed is shown in yellow.}
		\label{fig:d=3}
	\end{minipage}
	\quad
	\begin{minipage}[b]{0.45\linewidth}
	\includegraphics[width=0.9\columnwidth]{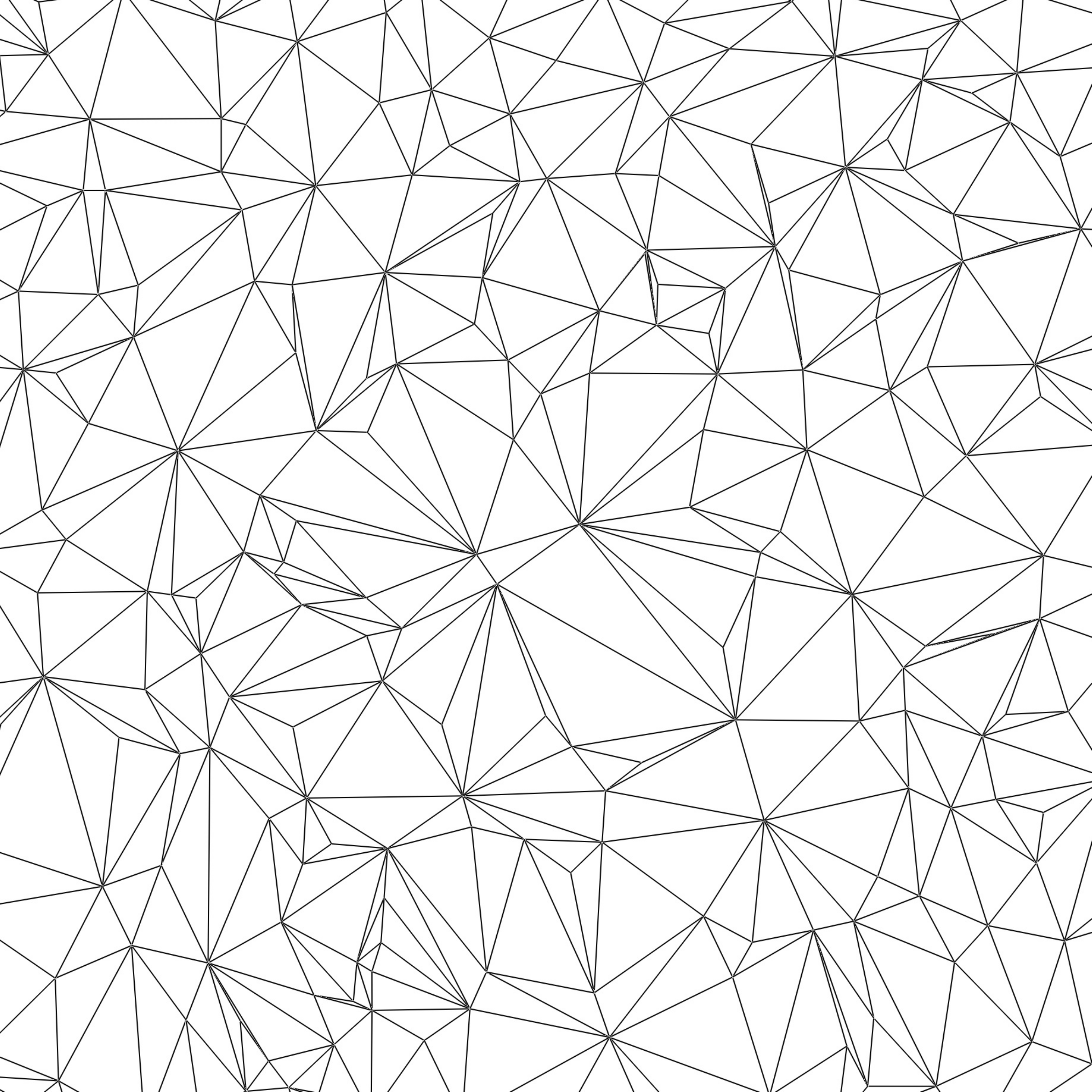}
	\caption{Realization of a Gaussian-Delaunay tessellation in $\RR^2$ illustrated in the window $[-13,13]^2$.}
	\label{fig:gaussian}
\end{minipage}
	
\end{figure}

Following the construction of $\beta$- and $\beta^{\prime}$-Delaunay tessellations let us now define what we mean by the Gaussian-Delaunay tessellation. It is based on a Poisson point process $\zeta$ in $\RR^{d-1}\times\RR$ whose intensity measure has density
$$
(v,h)\mapsto {1\over (2\pi)^{d/2}}\,e^{h/2},
$$
with respect to the Lebesgue measure on $\RR^{d-1}\times\RR$.

As in the previous subsection we will consider two alternative definitions of the resulting tessellation. First of all, we need to ensure that properties (P1) --- (P3) from Lemma \ref{lm:LTessel} hold for the point process $\zeta$. Property (P1) holds because the projection of the Poisson point process $\zeta$ to the space component $\RR^{d-1}$ is with probability one a dense subset of $\RR^{d-1}$.
Property (P3) also holds for any Poisson point process in $\RR^d$ whose intensity measure is absolutely continues with respect to the Lebesgue measure, see the proof of \cite[Lemma 3.3]{Part1}. In order to check (P2) it is enough to show that
$$
\EE \sum_{(v,h)\in\zeta}{\bf 1}(\|v-w\|^2+h\leq t)<\infty.
$$

Indeed, applying Mecke's formula for Poisson point processes \cite[Theorem 3.2.5]{SW} we obtain
\begin{align*}
\EE \sum_{(v,h)\in\zeta}{\bf 1}(\|v-w\|^2-h\leq t)&= {1\over (2\pi)^{d/2}}\int_{\RR^{d-1}}\int_{\RR}{\bf 1}(\|v-w\|^2+h\leq t)e^{h/2}\dd h\,\dd v\\
&= {1\over (2\pi)^{d/2}}\int_{\RR^{d-1}}\int_{-\infty}^{t-\|v\|^2}e^{h/2}\dd h\,\dd v\\
&= {2\over (2\pi)^{d/2}}e^{t/2}\int_{\RR^{d-1}}e^{-\|v\|^2/2}\,\dd v\\
&={2(d-1)\kappa_{d-1}\over (2\pi)^{d/2}}e^{t/2}\int_0^\infty e^{-u^2/2}\,u^{d-2}\,\dint u\\
& ={\sqrt{2}\over \sqrt{\pi}}e^{t/2}<\infty.
\end{align*}
Hence, the Laguerre diagram $\cL(\zeta)$ based on $\zeta$ is an almost surely normal random tessellation in $\RR^{d-1}$. The dual tessellations $\cD:=\cL^*(\zeta)$ is with probability one a simplicial random tessellation and is called the \textbf{Gaussian-Delaunay tessellation} in $\RR^{d-1}$.

\begin{figure}[!t]
\centering
\includegraphics[width=0.31\textwidth]{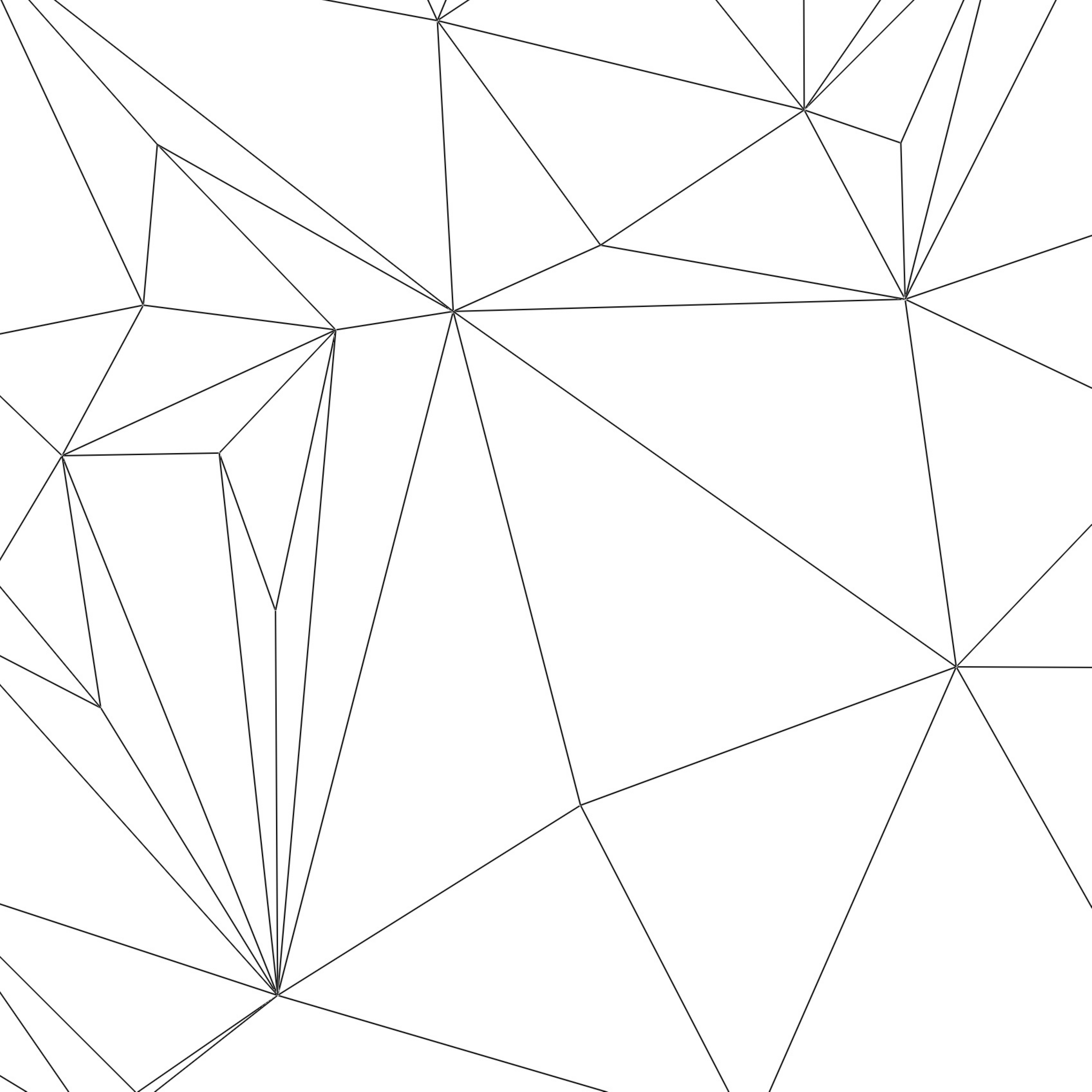}\hspace{0.01\textwidth}
\includegraphics[width=0.31\textwidth]{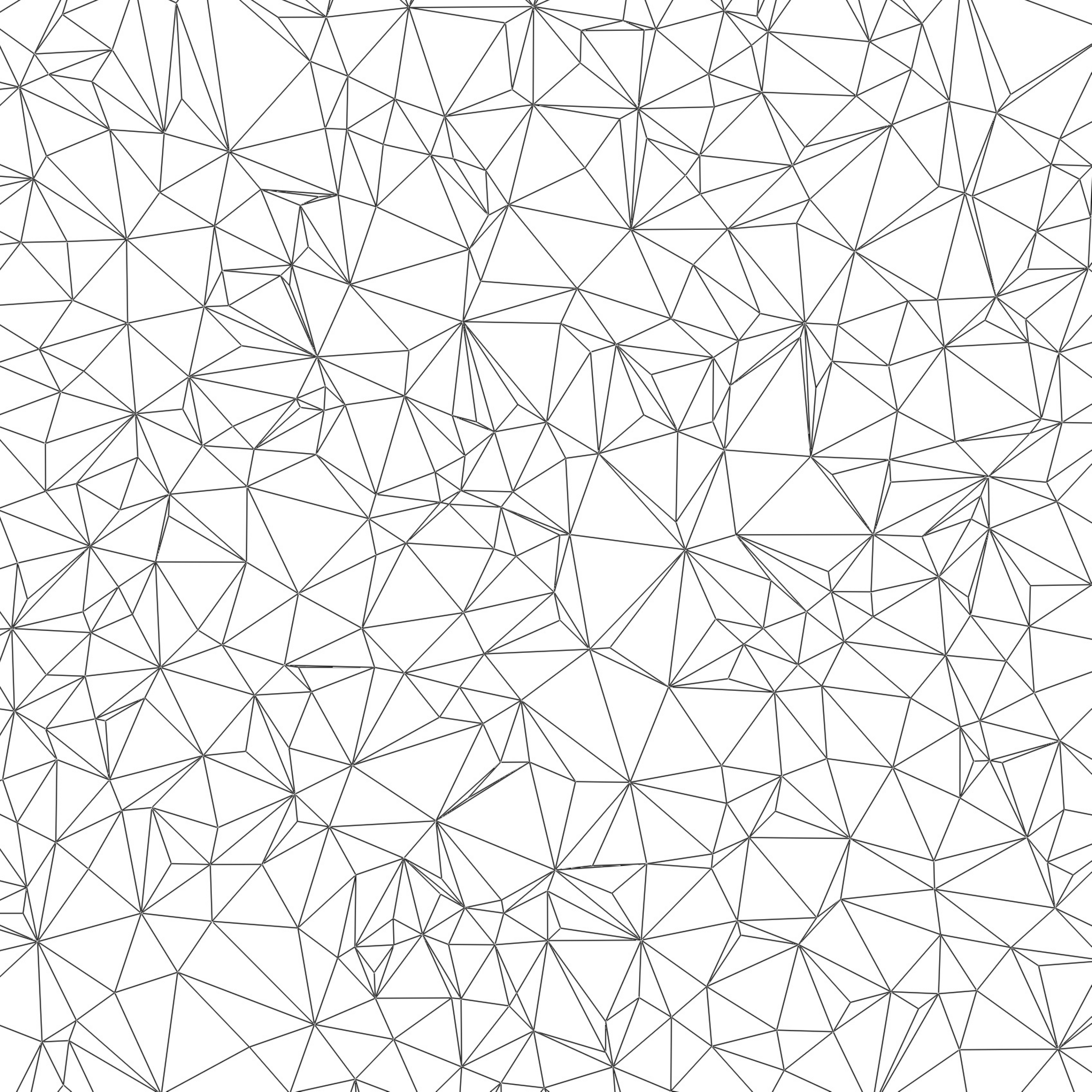}\hspace{0.01\textwidth}
\includegraphics[width=0.31\textwidth]{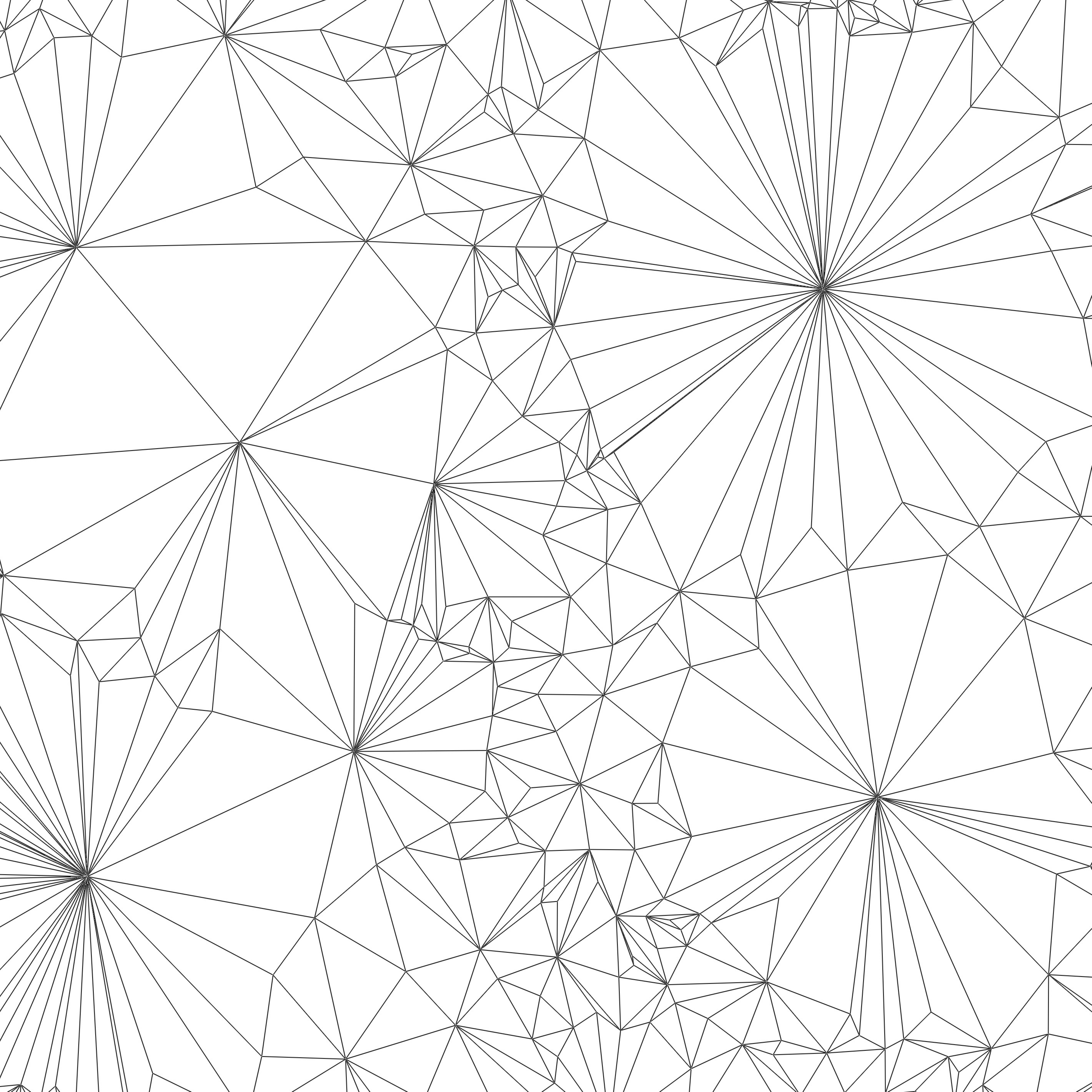}
\caption{Realizations of Delaunay tessellations in $\RR^2$ in the window $[-4,4]^2$. Left: The Gaussian-Delaunay tessellation. Middle: The $\beta$-Delaunay tessellation with $\beta = 10$. Right: The $\beta^{\prime}$-Delaunay tessellation with $\beta = 2.5$. }
\label{fig:tessellations}
\end{figure}

Alternatively, based on the paraboloid hull processes $\Phi(\zeta)$ one can construct the diagram $\cM_{\Phi}(\zeta)$ as described in Section \ref{sec:ParabHullProc} and, as before, conclude that with probability one $\cM_{\Phi}(\zeta)$ coincides with Gaussian-Delaunay tessellation $\cL^*(\zeta)=\cD$.

We emphasize at this point that the $\beta$-, $\beta'$- and the Gaussian-Delaunay tessellation in $\RR^{d-1}$ are in fact stationary random tessellations, meaning that their laws are invariant under shits in $\RR^{d-1}$. The reason for this is simply that the Poisson point processes $\eta_{\beta}$, $\eta_{\beta}'$ and $\zeta$ underlying the construction of these tessellations are stationary under shifts of their spatial components.

\section{Convergence to the Gaussian limiting tessellation}\label{sec:Convergence}

\subsection{Convergence of the point processes}\label{subsec:ConvergencePP}

Let us recall that the $\beta$-Delaunay and $\beta^{\prime}$-Delaunay  tessellations were build on the Poisson point processes $\eta_{\beta,\gamma}$ in $\RR^{d-1}\times \RR_{+}$ and $\eta^{\prime}_{\beta,\gamma}$ in $\RR^{d-1}\times \RR^*_{-}$, whose intensity measures have densities
\begin{align*}
\gamma c_{d,\beta}\,&h^{\beta},\qquad\qquad\, (v,h)\in\RR^{d-1}\times\RR_{+},\, \gamma >0,\\
\gamma c^{\prime}_{d,\beta}\,&(- h)^{-\beta},\qquad (v,h)\in\RR^{d-1}\times\RR_{-}^*,\, \gamma >0,
\end{align*}
with respect to the Lebesgue measure, respectively. For $\beta>0$ consider the transformations
\begin{align*}
Q_\beta &: \RR^{d-1}\times\RR_{+} \mapsto \RR^{d-1}\times\RR,(v,h)\mapsto \big((2\beta)^{1/2}\,v\,,2\beta(h-1)\big),\\
Q^{\prime}_\beta &: \RR^{d-1}\times\RR_{-}^* \mapsto \RR^{d-1}\times\RR,(v,h)\mapsto \big((2\beta)^{1/2}\,v\,,2\beta(h+1)\big).
\end{align*}
We are interested in the rescaled point processes
$$
\zeta_\beta:=Q_\beta(\eta_{\beta,\sqrt{2\beta}})\qquad\text{and}\qquad\zeta^{\prime}_\beta:=Q^{\prime}_\beta(\eta^{\prime}_{\beta,\sqrt{2\beta}}),
$$
where we choose the intensity $\gamma$ as $\sqrt{2\beta}$. For any fixed $C>0$ and sufficiently large $\beta$,  these can be considered as point processes on $\RR^{d-1} \times [-C,C]$. Unfortunately, it is not possible to consider them as point processes on $\RR^{d-1} \times \RR$ because the corresponding point configurations are not locally finite.

We will start by proving the following lemma, which shows that both $\zeta_\beta$ and $\zeta_\beta'$ converge in distribution (with respect to the topology of vague convergence on the space of locally finite counting measures on $\RR^{d-1}\times [-C,C]$) to the Gaussian limiting point process $\zeta$, as $\beta\to\infty$, which we defined in Section \ref{subsec:GaussianDelaunay}.  Formally, this means that for any finite collection of relatively compact Borel sets $B_1,\ldots,B_n\subset\RR^{d-1}\times\RR$ with $\zeta(\partial B_i)=0$ for $i\in\{1,\ldots,n\}$ one has that the $n$-dimensional random vector $(\zeta_{\beta}(B_1),\ldots,\zeta_{\beta}(B_n))$ converges in distribution to the $n$-dimensional random vector $(\zeta(B_1),\ldots,\zeta(B_n))$, as $\beta\to\infty$ (and similarly with $\zeta_{\beta}$ replaced by $\zeta_{\beta}^{'}$ when the $\beta'$-Delaunay tessellation is considered). We indicate this by writing $\zeta_\beta\overset{d}{\longrightarrow}\zeta$, as $\beta\to\infty$. We refer to \cite[Chapter 16]{Kallenberg} for background material on the convergence of point processes.

\begin{lemma}\label{lm:processconverge}
For every $C>0$
one has that $\zeta_\beta\overset{d}{\longrightarrow}\zeta$ and $\zeta^{\prime}_\beta\overset{d}{\longrightarrow}\zeta$, as $\beta\to\infty$, where all point processes are viewed as random elements with values in the space of locally finite counting measures on $\RR^{d-1}\times [-C,C]$.
\end{lemma}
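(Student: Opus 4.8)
The plan is to exploit the fact that $\zeta_\beta$, $\zeta_\beta^{\prime}$ and $\zeta$ are all Poisson point processes, for which weak convergence can be reduced to vague convergence of the associated intensity measures. First I would apply the mapping theorem for Poisson processes: since $Q_\beta$ is a diffeomorphism from $\RR^{d-1}\times\RR_+$ onto $\RR^{d-1}\times(-2\beta,\infty)$, the image $\zeta_\beta=Q_\beta(\eta_{\beta,\sqrt{2\beta}})$ is again Poisson, and its intensity has a Lebesgue density $g_\beta$ obtained from \eqref{eq:BetaPoissonIntensity} by the change of variables $v\mapsto(2\beta)^{1/2}v$, $h\mapsto2\beta(h-1)$. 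The constant Jacobian of $Q_\beta$ equals $(2\beta)^{(d+1)/2}$, so with the choice $\gamma=\sqrt{2\beta}$ one finds, for $(v,h)\in\RR^{d-1}\times(-2\beta,\infty)$,
$$
g_\beta(v,h)={c_{d,\beta}\over(2\beta)^{d/2}}\Big(1+{h\over2\beta}\Big)^{\beta}.
$$
The analogous computation based on \eqref{eq:BetaPrimePoissonIntensity} gives, for $\zeta_\beta^{\prime}$, the density $g_\beta^{\prime}(v,h)=c'_{d,\beta}(2\beta)^{-d/2}\,(1-h/(2\beta))^{-\beta}$ on $\RR^{d-1}\times(-\infty,2\beta)$.

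Next I would establish the pointwise limits of these densities. For fixed $(v,h)$ one has the elementary limits $(1+h/(2\beta))^{\beta}\to e^{h/2}$ and $(1-h/(2\beta))^{-\beta}\to e^{h/2}$ as $\beta\to\infty$. The normalizing constants are controlled by the classical Gamma-ratio asymptotics $\Gamma(z+a)/\Gamma(z)\sim z^{a}$ as $z\to\infty$: applied to $c_{d,\beta}=\Gamma(\tfrac d2+\beta+1)/(\pi^{d/2}\Gamma(\beta+1))$ and to $c'_{d,\beta}=\Gamma(\beta)/(\pi^{d/2}\Gamma(\beta-\tfrac d2))$, both yield $c_{d,\beta}\sim c'_{d,\beta}\sim\beta^{d/2}/\pi^{d/2}$, so that $c_{d,\beta}(2\beta)^{-d/2}\to(2\pi)^{-d/2}$ and likewise for the primed constant. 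Hence $g_\beta\to g$ and $g_\beta^{\prime}\to g$ pointwise, where $g(v,h)=(2\pi)^{-d/2}e^{h/2}$ is precisely the density of $\zeta$.

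To upgrade this to vague convergence of the intensity measures, I would fix a continuous function $f$ with compact support contained in $\RR^{d-1}\times[-C,C]$ and show $\int f\,g_\beta\to\int f\,g$ (and similarly with $g_\beta^{\prime}$). Since $f$ is supported on a bounded set and, for $h$ ranging in the compact interval $[-C,C]$ and $\beta$ large, the factors $(1\pm h/(2\beta))^{\pm\beta}$ are bounded uniformly (e.g.\ by a constant multiple of $e^{C/2}$) while $c_{d,\beta}(2\beta)^{-d/2}$ is bounded, the integrands $f\,g_\beta$ admit a fixed compactly supported dominating function; bounded convergence then gives the claim. Note also that the height domains $(-2\beta,\infty)$ and $(-\infty,2\beta)$ contain $[-C,C]$ for every $\beta>C/2$, so restricting to $\RR^{d-1}\times[-C,C]$ produces no boundary effects. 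This establishes the vague convergence of the intensity measures of $\zeta_\beta$ and $\zeta_\beta^{\prime}$ to that of $\zeta$.

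Finally, I would invoke the Poisson-specific convergence criterion: inserting the densities into the Laplace functional $L(f)=\exp\!\big(-\int(1-e^{-f})\,\dd\mu\big)$ and using that $1-e^{-f}$ is itself continuous and compactly supported, the vague convergence of intensities yields convergence of the Laplace functionals on all nonnegative $f\in C_c$, which is equivalent to convergence in distribution of the point processes with respect to the vague topology; see \cite[Chapter 16]{Kallenberg}. This gives $\zeta_\beta\overset{d}{\longrightarrow}\zeta$ and $\zeta_\beta^{\prime}\overset{d}{\longrightarrow}\zeta$, as desired. The only genuinely non-trivial step is the asymptotic control of the normalizing constants $c_{d,\beta}$ and $c'_{d,\beta}$ combined with the uniform domination needed to exchange limit and integral; the remainder is bookkeeping with the change of variables and an appeal to the standard criterion for Poisson processes.
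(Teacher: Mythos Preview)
Your argument is correct and follows essentially the same route as the paper: compute the intensity density of the image Poisson process via the change of variables, use the Gamma-ratio (equivalently Stirling) asymptotics for $c_{d,\beta}$ together with dominated convergence on the compact height window $[-C,C]$, and then invoke a Poisson-specific criterion to pass from convergence of intensities to weak convergence of the processes. The only cosmetic difference is in this last step: the paper tests the intensity measure on rectangles $A\times B$, notes that $\zeta_\beta(A\times B)\overset{d}{\to}\zeta(A\times B)$ because both are Poisson with converging means, and then cites \cite[Theorem~16.16(iv)]{Kallenberg}, whereas you go through vague convergence of the intensity measures and Laplace functionals; these are equivalent formulations of the same mechanism.
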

\begin{proof}
We will only prove the result for $\zeta_\beta$, the convergence for $\zeta^{\prime}_\beta$ can be shown in the same way. By the mapping property of Poisson point processes, $\zeta_\beta$ is a Poisson point process on $\RR^{d-1}\times [-C,C]$, if $\beta$ is sufficiently large,  with intensity measure satisfying
\begin{align*}
\EE\zeta_\beta(A\times B) &= {\sqrt{2\beta}\,c_{d,\beta}}\int_{\RR^{d-1}}\int_{\RR}{\bf 1}\big((2\beta)^{1/2}\,v\in A,2\beta(h-1)\in B, h\geq 0\big)\,h^{\beta}\,\dint h\dint v
\end{align*}
for all bounded Borel sets $A\subset\RR^{d-1}$ and $B\subset [-C,C]$. Substituting $w=\sqrt{2\beta}\,v$ and $s=2\beta(h-1)$ this transforms as follows:
\begin{align}
\EE\zeta_\beta(A\times B) &= {c_{d,\beta}\over(2\beta)^{d/2}}\int_{\RR^{d-1}}\int_{\RR}{\bf 1}\Big(w\in A,s\in B,1+{s\over 2\beta}\geq 0\Big)\,\Big(1+{s\over 2\beta}\Big)^{\beta}\,\dint s\dint w.\label{eq:28.01_2}
\end{align}
Since, by Stirling's formula, $c_{d,\beta}$ is asymptotically equivalent to $(\beta/\pi)^{d/2}$, as $\beta\to\infty$, we conclude from the dominated convergence theorem that
$$
\lim_{\beta\to\infty}\EE\zeta_\beta(A\times B)  = {1\over (2\pi)^{d/2}}\int_{\RR^{d-1}}\int_{\RR}{\bf 1}(w\in A,s\in B)\,e^{s/2}\,\dint s\dint w.
$$
This is equivalent to say that
$$
\lim_{\beta\to\infty}\EE[\zeta_\beta(A\times B)]=\EE[\zeta(A\times B)]
$$
for all relatively compact Borel sets $A\subset\RR^{d-1}$ and $B\subset [-C,C]$. However, since $\zeta_\beta(A\times B)$ and $\zeta(A\times B)$ are Poisson random variables, this readily implies the convergence in distribution $\zeta_\beta(A\times B)\overset{d}{\longrightarrow}\zeta(A\times B)$, as $\beta\to\infty$ (for example by considering the characteristic functions). By \cite[Theorem 16.16 (iv)]{Kallenberg} this in turn yields the distributional convergence of the point process $\zeta_\beta$ to $\zeta$, as $\beta\to\infty$.
\end{proof}

\subsection{Convergence of the tessellations}\label{subsec:ConvergenceTess}

Motivated by Lemma \ref{lm:processconverge}, it is natural to expect that a similar distributional convergence holds on the level of tessellations as well. However, it is not straightforward to introduce a suitable notion of weak convergence of random tessellations. One reason behind is that there is no natural topological structure on the space of tessellations and it is not clear whether the space of tessellations in $\RR^{d-1}$ is even Polish or not (see the discussion in \cite[Section 2.3]{HugThaeleSplitting}). A possible way out is to view a random tessellation as a random closed set in the common sense of stochastic geometry and to work with the weak convergence of random closed sets with respect to the so-called Fell topology on the space $\cC$, the space of all compact subsets of $\RR^{d-1}$, see \cite[Chapter 2]{SW} for a general introduction to the theory of random closed set. In our case this turns out to be possible since for any $\beta>-1$ (or $\beta>(d+1)/2$) the $\beta$- (or $\beta'$-)Delaunay tessellation and also their common limit, the Gaussian-Delaunay tessellation, are in fact tessellations of $\RR^{d-1}$.

To formalize this idea we need to introduce some notation. Given a stationary random tessellation $\cM$ in $\RR^{d-1}$ we denote by
$$
\sk{M}= \skel(\cM) := \bigcup_{c\in \cM}\partial c,
$$
the skeleton of $\cM$, see \cite[Definition 10.1.4]{SW}. By the discussion in \cite[p.\ 464]{SW} it follows that $\sk{M}$ is a stationary random closed set in $\RR^{d-1}$. We recall that the capacity functional $T_{Z}(C)$ of a general random closed set $Z\subset\RR^{d-1}$ is defined as
$$
T_{Z}(C)=\PP(Z\cap C \neq \varnothing),\qquad C\in\cC.
$$

Let us point out that the Poisson point processes $\zeta_{\beta}$, $\zeta^{\prime}_{\beta}$ and $\zeta$ satisfy properties (P1) --- (P3) stated in Lemma \ref{lm:LTessel}. This allows us to consider the random closed sets
$$
\widetilde{\sk{D}}_{\beta}=\skel(\widetilde{\cD}_{\beta}):=\skel(\cL^*(\zeta_{\beta})),\quad \widetilde{\sk{D}}^{\prime}_{\beta}=\skel(\widetilde{\cD}^{\prime}_{\beta}):=\skel(\cL^*(\zeta^{\prime}_{\beta})),\quad\sk{D}=\skel(\cD):=\skel(\cL^*(\zeta)),
$$
where we use the notation introduced in Sections \ref{sec:Laguerre_tess} and \ref{sec:ParabHullProc}. We also consider the capacity functionals
$$
T_{\beta}(C):=T_{\widetilde{\sk{D}}_{\beta}}(C),\quad T^{\prime}_{\beta}(C):=T_{\widetilde{\sk{D}}^{\prime}_{\beta}}(C)\quad\text{and}\quad T(C):=T_{\sk{D}}(C),\qquad C\in\cC.
$$

Our principal goal in this section is to prove the following theorem, which is the counterpart to Lemma \ref{lm:processconverge} on the level of tessellations. More precisely, we shall prove that, as $\beta\to\infty$, the random closed sets $\widetilde{\sk{D}}_{\beta}$ and $\widetilde{\sk{D}}^{\prime}_{\beta}$ converge weakly (with respect to the \textbf{Fell topology} on the space $\cC$) to the random closed set $\sk{D}$. Formally, this means that the capacity functionals $T_\beta$ and $T_\beta'$ both converge, as $\beta\to\infty$, to the capacity functional $T$ of the Gaussian-Delaunay tessellation. For background material on the convergence of random sets we refer to \cite[Chapter 16]{Kallenberg} or \cite[Chapter 2]{SW} as well as to the explanations we give after the statement of our result.

\begin{theorem}\label{tm:CobvergenceGaussian}
The random closed sets $\widetilde{\sk{D}}_{\beta}$ and $\widetilde{\sk{D}}^{\prime}_{\beta}$ converge weakly to the random closed set $\sk{D}$, as $\beta \to \infty$.
\end{theorem}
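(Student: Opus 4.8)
The plan is to prove convergence of the capacity functionals, i.e.\ that $T_\beta(C)\to T(C)$ and $T_\beta'(C)\to T(C)$ as $\beta\to\infty$ for every $C\in\cC$ (at least for $C$ in a sufficiently rich subclass, e.g.\ finite unions of closed balls, which by a standard argument suffices to identify the law of a random closed set via the Choquet theorem and Fell-topology weak convergence, see \cite[Chapter 2]{SW}). Since $\widetilde{\sk{D}}_\beta=\skel(\cL^*(\zeta_\beta))$ is built from the paraboloid hull process $\Phi(\zeta_\beta)$ via the identification $\cM_\Phi(\zeta_\beta)=\cL^*(\zeta_\beta)$ recorded in Section~\ref{sec:ParabHullProc}, the event $\{\widetilde{\sk{D}}_\beta\cap C\neq\varnothing\}$ is a measurable functional of the point configuration $\zeta_\beta$, namely it records whether some simplicial cell boundary of $\cM_\Phi(\zeta_\beta)$ meets $C$. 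The heart of the argument is therefore to show that this functional is, in an appropriate sense, \emph{continuous} and \emph{localized}: whether $C$ is hit depends only on the points of $\zeta_\beta$ lying in a (random but tight) bounded region of $\RR^{d-1}\times\RR$, and this dependence passes to the limit under the vague convergence $\zeta_\beta\overset{d}{\longrightarrow}\zeta$ from Lemma~\ref{lm:processconverge}.

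The key steps, in order, are as follows. First I would fix $C\in\cC$ and express the hitting event through the paraboloid hull process: the skeleton $\skel(\cM_\Phi(\xi))$ meets $C$ precisely when some spatial projection $\conv(v_1,\ldots,v_d)$ of a paraboloid facet $\Pi[x_1,\ldots,x_d]$ of $\Phi(\xi)$ has its boundary intersecting $C$, and by the facet characterization this is governed by the condition $\xi\cap\Pi(x_1,\ldots,x_d)^{\downarrow}=\{x_1,\ldots,x_d\}$. Second, and this is the crucial \textbf{localization} step, I would establish a stabilization estimate: the paraboloid facets whose spatial projections can possibly meet $C$ are determined by points of $\zeta_\beta$ whose apices lie within some random radius $R$ (in both the spatial and height coordinates) of $C$, and one must show that $R$ has tails decaying fast enough uniformly in large $\beta$. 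Concretely one bounds the probability that an extreme point (vertex of $\Phi$) relevant to $C$ has height coordinate exceeding $T$ or spatial coordinate outside $B_\rho$, using the growth-process description $\ver(\Phi(\xi))=\ext(\Psi(\xi))$ together with the fact that under the rescaling the intensity density $\bigl(1+\tfrac{s}{2\beta}\bigr)^\beta$ is dominated by an integrable envelope converging to $e^{s/2}$. Third, given localization, the hitting event becomes (up to an event of small probability) a \emph{continuous} functional of the restriction $\zeta_\beta\cap(B_\rho\times[-T,T])$, and since $\zeta(\partial(\cdot))=0$ for the relevant boundaries almost surely, the continuous-mapping principle together with Lemma~\ref{lm:processconverge} yields $T_\beta(C)\to T(C)$. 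The analogous argument with $\zeta_\beta'$ in place of $\zeta_\beta$ uses the envelope $\bigl(1-\tfrac{s}{2\beta}\bigr)^{-\beta}\to e^{s/2}$ and is otherwise identical.

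The main obstacle I anticipate is the \textbf{uniform-in-$\beta$ stabilization estimate} in the second step. Unlike the scalar convergence of intensities in Lemma~\ref{lm:processconverge}, here one needs quantitative geometric control: a single far-away point of $\zeta_\beta$ with very large height can in principle create or destroy a paraboloid facet whose projection reaches $C$, so one must rule out such long-range influence with tail bounds that do not degrade as $\beta\to\infty$. This requires adapting the stabilization/localization machinery for paraboloid hull processes from \cite{CSY13} to the rescaled intensities, controlling simultaneously the spatial spread of the relevant vertices and the depth $\Pi(x_1,\ldots,x_d)^{\downarrow}$ of the emptiness region that certifies a facet. A secondary technical point is verifying that the boundary sets whose $\zeta$-measure must vanish (so that the continuous-mapping argument applies) indeed have this property almost surely, which follows from property (P3) and the absolute continuity of the intensity measures, ensuring no degenerate paraboloid configurations. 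Once the localization radius is shown to have uniformly tight tails, the passage to the limit is a routine application of weak convergence restricted to a bounded window, and the stronger-than-Fell convergence alluded to in the introduction emerges as a by-product because the localized functional is in fact continuous for a finer topology.
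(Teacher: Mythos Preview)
Your proposal is correct and follows the same three-step architecture as the paper: uniform-in-$\beta$ height bounds for $\partial\Psi$, a stabilization/localization lemma showing that $\widetilde{\sk{D}}_\beta\cap B_R$ depends only on points of $\zeta_\beta$ in a bounded spatial window, and then passage to the limit on that window. The one substantive difference is your Step~3: where you invoke the continuous-mapping theorem for the vague convergence $\zeta_\beta\overset{d}{\to}\zeta$ of Lemma~\ref{lm:processconverge}, the paper instead couples $\zeta_\beta$ and $\zeta$ in total variation on the parabolic region $K(R+r,T)$ and shows directly that $\PP(\widetilde{\sk{D}}_\beta\cap B_R\neq\sk{D}\cap B_R)\to 0$. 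The coupling route is cleaner here for two reasons: it bypasses the verification that $\zeta$ is almost surely a continuity point of the hitting functional (which is doable but fiddly), and it immediately delivers the stronger-than-Fell convergence---exact coincidence on compacta with high probability---that you mention only as a by-product. One small correction to your localization window: the shape $B_\rho\times[-T,T]$ is not quite right, since the facet-emptiness condition $\xi\cap\Pi(x_1,\dots,x_d)^{\downarrow}=\{x_1,\dots,x_d\}$ involves points with arbitrarily negative height; the correct region is the parabolic set $K(R+r,T)$ extending to $h=-\infty$, which still has finite intensity because $e^{h/2}$ is integrable near $-\infty$, and that finiteness is precisely what makes the total-variation bound go through.
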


\begin{remark}
The theorem above can be reformulated for the $\beta$- and $\beta^{\prime}$-Delaunay tessellations as follows. For some positive $t$ define, analogously for $\beta^{\prime}$-Delaunay tessellation, the rescaled version of the $\beta$-Delaunay tessellation
$$
t\cD_{\beta}:=\{tc\colon c\in\cD_{\beta}\}.
$$
Then the random closed sets $\sqrt{2\beta}\sk{D}_{\beta}:=\skel(\sqrt{2\beta}\,\cD_{\beta})$ and $\sqrt{2\beta}\sk{D}^{\prime}_{\beta}:=\skel(\sqrt{2\beta}\,\cD^{\prime}_{\beta})$ converge weakly to the random closed set $\sk{D}$, as $\beta\to\infty$.
This follows from the fact that $\widetilde{\sk{D}}_{\beta}=\sqrt{2\beta}\sk{D}_{\beta}$ and $\widetilde{\sk{D}}^{\prime}_{\beta}=\sqrt{2\beta}\sk{D}^{\prime}_{\beta}$, and is due to the property of the transformation $Q_{\beta}$, which preserves the parabolic structure of the set. Namely the image of paraboloid $\Pi_{(v,h),\pm}$ with apex $(v,h)$ is the paraboloid $\Pi_{(v',h'),\pm}$ with apex $(v',h'):=(\sqrt{2\beta}v, 2\beta (h-1))$.
\end{remark}

The proof of Theorem \ref{tm:CobvergenceGaussian} is based on a careful analysis of the capacity functionals of $\widetilde{\sk{D}}_{\beta}$, $\widetilde{\sk{D}}^{\prime}_{\beta}$ and $\sk{D}$ as well as on the fact that weak convergence of random closed sets can be characterized in terms of capacity functionals. Namely, according to \cite[Theorem 1.7.7]{Mol17} a family of random closed sets $(Z_\lambda)_{\lambda>0}$ in $\RR^{d-1}$ converges weakly to a random closed set $Z$ in $\RR^{d-1}$ if and only if
\begin{equation}\label{eq:ConvergenceRACS}
\lim_{\lambda\to\infty}T_{Z_\lambda}(C) = T_{Z}(C)
\end{equation}
for all $C\in\cC$ such that $T_Z(\inter C)=T_{Z}(C)$. The proof of Theorem \ref{tm:CobvergenceGaussian} consists, roughly speaking, of three steps that can be summarized as follows:
\begin{itemize}
\item[] \textbf{Step 1 (Lemma \ref{lm:BetaBoundaryBounds} in Section \ref{sec:TechPrep}):} We recall that the constructions of $\widetilde{\sk{D}}_{\beta}$, $\widetilde{\sk{D}}^{\prime}_{\beta}$ and $\widetilde{\sk{D}}$ on $\RR^{d-1}$ are based on the paraboloid hull processes associated with the Poisson point processes $\zeta_{\beta}$, $\zeta_{\beta}'$ and $\zeta$ on $\RR^{d-1}\times\RR$, respectively. Our first step consists in bounding from above and below the heights of the paraboloids which with high probability determine the tessellation in a ball $B_R\subset\RR^{d-1}$ of some fixed radius $R>0$.
\item[] \textbf{Step 2 (Lemma \ref{lm:StabRadius} in Section \ref{sec:TechPrep}):}
In the second step we consider a kind of localization property in the spatial coordinate. More precisely, for any given $R>0$ we construct some $r>0$ such that the restricted random sets $\widetilde{\sk{D}}_{\beta}\cap B_R$, $\widetilde{\sk{D}}^{\prime}_{\beta}\cap B_R$ and $\widetilde{\sk{D}}\cap B_R$ are determined with high probability once the paraboloid hull processes underlying their construction are known in the cylinder $B_{R+r}\times\RR$.
\item[] \textbf{Step 3 (Section \ref{sec:ProofFinalStep}):} The last step consists of a coupling argument and is essentially based on the estimates we derived in Step 1 and Step 2. Using these results we obtain that with high probability the random sets $\widetilde{\sk{D}}_{\beta}\cap B_R$, $\widetilde{\sk{D}}^{\prime}_{\beta}\cap B_R$ and $\widetilde{\sk{D}}\cap B_R$ are determined by the restrictions of corresponding Poisson point processes to some particular region in $\RR^{d-1}\times\RR$. Thus, for any given $\varepsilon>0$ and $R>0$ we find a sufficiently large parameter $\beta$ and construct a probability space such that with probability $1-\varepsilon$ the random tessellations $\widetilde{\sk{D}}_{\beta}$ and $\widetilde{\sk{D}}^{\prime}_{\beta}$ restricted to $B_R$ \textit{coincide} with the Gaussian limiting tessellation $\widetilde{\sk{D}}$ within the same ball. In particular, this yields the convergence of the capacity functionals $T_\beta$ and $T_\beta'$ of $\widetilde{\sk{D}}_{\beta}$ and $\widetilde{\sk{D}}^{\prime}_{\beta}$, respectively, to the capacity functional $T$ of the Gaussian-Delaunay tessellation.
\end{itemize}
Although the Fell topology on the space of closed subsets of $\RR^{d-1}$ and the related weak convergence are widely used in the theory of random sets (see, for example, the monograph \cite{Mol17}), convergence in Fell topology does not imply convergence of many interesting quantities, such as the number of cells or faces within a given ball, the total surface area within a ball, and so on. However, the proof of Theorem \ref{tm:CobvergenceGaussian} we give and, which we summarized above, actually shows weak convergence of $\widetilde{\sk{D}}_{\beta}$ and $\widetilde{\sk{D}}^{\prime}_{\beta}$ to $\sk{D}$ with respect to a much stronger topology on the space of closed subsets of $\RR^{d-1}$. To introduce this {non-separable} topology, let $C\in \cC$ be a closed subset of $\RR^{d-1}$. The filter of neighbourhoods of this set is defined as follows. Take a radius $R>0$ and consider the set $\cO_R(C)$ consisting of all closed subsets $B\subset\RR^{d-1}$ such that the restrictions of $B$ and $C$ to the ball of radius $R$ around the origin are equal, that is, $B\cap B_R=C\cap B_R$. One easily checks that the system $\cO_R(C)$ in fact defines a filter of neighbourhoods of $C$. A subset of $\cC$ is called open if, together with each element $C\in \cC$ it contains a set of the form $\cO_R(C)$ for some $R>0$. This defines a (very strong) topology on $\cC$ for which the geometric functionals mentioned above are continuous. However, we decided to present Theorem \ref{tm:CobvergenceGaussian} for the Fell topology for simplicity and also since this is the much more classical notion used in the theory of random sets.

\subsection{Technical preparations}\label{sec:TechPrep}

The main idea of the proof of Theorem \ref{tm:CobvergenceGaussian} is to use \eqref{eq:ConvergenceRACS} and the connection between $\widetilde{\sk{D}}_{\beta}$ (and $\widetilde{\sk{D}}^{\prime}_{\beta}$, $\widetilde{\sk{D}}$) and the boundaries of the paraboloid growth and hull processes described in Section \ref{sec:ParabHullProc}. In this section we present a number of technical preparations for the proof of Theorem \ref{tm:CobvergenceGaussian}, which is deferred to the next section.

\begin{lemma}\label{lm:BetaBoundaryBounds}
Fix $A>0$ and let $T,t\in\RR$. The following assertions hold.
	\begin{enumerate}
	\item
	\begin{enumerate}
	\item For any $\beta \ge \beta_0\ge 1$ we have
	$$
	\PP\Big(\sup\limits_{\substack{(v,h)\in\partial \Psi(\zeta_{\beta}),\\v\in B_A}} h > T\Big)<\begin{cases}
	\exp\Big(-{A^{d-1}\over 2^{d/2}\sqrt{\pi}\Gamma({d+1\over 2})}\Big(1+{T-4A^2\over 2\beta_0}\Big)^{\beta_0}\Big) &:T>4A^2\\
	1 &: T\leq 4A^2;
	\end{cases}
	$$
	\item For any $\beta > (d+1)/2$ we have
    $$
	\PP\Big(\sup\limits_{\substack{(v,h)\in\partial \Psi(\zeta^{\prime}_{\beta}),\\v\in B_A}} h > T\Big)\leq
	\begin{cases}
	\exp\Big(-{A^{d-1}\over (2(d+1))^{d/2}\sqrt{\pi}\Gamma({d+1\over 2})}e^{T/2-2A^2}\Big) &: T< 4A^2+2\beta;\\
	0 &: T\ge 4A^2+2\beta;
	\end{cases}
	$$
	\item We have
	$$
	\PP\Big(\sup\limits_{\substack{(v,h)\in\partial \Psi(\zeta),\\v\in B_A}} h > T\Big)<\exp\Big(-{A^{d-1}\over 2^{d/2-1}\sqrt{\pi}\Gamma({d+1\over 2})}e^{T/2-2A^2}\Big).
	$$
	\end{enumerate}
	\item
	\begin{enumerate}
	\item For any $\beta >1 $ we have
	$$
	\PP\Big(\inf\limits_{\substack{(v,h)\in\partial \Psi(\zeta_{\beta}),\\v\in B_A}} h < t\Big)<1-\exp\Big(-{2(d/2+1)^{d/2}\over \sqrt{\pi}}(A+1)^{d-1}e^{t/2}\Big).
	$$
	\item For any $\beta \ge\beta_0> (d+1)/2$ we have 
	$$
	\hspace{-1.5cm}\PP\Big(\inf\limits_{\substack{(v,h)\in\partial \Psi(\zeta_{\beta}^{\prime}),\\v\in B_A}} h < t\Big)\leq\begin{cases}
1-\exp\Big(-{2(2\beta_0)^{\beta_0}(A+1)^{d-1}\over\sqrt{\pi}(2\beta_0-d-1)^{(d+1)/2}}\Big(2\beta_0-t\Big)^{-\beta_0+(d+1)/2}\Big) &: t<0\\
1 &: t\geq 0.
	\end{cases}
	$$
	\item We have
	$$
	\PP\Big(\inf\limits_{\substack{(v,h)\in\partial \Psi(\zeta),\\v\in B_A}} h < t\Big)<1-\exp\Big(-{2\over\sqrt{\pi}}(A+1)^{d-1}e^{t/2}\Big).
	$$
	\end{enumerate}
	\end{enumerate}
\end{lemma}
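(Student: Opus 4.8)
The plan is to reduce both families of estimates to void (respectively occupation) probabilities of explicit regions in $\RR^{d-1}\times\RR$ for the underlying Poisson point processes, and then to estimate the intensity measures of these regions by elementary means. The starting point is that the boundary of the paraboloid growth process is the lower envelope of the paraboloids attached to the points: for a locally finite configuration $\xi$ the height of $\partial\Psi(\xi)$ above a spatial point $v$ equals
$$
h_\xi(v):=\min_{(v',h')\in\xi}\big(\|v-v'\|^2+h'\big),
$$
so that $(v,h)\in\partial\Psi(\xi)$ with $v\in B_A$ precisely when $h=h_\xi(v)$. Consequently $\sup_{v\in B_A}h_\xi(v)>T$ iff some $v_0\in B_A$ carries an empty downward paraboloid $\{(v',h'):\|v_0-v'\|^2+h'\le T\}$, while $\inf_{v\in B_A}h_\xi(v)<t$ iff $\xi$ charges the union over $v\in B_A$ of the open downward paraboloids with apex height $t$.

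For the supremum bounds (assertion 1) the key geometric step converts the ``there exists an empty paraboloid'' event into the emptiness of a single fixed region. If $h_\xi(v_0)>T$ for some $v_0\in B_A$, then every $(v',h')\in\xi$ satisfies $\|v_0-v'\|^2+h'>T$; since $\|v_0-v'\|\le\|v'\|+A$ for $v_0\in B_A$, this forces $\xi$ to avoid
$$
R:=\big\{(v',h')\in\RR^{d-1}\times\RR:\ h'\le T-(\|v'\|+A)^2\big\},
$$
because $R$ is contained in each of the paraboloids indexed by $v_0\in B_A$. Hence $\PP(\sup_{v\in B_A}h_\xi(v)>T)\le\PP(\xi(R)=0)=e^{-\mu(R)}$, with $\mu$ the intensity measure of the relevant process, and it remains to bound $\mu(R)$ from below. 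I would restrict the spatial integration to $v'\in B_A$, so that $T-(\|v'\|+A)^2\ge T-4A^2$, and integrate the height density explicitly: for $\zeta$ this produces $e^{T/2-2A^2}$ times $\kappa_{d-1}A^{d-1}$ and reproduces the constant in 1(c); for $\zeta_\beta$ and $\zeta^{\prime}_\beta$ one inserts the densities $\tfrac{c_{d,\beta}}{(2\beta)^{d/2}}(1+\tfrac{s}{2\beta})^{\beta}$ and $\tfrac{c'_{d,\beta}}{(2\beta)^{d/2}}(1-\tfrac{s}{2\beta})^{-\beta}$ obtained as in the proof of Lemma \ref{lm:processconverge}, using $(1-x)^{-\beta}\ge e^{\beta x}$ and the monotonicity of $\beta\mapsto(1+\tfrac{c}{2\beta})^{\beta}$ to pass to the parameter $\beta_0$. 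The dichotomy in 1(b) reflects the support of $\zeta^{\prime}_\beta$ being the strip $s<2\beta$: once $T\ge 4A^2+2\beta$ the region $R$ contains the whole strip above $B_A$, and since $(1-\tfrac{s}{2\beta})^{-\beta}$ is non-integrable as $s\uparrow2\beta$ (here $\beta>1$), one gets $\mu(R)=\infty$ and the probability vanishes.

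For the infimum bounds (assertion 2) the reformulation is an exact identity rather than a containment: a point $(v',h')$ lies in some open paraboloid with apex $(v,t)$, $v\in B_A$, iff $(\max(0,\|v'\|-A))^2<t-h'$, so $\inf_{v\in B_A}h_\xi(v)<t$ holds iff $\xi$ charges
$$
U_t:=\big\{(v',h')\in\RR^{d-1}\times\RR:\ h'<t-(\max(0,\|v'\|-A))^2\big\}.
$$
Thus $\PP(\inf_{v\in B_A}h_\xi(v)<t)=1-e^{-\mu(U_t)}$, and it suffices to bound $\mu(U_t)$ from above. For $\zeta$ and $\zeta_\beta$, whose height densities have unbounded support, the height integration yields a clean factor $e^{t/2}$ (via $(1+\tfrac{s}{2\beta})^{\beta}\le e^{s/2}$ and the $\Gamma$-ratio bound on $\tfrac{c_{d,\beta}}{(2\beta)^{d/2}}$) times the footprint integral $\int_{\RR^{d-1}}e^{-(\max(0,\|v'\|-A))^2/2}\,\dd v'$; this integral is the source of the factor $(A+1)^{d-1}$, being bounded by $\kappa_{d-1}A^{d-1}$ plus a Gaussian tail from $\|v'\|>A$, both dominated by a constant multiple of $(A+1)^{d-1}$. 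For $\zeta^{\prime}_\beta$ the bounded support precludes this, so I would integrate $(1-\tfrac{s}{2\beta})^{-\beta}$ exactly over the height, producing powers of $2\beta-t$ and hence the form $(2\beta_0-t)^{-\beta_0+(d+1)/2}$ after the spatial integration (the exponent $-\beta_0+(d+1)/2$ combining the height power $-\beta_0+1$ with the $(d-1)/2$ from integrating over $v'$), the restriction $t<0$ ensuring $t-(\max(0,\|v'\|-A))^2<2\beta$; for $t\ge0$ the estimate degenerates and one simply uses $\PP\le1$.

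The bookkeeping of explicit constants is routine; the genuinely delicate points are (i) the geometric containment $R\subset\bigcap_{v_0\in B_A}\{\|v_0-\cdot\|^2+\cdot\le T\}$ underlying assertion 1, which collapses a spatial supremum into a single void probability, and (ii) the uniformity in $\beta\ge\beta_0$. I expect (ii) to be the main obstacle: one must bound the prefactors $c_{d,\beta}$, $c'_{d,\beta}$ and the height integrals \emph{monotonically} in $\beta$, so that the threshold $\beta_0$ rather than $\beta$ itself appears in the exponents. This uniformity is exactly what is needed in Step~1 of the proof of Theorem \ref{tm:CobvergenceGaussian}, where these tail estimates must hold simultaneously for all sufficiently large $\beta$.
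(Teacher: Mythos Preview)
Your proposal is correct and follows essentially the same approach as the paper. The paper phrases the geometric reduction for assertion~1 slightly differently (via the condition ``$B_A\cap B_{\sqrt{T-h'}}(v')\neq B_A$'' rather than your region $R$), but both arguments land on the void probability of $B_A\times(-\infty,T-4A^2]$; for assertion~2 your region $U_t$ coincides with the paper's $K(A,t)$, and the paper evaluates its intensity by Fubini in the other order (height first, then the ball of radius $\sqrt{t-s}+A$), while you integrate spatially via the footprint $e^{-(\max(0,\|v'\|-A))^2/2}$ --- equivalent computations leading to the same constants.
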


\begin{figure}[t]
\centering
\begin{tikzpicture}
\begin{axis}[enlargelimits=0.1,axis lines=none]
\addplot[domain=-5:-3.5,thick] {(x+4)^2+3};
\addplot[domain=-3.5:-0.5,thick] {(x+2)^2+1};
\addplot[domain=-0.5:1,thick] {(x-0.5)^2+2.25};
\addplot[domain=1:3.5,thick] {(x-2)^2+1.5};
\addplot[domain=3.5:5,thick] {(x-4)^2+3.5};
\addplot [dashed]coordinates {(-5.5,1.7) (5.5,1.7)};
\end{axis}
\fill (2.39,0.48) circle (0.075);
\draw[dashed] (-1,0) -- (8,0);
\draw[thick,<->] (1,0) -- (5.9,0);
\node[below] at (5.4,0) {$B_A(0)$};
\fill (3.45,0) circle (0.075);
\node[below] at (3.45,0) {$0$};
\draw[dotted] (2.39,0.48) -- (2.39,0);
\node[below] at (2.39,0) {\textcolor{blue}{$v'$}};
\fill (2.39,0.1) circle (0.075);
\draw[thick,<->,blue] (1.95,0.1) -- (2.83,0.1);
\draw[dotted] (1.95,0.1) -- (1.95,1.5);
\draw[dotted] (2.83,0.1) -- (2.83,1.5);
\node[above] at (1,0) {\textcolor{blue}{$B_{\sqrt{T-h'}}(v')$}};
\node[below] at (0.3,1.8) {$T$};
\node[below] at (2.65,0.65) {\textcolor{blue}{$h'$}};
\node[below] at (5.7,3.8) {$\Psi(\xi)$};
\node[below] at (8,0) {$\RR^{d-1}$};
\end{tikzpicture}
\caption{Illustration of the first argument in the proof of Lemma \ref{lm:BetaBoundaryBounds}.}
\label{fig:Proof1}
\end{figure}
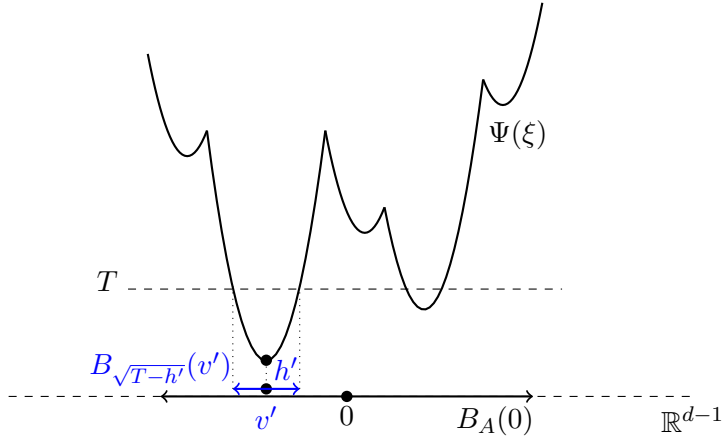

\begin{proof}
In what follows, let us write $\xi$ for one of the point processes $\zeta$, $\zeta_{\beta}$ or $\zeta^{\prime}_{\beta}$. We start by proving the first claim and we observe that by the definition
$$
\partial \Psi(\xi)=\big\{(v,h)\in\RR^{d-1}\times\RR\colon h=\inf\limits_{x\in\xi,(v,t)\in\Pi_{+,x}}t\big\}.
$$
This allows us to write
	\begin{align*}
	\PP\Big(\sup\limits_{\substack{(v,h)\in\partial \Psi(\xi),v\in B_A}} h > T\Big)&\leq \PP\Big(\inf_{x\in\xi}\;\sup\limits_{\substack{(v,h)\in \Pi_{+,x},v\in B_A}} h > T \Big)\\
	&\leq \PP\Big(\inf_{x=(v',h')\in\xi,v'\in B_A}\;\sup\limits_{\substack{(v,h)\in \Pi_{+,x},v\in B_A}} h > T\Big),
	\end{align*}
	where we recall that for a particle $x:=(v',h')\in\xi$ we put $\Pi_{+,x}=\{(v,h)\colon \|v-v'\|^2+h'=h\}$.
	Next, we note that the condition
	$$
	\sup\limits_{\substack{(v,h)\in \Pi_{+,x},v\in B_A}} h > T
	$$
	for $x=(v',h')\in\xi$ with $v'\in B_A$
	can be rephrased by saying that
	$$
	B_A\cap B_{\sqrt{T-h'}}(v')\neq B_A,
	$$
	see Figure \ref{fig:Proof1}. This cannot hold for points $(v',h')$ with $h'\leq T-4A^2$ and $v'\in B_A$, hence, we conclude that
	\begin{equation}\label{eq_01}
	\begin{aligned}
	\PP\Big(\inf_{x=(v',h')\in\xi,v'\in B_A}\;\sup\limits_{\substack{(v,h)\in \Pi_{+,x},v\in B_A}} h > T\Big)&\leq \PP\Big(\xi\cap (B_A\times (-\infty, T-4A^2])=\varnothing\Big)\\
	&=\exp(-\EE[\xi\cap (B_A\times (-\infty, T-4A^2])]),
	\end{aligned}
	\end{equation}
	since $\xi$ is a Poisson point process.
	
	Next, we evaluate the last term for $\xi=\zeta$, $\xi=\zeta_{\beta}$ and $\xi=\zeta^{\prime}_{\beta}$. For $\xi=\zeta$ we have
		\begin{align*}
	 \EE[\zeta&\cap (B_A\times (-\infty, T-4A^2])]= {1\over (2\pi)^{d/2}}\int_{B_A}\int_{-\infty}^{T-4A^2}e^{s/2}\,\dint s\dint w= {2A^{d-1}\over 2^{d/2}\sqrt{\pi}\Gamma({d+1\over 2})}e^{T/2-2A^2},
	\end{align*}
	which together with \eqref{eq_01} finishes the proof in this case.
	Analogously, from the definition of the intensity measure of the Poisson point processes $\zeta_{\beta}$ for $T>4A^2-2\beta$ we get
	\begin{align*}
	 \EE[\zeta_{\beta}\cap (B_A\times (-\infty, T-4A^2])]&= {c_{d,\beta}\over (2\beta)^{d/2}}\,\int_{B_A}\int_{-\infty}^{T-4A^2}{\bf 1}(s\geq -2\beta)\,\Big(1+{s\over 2\beta}\Big)^{\beta}\,\dint s\dint w\\
	 &= {c_{d,\beta}\over (2\beta)^{d/2}}\,{\pi^{d-1\over 2}\over \Gamma({d+1\over 2})}A^{d-1}\int_{-2\beta}^{T-4A^2}\Big(1+{s\over 2\beta}\Big)^{\beta}\,\dint s\\
	 &= {c_{d,\beta}\over (2\beta)^{d/2}}\,{2\beta\over \beta +1}{\pi^{d-1\over 2}\over \Gamma({d+1\over 2})}A^{d-1}\Big(1+{T-4A^2\over 2\beta}\Big)^{\beta+1}.
	\end{align*}
	Applying inequality in \cite[Paragraph 5.6.8]{NIST} for quotients of gamma functions we conclude that
	\begin{equation}\label{eq_08}
	{c_{d,\beta}\over \beta^{d/2}}={\Gamma\left({d\over 2}+\beta+1\right)\over (\pi\beta)^{d/2}\Gamma(\beta+1)} \geq {\beta^{d/2}\over (\pi\beta)^{d/2}} = {1\over \pi^{d/2}}.
	\end{equation}
	Moreover since the function $x\mapsto(1+x)^{1/x}$ is decreasing for $x>-1$ we get for $\beta \ge \beta_0\ge 1$ and $T\ge 4A^2$ that
	\begin{align*}
	 \Big(1+{T-4A^2\over 2\beta}\Big)^{\beta+1}&=\Big(1+{T-4A^2\over 2\beta}\Big)^{{2\beta\over T-4A^2}\cdot {T-4A^2\over 2}}\Big(1+{T-4A^2\over 2\beta}\Big)\ge \Big(1+{T-4A^2\over 2\beta_0}\Big)^{\beta_0}.
	\end{align*}	
	Finally, by definition of the intensity measure of the Poisson point processes $\zeta_{\beta}^{\prime}$ and the fact that $1-x < e^{-x}$ for $x<1$ we obtain for $T<4A^2+2\beta$,
	\begin{align*}
	 \EE[\zeta^{\prime}_{\beta}\cap (B_A\times (-\infty, T-4A^2])]&= {c^{\prime}_{d,\beta}\over (2\beta)^{d/2}}\,\int_{B_A}\int_{-\infty}^{T-4A^2}\Big(1-{s\over 2\beta}\Big)^{-\beta}\,\dint s\dint w\\
	 &> {c^{\prime}_{d,\beta}\over (2\beta)^{d/2}}\,\int_{B_A}\int_{-\infty}^{T-4A^2}e^{s/2}\,\dint s\dint w\\
	 &={c^{\prime}_{d,\beta}\over (2\beta)^{d/2}}{\pi^{d-1\over 2}\over \Gamma({d+1\over 2})}A^{d-1}e^{T/2-2A^2}.
	\end{align*}
	As before, applying the inequality in \cite[Paragraph 5.6.8]{NIST} for quotients of gamma functions we get
$$
	 \EE[\zeta^{\prime}_{\beta}\cap (B_A\times (-\infty, T-4A^2])]>{A^{d-1}\over (2(d+1))^{d/2}\sqrt{\pi}\Gamma({d+1\over 2})}e^{T/2-2A^2}.
$$
For $T\ge 4A^2+2\beta$ we have $\EE[\zeta^{\prime}_{\beta}\cap (B_A\times (-\infty, T-4A^2])]=\infty$. Substituting these estimates into \eqref{eq_01} we finish the proof of the first claim.

\begin{figure}[t]
	\centering
	\begin{tikzpicture}
	\begin{axis}[enlargelimits=0.1,axis lines=none]
	\addplot[domain=-10:-4,name path=f1,thick] {-0.25*(x+4)^2+3};
	\addplot[domain=-4:-1,dashed,,name path=f2] {-0.25*(x+4)^2+3};
	\addplot[domain=4:10,name path=g1,thick] {-0.25*(x-4)^2+3};
	\addplot[domain=1:4,dashed,,name path=g2] {-0.25*(x-4)^2+3};
	\addplot[domain=-10:-4,name path=h1,white] {-6};
	\addplot[domain=-4:4,name path=h2,white] {-6};
	\addplot[domain=4:10,name path=h5,white] {-6};
	\addplot[color=black,name path=k,thick] coordinates {(-4,3) (4,3)};
	
	\tikzfillbetween[of=f1 and h1]{gray, opacity=0.25,pattern=north east lines};
	\tikzfillbetween[of=g1 and h5]{gray, opacity=0.25,pattern=north east lines};
	\tikzfillbetween[of=k and h2]{gray, opacity=0.25,pattern=north east lines};
	
	\addplot[color=black,dashed] coordinates {(-10,3) (10,3)};
	\addplot[color=black,dashed] coordinates {(-10,0) (10,0)};
	
	\end{axis}
	\node[above] at (0.2,5) {$t$};
	\node[above] at (0,3.4) {$\RR^{d-1}$};
	\node[below] at (3.5,3.64) {$0$};
	\fill (3.5,3.64) circle (0.075);
	\draw[thick,<->] (2.3,3.64) -- (4.7,3.64);
	\draw[dotted,thick] (2.3,3.64) -- (2.3,5.2);
	\draw[dotted,thick] (4.7,3.64) -- (4.7,5.2);
	\node[below] at (3.5,1.64) {$K(A,t)$};
	\node[below] at (4.7,3.64) {$B_A(0)$};
	\end{tikzpicture}
	\includegraphics[width=0.4\columnwidth]{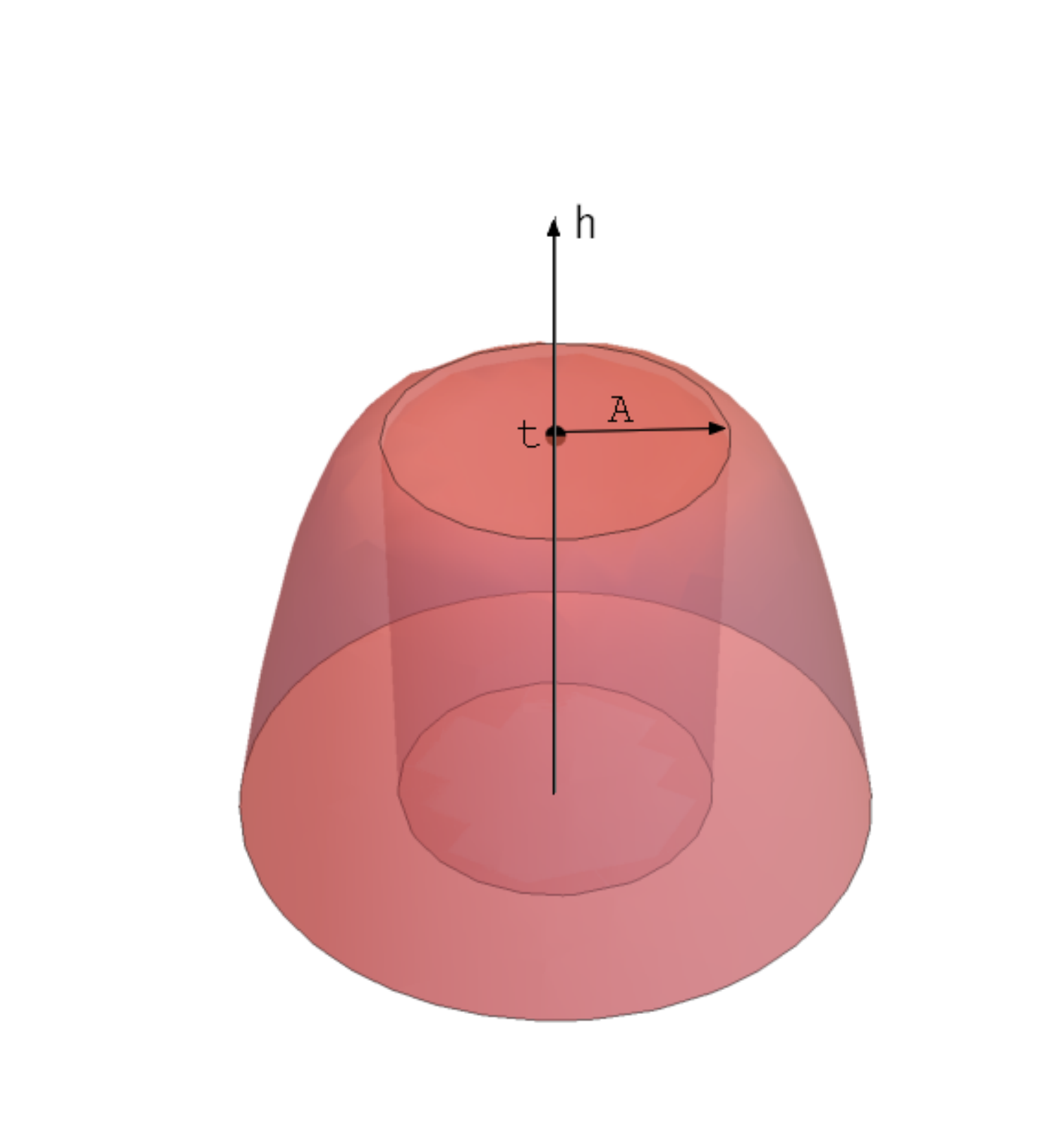}
	\caption{The region $K(A,t)$ in $\RR^2$ (left) and $\RR^3$ (right).}
	\label{fig:Proof2}
\end{figure}

In order to prove the second claim we argue similarly and start by writing
\begin{equation}\label{eq_03}
	\begin{aligned}
	\PP\Big(\inf\limits_{\substack{(v,h)\in\partial \Psi(\xi),v\in B_A}} h < t\Big)&= \PP\Big(\inf_{x\in\xi}\;\inf\limits_{\substack{(v,h)\in \Pi_{+,x},v\in B_A}} h <t \Big)\\
	&=1- \PP\Big(\inf_{x\in\xi}\;\inf\limits_{\substack{(v,h)\in \Pi_{+,x},v\in B_A}} h > t\Big)\\
	&=1- \PP\Big(\inf\limits_{\substack{(v,h)\in \Pi_{+,x},v\in B_A}} h > t \text{ for all } x\in \xi\Big).
	\end{aligned}
	\end{equation}
	As before, given a particle $x:=(v',h')\in\xi$ the condition that $\inf\limits_{\substack{(v,h)\in \Pi_{+,x},v\in B_A}} h > t$ for $x=(v',h')\in\xi$
	is equivalent to $h'>t$  if $ v'\in B_{A}$ and to $h'+\Big\|v'-A{v'\over \|v'\|}\Big\|^2 > t$ if $v'\not\in B_{A}$.
Consider the region
$$
K(A,t):=(B_A\times (-\infty,t])\cup \Big(\bigcup\limits_{w\in\partial B_A}\Pi_{-,(w,t)}^{\downarrow}\Big),
$$
which in $\RR^2$ and $\RR^3$ is illustrated in Figure \ref{fig:Proof2}. Then
\begin{equation}\label{eq_02}
\PP\Big(\inf\limits_{\substack{(v,h)\in \Pi_{+,x},v\in B_A}} h > t \text{ for all } x\in \xi\Big)= \PP\big(\xi\cap K(A,t)=\varnothing\big)=\exp(-\EE[\xi\cap K(A,t)])
\end{equation}
and it remains to determine $\EE[\xi\cap R(A,t)]$ for $\xi=\zeta$, $\xi=\zeta_{\beta}$ and $\xi^{\prime}=\zeta_{\beta}$. We start by analysing the second case, where Fubini's theorem implies that
	\begin{align*}
	 \EE[\zeta_{\beta}\cap K(A,t)]&= {c_{d,\beta}\over (2\beta)^{d/2}}\int_{-\infty}^{t}\int_{\{w\in\RR^{d-1}:\|w\|\leq \sqrt{t-s}+A\}}{\bf 1}(s\geq -2\beta)\Big(1+{s\over 2\beta}\Big)^{\beta}\,\dint w\dint s\\
	 &={c_{d,\beta}\pi^{d-1\over 2}\over (2\beta)^{d/2}\Gamma({d+1\over 2})}\int_{-\infty}^{t}{\bf 1}(s\geq -2\beta)(\sqrt{t-s}+A)^{d-1}\Big(1+{s\over 2\beta}\Big)^{\beta}\,\dint s.
	 \end{align*}
	 This time, we use Gautschi's inequality \cite[Paragraph 5.6.4]{NIST} and the recurrence formula for the gamma function from \cite[Paragraph 5.5.1]{NIST} to conclude that for $\beta > 1$,
	\begin{equation}\label{eq_8}
	 {c_{d,\beta}\over \beta^{d/2}}={(d/2+\beta)(d/2+\beta -1)\cdots (\beta +1+ \{d/2\})\Gamma\left(\beta+1+\{d/2\}\right)\over \pi^{d/2}\beta^{d/2}\Gamma(\beta+1)}<{(d/2+1)^{d/2}\over \pi^{d/2}},
	\end{equation}
	where $\{x\}$ stands for the fractional part of a real number $x\in\RR$.
	And since $1+x\leq e^x$ for $x>-1$ we see that
		\begin{align*}
	 \EE[\zeta_{\beta}\cap K(A,t)]&\leq  {(d/2+1)^{d/2}\over 2^{d/2}\sqrt{\pi}\Gamma({d+1\over 2})}\int_{-\infty}^{t}(\sqrt{t-s}+A)^{d-1}e^{s/2}\,\dint s\\
	 &=   {2(d/2+1)^{d/2}\over 2^{d/2}\sqrt{\pi}\Gamma({d+1\over 2})}e^{t/2}\int_{0}^{\infty}(\sqrt{2y}+A)^{d-1}e^{-y}\,\dint y\\
	 &=  {2(d/2+1)^{d/2}\over \sqrt{\pi}\Gamma({d+1\over 2})}e^{t/2}\sum\limits_{i=0}^{d-1}{d-1\choose i}2^{(i-d)/2}A^{d-1-i}\int_{0}^{\infty}y^{i/2}e^{-y}\,\dint y\\
	 &<  {2(d/2+1)^{d/2}\over \sqrt{\pi}}e^{t/2}\sum\limits_{i=0}^{d-1}{d-1\choose i}A^{d-1-i}{\Gamma\Big({i\over 2}+1\Big)\over\sqrt{2} \Gamma({d+1\over 2})}\\
	 &<  {2(d/2+1)^{d/2}\over \sqrt{\pi}}(A+1)^{d-1}e^{t/2}.
	 \end{align*}
	
	 Applying the same arguments to the Poisson point process $\zeta^{\prime}_{\beta}$  we obtain for $t<2\beta$ that
	\begin{equation}\label{eq_13}
	\begin{aligned}
	 \EE[\zeta^{\prime}_{\beta}\cap K(A,t)]&={c^{\prime}_{d,\beta}\pi^{d-1\over 2}\over (2\beta)^{d/2}\Gamma({d+1\over 2})}\int_{-\infty}^{t}(\sqrt{t-s}+A)^{d-1}\Big(1-{s\over 2\beta}\Big)^{-\beta}\,\dint s\\
	 &={c^{\prime}_{d,\beta}\pi^{d-1\over 2}\over (2\beta)^{d/2}\Gamma({d+1\over 2})}\sum\limits_{i=0}^{d-1}{d-1\choose i}A^{d-i}\int_{-\infty}^{t}(t-s)^{i/2}\Big(1-{s\over 2\beta}\Big)^{-\beta}\,\dint s.
	 \end{aligned}
	 \end{equation}
	 Next, for $i\in\{0,1,\ldots,d-1\}$ we compute the integral appearing in the last expression:
	 \begin{align*}
	 \int_{-\infty}^{t}(t-s)^{i/2}\Big(1-{s\over 2\beta}\Big)^{-\beta}\,\dint s&=2^{i/2+1}\int_{0}^{\infty}y^{i/2}\Big(1-{t\over 2\beta}+{y\over \beta}\Big)^{-\beta}\,\dint y\\
	&=2^{i/2+1}\beta^{\beta}\Big(\beta-{t\over 2}\Big)^{-\beta+1+i/2}\int_{0}^{\infty}z^{i/2}(1-z)^{-\beta}\,\dint z\\
	&=2^{i/2+1}\beta^{\beta}\Big(\beta-{t\over 2}\Big)^{-\beta+1+i/2}\int_{0}^{1}(1-x)^{i/2}x^{\beta-i/2}\,\dint x\\
	&=2^{i/2+1}\Gamma(i/2+1){\beta^{\beta}\Gamma(\beta-1-i/2)\over \Gamma(\beta)}\Big(\beta-{t\over 2}\Big)^{-\beta+1+i/2}.
\end{align*}
Again, by inequality \cite[Paragraph 5.6.8]{NIST} for $\beta\ge\beta_0>(d+1)/2$ we obtain
$$
{\beta^{i/2+1}\Gamma(\beta-1-i/2)\over \Gamma(\beta)}<\Big(1-{d+1\over 2\beta_0}\Big)^{-(d+1)/2},
$$
and, thus,
	 \begin{align*}
	 \int_{-\infty}^{t}(t-s)^{i/2}\Big(1-{s\over 2\beta}\Big)^{-\beta}\,\dint s&<2^{i/2+1}\Big(1-{d+1\over 2\beta_0}\Big)^{-(d+1)/2}\Gamma(i/2+1)\Big(1-{t\over 2\beta}\Big)^{-\beta+1+{i\over 2}}.
\end{align*}
Moreover since $(1+x)^{1/x}$ is decreasing for $x>-1$ and $i\leq d-1$ we have for $t<0$ and $\beta \ge\beta_0> (d+1)/2$ that
\begin{align*}
\Big(1-{t\over 2\beta}\Big)^{-\beta+1+i/2}&=\Big(1-{t\over 2\beta}\Big)^{-{2\beta\over t}{t\over 2}}\Big(1-{t\over 2\beta}\Big)^{1+i/2}<\Big(1-{t\over 2\beta_0}\Big)^{-\beta_0+(d+1)/2}.
\end{align*}
Combining this with $c^{\prime}_{d,\beta}\beta^{-d/2}<\pi^{-d/2}$ and \eqref{eq_13} we obtain
\begin{align*}
\EE[\zeta^{\prime}_{\beta}\cap K(A,t)]&<{2\over\sqrt{\pi}}\Big(1-{d+1\over 2\beta_0}\Big)^{-(d+1)/2}\Big(1-{t\over 2\beta_0}\Big)^{-\beta_0+(d+1)/2}\\
&\qquad\qquad\qquad\times\sum\limits_{i=0}^{d-1}{d-1\choose i}2^{(i-d+1)/2}A^{d-1-i}{\Gamma({i\over 2}+1)\over \sqrt{2}\Gamma({d+1\over 2})}\\
&<  {2\over\sqrt{\pi}}\Big(1-{d+1\over 2\beta_0}\Big)^{-(d+1)/2}(A+1)^{d-1}\Big(1-{t\over 2\beta_0}\Big)^{-\beta_0+(d+1)/2}\\
&= {2(2\beta_0)^{\beta_0}\over\sqrt{\pi}(2\beta_0-d-1)^{(d+1)/2}}(A+1)^{d-1}\Big(2\beta_0-t\Big)^{-\beta_0+(d+1)/2}.
\end{align*}

	 With a similar computation one also shows that
	 \begin{align*}
	 \EE[\zeta\cap K(A,t)]={1\over 2^{d/2}\sqrt{\pi}\Gamma({d+1\over 2})}\int_{-\infty}^{t}(\sqrt{t-s}+A)^{d-1}e^{s/2}\,\dint s<{2\over\sqrt{\pi}}(A+1)^{d-1}e^{t/2}.
	 \end{align*}
	This completes the proof of the lemma.
\end{proof}

In the next lemma we prove a kind of localization property for $\beta$-, $\beta^{\prime}$- and Gaussian-Delaunay tessellations. It is very much in the spirit of the geometric limit theory of stabilization for which we refer to the survey articles \cite{SchreiberSurvey,YukichSurvey}. For $R\geq 1$ and $r>0$ we say that $\widetilde{\sk{D}}_{\beta}\cap B_R$ \textit{is determined by particles $(v,h)\in\partial \Psi(\zeta_{\beta})$ with $v\in B_{R+r}$}, provided that the $\beta$-Delaunay tessellation within $B_R$ is unaffected by changes of the point configuration $\zeta_{\beta}$ outside of $B_{R+r}\times \RR$. The same terminology is also applied if $\zeta_{\beta}$ is replaced by one of the point processes $\zeta^{\prime}_\beta$ or $\zeta$.

\begin{lemma}\label{lm:StabRadius}
\begin{enumerate}
\item For any $\varepsilon \in (0,1)$, $R\ge 1$ and $\beta\ge 1$ there exists $r_0:=r_0(\varepsilon, R)>0$ such that for any $r>r_0$ we have
$$
\PP(\widetilde{\sk{D}}_{\beta}\cap B_R \text{ is determined by particles } (v,h)\in\partial \Psi(\zeta_{\beta})\text{ with } v\in B_{R+r})> 1- \varepsilon;
$$
\item for any $\varepsilon \in (0,1)$, $R\ge 1$ and $\beta\ge 3(d+1)/2$ there exists $r^{\prime}_0:=r^{\prime}_0(\varepsilon, R)>0$ such that for any $r>r^{\prime}_0$ we have
$$
\PP(\widetilde{\sk{D}}^{\prime}_{\beta}\cap B_R \text{ is determined by particles } (v,h)\in\partial \Psi(\zeta_{\beta}^{\prime})\text{ with } v\in B_{R+r})> 1- \varepsilon;
$$
\item for any $\varepsilon \in (0,1)$ and $R\ge 1$ there exists $\tilde r_0:=\tilde r_0(\varepsilon, R)>0$ such that for any $r>\tilde r_0$ we have
$$
\PP(\sk{D}\cap B_R \text{ is determined by particles } (v,h)\in\partial \Psi(\zeta)\text{ with }v\in B_{R+r})> 1- \varepsilon.
$$
\end{enumerate}
\end{lemma}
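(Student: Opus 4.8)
The plan is to prove a stabilization property by combining the envelope height bounds of Lemma \ref{lm:BetaBoundaryBounds} with the fact that points of very negative height are scarce, phrasing everything so that the resulting radius is uniform in $\beta$. I will write $\xi$ for any of $\zeta_\beta$, $\zeta^{\prime}_\beta$ or $\zeta$ and handle the three cases in parallel, the only difference being the explicit constants supplied by Lemma \ref{lm:BetaBoundaryBounds}.

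First I would fix an upper level $T=T(\varepsilon,R)$ via Lemma \ref{lm:BetaBoundaryBounds}(1) (taking $\beta_0=1$ in the $\beta$-case), so large that
\[
\PP\Big(\sup\nolimits_{(v,h)\in\partial\Psi(\xi),\,v\in B_R} h > T\Big) < \varepsilon/3
\]
uniformly in $\beta$ in the admissible range. On the complementary event $G_1$ the envelope $\partial\Psi(\xi)$ stays below $T$ over $B_R$, so the only points of $\xi$ that can enter the lower envelope (equivalently, the Laguerre tessellation $\cL(\xi)$) over $B_R$ are those whose upward paraboloid dips below level $T$ somewhere over $B_R$, i.e.\ the points of $\xi\cap K(R,T)$, with $K(A,t)$ as in the proof of Lemma \ref{lm:BetaBoundaryBounds}. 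That proof shows $\EE[\xi\cap K(R,T)]$ is finite and bounded uniformly in $\beta$; moreover the part of $K(R,T)$ with spatial coordinate outside $B_{R+r}$ lies below the downward paraboloids emanating from $\partial B_{R+r}$ at height $T$, so its intensity tends to $0$ as $r\to\infty$. Hence I can pick $r$ so large that, with probability at least $1-\varepsilon/3$, every point of $\xi\cap K(R,T)$ has spatial coordinate in $B_{R+r}$; call this event $G_2$.

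Next I would control the \emph{Delaunay} rather than the Laguerre structure. The subtle point is that a simplex $\conv(v_1,\ldots,v_d)$ of $\cM_\Phi(\xi)=\cL^*(\xi)$ meeting $B_R$ need not have its dual apex $(w_0,t_0)$ of $\Pi(x_1,\ldots,x_d)=\Pi_{-,(w_0,t_0)}$ inside $B_R$, so I must rule out remote vertices. Writing $p\in\conv(v_1,\ldots,v_d)\cap B_R$ as $p=\sum_i\lambda_i v_i$ and using convexity of $\|\cdot\|^2$ gives $\|p-w_0\|^2\le\sum_i\lambda_i\|v_i-w_0\|^2=t_0-\sum_i\lambda_i h_i$; combined with $\|v_i-w_0\|^2=t_0-h_i$ and the emptiness of $\Pi_{-,(w_0,t_0)}^{\downarrow}$, this forces either some vertex $v_i$ to be deep (height $\approx-\|v_i\|^2$) or the downward paraboloid to enclose a large point-free region below it. Both are rare: a far vertex of that height is excluded by a Mecke computation of the same shape as those in Lemma \ref{lm:BetaBoundaryBounds} (the intensity of $\xi$ over $\{\|v'\|>R+r,\ h'\lesssim-(\|v'\|-R)^2\}$ is integrable and uniformly small in $\beta$), and a large empty region is excluded by the void-probability estimate behind Lemma \ref{lm:BetaBoundaryBounds}(2). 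I would package this into an event $G_3$, of probability at least $1-\varepsilon/3$, on which every simplex of $\cL^*(\xi)$ meeting $B_R$ has all of its vertices and its apex inside $B_{R+r}$, after enlarging $r$ if necessary.

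Finally, on $G_1\cap G_2\cap G_3$ the set $\widetilde{\sk{D}}_\beta\cap B_R$ is a deterministic function of $\xi\cap(B_{R+r}\times\RR)$: every simplex meeting $B_R$ has its vertices in $B_{R+r}$, its facet status $\xi\cap\Pi(x_1,\ldots,x_d)^{\downarrow}=\{x_1,\ldots,x_d\}$ is witnessed only by points with spatial coordinate in $B_{R+r}$, and by $G_1,G_2$ no exterior point can enter the lower envelope over $B_R$ to create or destroy such a simplex. Since $\PP((G_1\cap G_2\cap G_3)^c)<\varepsilon$ with all three bounds uniform in $\beta$, this gives $r_0=r_0(\varepsilon,R)$ (and analogously $r_0'$, $\tilde r_0$) independent of $\beta$; the cases $\zeta^{\prime}_\beta$ and $\zeta$ differ only through the corresponding constants of Lemma \ref{lm:BetaBoundaryBounds}. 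I expect the main obstacle to be the geometric localization of the third step: converting ``the simplex meets $B_R$'' into a quantitative confinement of its apex and vertices that is robust in $\beta$. Because the envelope grows quadratically, no purely deterministic confinement closes, and one genuinely has to trade a large circum-apex distance against the exponential scarcity of deep points via the void estimates of Lemma \ref{lm:BetaBoundaryBounds}.
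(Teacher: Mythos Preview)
Your decomposition into $G_1,G_2,G_3$ is genuinely different from the paper's argument, and as written it has a real gap in two places.

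First, the step $G_3$: your convexity inequality $\|p-w_0\|^2\le t_0-\sum_i\lambda_ih_i$ is correct but it does not by itself produce the dichotomy ``deep vertex or large empty region''. To get anything useful you must control the apex height $t_0$, and for that you need a union bound over the possible apex positions $(w_0,t_0)$ --- the void probability of a single paraboloid is small, but there are infinitely many candidate apexes. This is precisely the technical heart the paper supplies: it conditions on $\inf_{(v,h)\in\partial\Psi(\xi),\,v\in B_{R+r}} h>t$ (the infimum over the \emph{larger} ball, not the supremum over $B_R$ as in your $G_1$), uses $\partial\Phi\ge\partial\Psi$ and connectedness of the facet to force the apex of any ``bad'' facet into an explicit region $K_1(R,t)\cap K_2(R+r,t)$, then partitions that region into unit boxes and applies Lemma~\ref{lm:BetaBoundaryBounds}(1) in each box to sum the probabilities. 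Your sketch for $G_3$ would, once made rigorous, end up re-deriving essentially this box argument.

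Second, and more seriously, the final claim that on $G_1\cap G_2\cap G_3$ the set $\widetilde{\sk D}_\beta\cap B_R$ is determined by $\xi\cap(B_{R+r}\times\RR)$ is not justified. Even if every simplex meeting $B_R$ has all its vertices and its apex spatially inside $B_{R+r}$, the emptiness condition $\xi\cap\Pi(x_1,\ldots,x_d)^\downarrow=\{x_1,\ldots,x_d\}$ still involves the whole downward paraboloid, which projects to all of $\RR^{d-1}$. A point $(v,h)\in\xi$ with $\|v\|>R+r$ and $h$ very negative can lie below such a paraboloid and thereby destroy the simplex; your events $G_1,G_2$ (which concern upward paraboloids reaching below level $T$ over $B_R$) do not exclude this. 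You would need an additional event bounding the apex heights $t_0$ \emph{and} excluding deep exterior points under those paraboloids --- which again leads back to the paper's mechanism of conditioning on the infimum of $\partial\Psi$ over $B_{R+r}$. In short, your $G_1,G_2$ localize the Laguerre tessellation over $B_R$, which is a detour; the paper bypasses this and works directly with the hull facets.
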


\begin{proof}
In what follows, let us write $\xi$ for one of the Poisson point processes $\zeta$, $\zeta_{\beta}$ or $\zeta^{\prime}_{\beta}$ and let $\widetilde{\sk{D}}(\zeta):=\sk{D}$,  $\widetilde{\sk{D}}(\zeta_{\beta}):=\widetilde{\sk{D}}_{\beta}$, $\widetilde{\sk{D}}(\zeta^{\prime}_{\beta}):=\widetilde{\sk{D}}_{\beta}^{\prime}$. For $R\geq 1$ and $r>0$ consider the event
$$
E(\xi):=\{\widetilde{\sk{D}}(\xi)\cap B_R \text{ is not determined by particles } (v,h)\in\partial \Psi(\xi)\text{ with }v\in B_{R+r}\}.
$$
By Lemma \ref{lm:BetaBoundaryBounds} and the law of total probability we have that
\begin{equation}\label{eq_06}
\begin{aligned}
\PP(E(\xi))&= \PP\big(E(\xi) \, \big| \, \inf\limits_{\substack{(v,h)\in\partial \Psi(\xi),v\in B_{R+r}}} h > t\big)\PP\big(\inf\limits_{\substack{(v,h)\in\partial \Psi(\xi),v\in B_{R+r}}} h > t\big)\\\
&\qquad\qquad+\PP\big(E(\xi) \, \big| \, \inf\limits_{\substack{(v,h)\in\partial \Psi(\xi),v\in B_{R+r}}} h < t\big)\PP\big(\inf\limits_{\substack{(v,h)\in\partial \Psi(\xi),v\in B_{R+r}}} h < t\big)\\
&\leq P_1(\xi)+P_2(\xi),
\end{aligned}
\end{equation}
where
\begin{align*}
P_1(\xi)&:= \PP\big(E(\xi) \, \big| \, \inf\limits_{\substack{(v,h)\in\partial \Psi(\xi),v\in B_{R+r}}} h > t\big),\\
P_2(\xi)&:=\PP\big(\inf\limits_{\substack{(v,h)\in\partial \Psi(\xi),v\in B_{R+r}}} h < t\big).
\end{align*}

According to the construction, $\widetilde{\sk{D}}(\xi)$ coincides almost surely with the skeleton of the random tessellation $\cM_{\Phi}(\xi)$, recall \eqref{eq:DefTPsi}. Then $\widetilde{\sk{D}}(\xi)\cap B_R$ is determined as soon as we know the location of all vertices of paraboloid facets of $\Phi(\xi)$ hitting the set $B_R\times \RR$ and the event $E(\xi)$ occurs if and only if there is a paraboloid facet of $\Phi(\xi)$ hitting the set $B_R\times \RR$ and having a vertex $(v,h)$ with $v\not \in B_{R+r}$. Let $\Pi_{-}(v',h')$ be a paraboloid such that $F(v',h'):=\Pi_{-}(v',h')\cap \Phi(\xi)$ is a paraboloid facet of $\Phi(\xi)$. Assume that $(B_R\times \RR)\cap F(v',h')\neq \varnothing$ and there is a vertex $(v,h)\in F(v',h')$ with $v\not \in B_{R+r}$. Further we note that the set $F(v',h')$ is connected, paraboloid convex, $F(v',h')\subset \partial \Phi(\xi)$ and
$$
\inf\limits_{\substack{(v,h)\in\partial \Phi(\xi),v\in B_{R+r}}} h\ge \inf\limits_{\substack{(v,h)\in\partial \Psi(\xi),v\in B_{R+r}}} h.
$$
\begin{figure}[t]
	\centering
	\includegraphics[width=0.6\columnwidth]{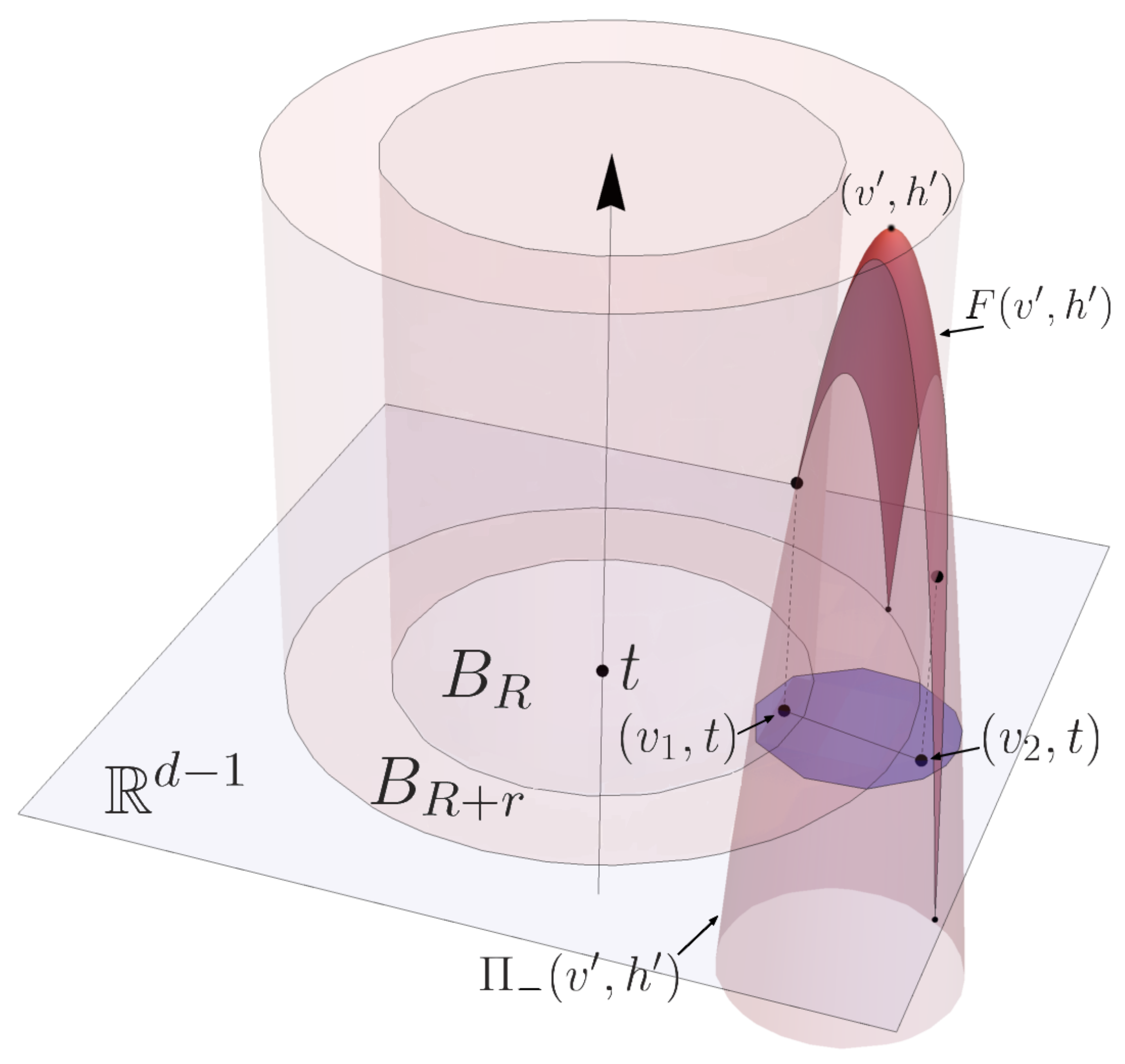}
	\caption{Illustration of the paraboloid facet $F(v',h')$ hitting the set $B_R\times\RR$ and having a vertex $(v,h)$ with $v\not\in B_{R+r}$.}
	\label{fig:proof_fig1}
\end{figure}

This implies that conditioned on
$$
\inf\limits_{\substack{(v,h)\in\partial \Psi(\xi),v\in B_{R+r}}} h > t
$$
we have
$$
F(v',h')\cap (B_{R+r}\times \RR)=F(v',h')\cap (B_{R+r}\times (t,\infty))\neq \varnothing,
$$
and, hence, there are two points $v_1\in \RR^{d-1}\setminus B_{R+r}$ and $v_2\in B_R$ such that for any $v_s=sv_1+(1-s)v_2$, $s\in[0,1]$ we have $(\{v_s\}\times [t,\infty))\cap F(v',h')\neq \varnothing$. Then for any $s\in[0,1]$ we have $v_s\in \Pi_{-}(v',h')\cap (\RR^{d-1}\times \{t\})$, which in turn means that $B_{\sqrt{h'-t}}(v')\cap B_R\neq \varnothing$ and $B_{\sqrt{h'-t}}(v')\cap (\RR^{d-1}\setminus B_{R+r})\neq \varnothing$, see Figure \ref{fig:proof_fig1}.

In this case we obtain the following estimate:
\begin{align*}
P_1(\xi)&\leq \PP\big(\exists\,\text{ a paraboloid facet } F(v',h') { of } \Phi(\xi)\colon \\
&\hspace{1cm}B_{\sqrt{h'-t}}(v')\cap B_R\neq \varnothing, B_{\sqrt{h'-t}}(v')\cap (\RR^{d-1}\setminus B_{R+r})\neq \varnothing\big).
\end{align*}
Next, we observe that the condition $B_{\sqrt{h'-t}}(v')\cap B_R\neq \varnothing$ is equivalent to
$$
(v',h')\in \{(v,h)\in \RR^{d-1}\times [t,\infty)\colon v\in B_{R+\sqrt{h-t}}\}=:K_1(R,t),
$$
and the condition $B_{\sqrt{h'-t}}(v')\cap (\RR^{d-1}\setminus B_{R+r})\neq \varnothing$ is equivalent to
$$
(v',h')\in \{(v,h)\in \RR^{d-1}\times [t,\infty)\colon v\in (\RR^{d-1}\setminus B_{R+r-\sqrt{h-t}})\}=:K_2(R+r,t),
$$
see Figure \ref{fig:proof_fig2}.
Then
\begin{align}
P_1(\xi)&\leq \PP(\exists\,\text{ a paraboloid facet } F(v',h') \text{ of } \Phi(\xi)\colon (v',h')\in K_1(R,t)\cap K_2(R+r,t)),\label{eq_5}
\end{align}

For some $a>0$ consider a partition of $\RR^{d-1}$ into the boxes $Q_{x,y}$, $x\in a\ZZ^{d-1}$, $y\in\ZZ$ of the form
$$
Q_{x,y}:=(x\oplus [0,a]^{d-1})\times [t+r^2/4+y,t+r^2/4+y+1],
$$
where $\oplus$ denotes Minkowski addition. For fixed $y\in \ZZ$, $y\ge 0$, denote by $Q_{x_1,y},\ldots, Q_{x_{m(y)},y}$ the $m(y)$ boxes having non-empty intersection with the set
\begin{equation}\label{eq_13.11.20_eq2}
\begin{aligned}
K(t,R,r,y)&:=\{(v,h)\in K_1(R,t)\cap K_2(R+r,t)\colon  h \in [t+r^2/4+y,t+r^2/4+y+1]\}\\
&\subset\{(v,h)\colon  h \in [t+r^2/4+y,t+r^2/4+y+1], v\in B_{R+\sqrt{h-t}}\}.
\end{aligned}
\end{equation}
Note that for $y <0$ the set $K(t,R,r,y)$ is empty, that is why we can restrict ourself to $y\geq 0$.

\begin{figure}[t]
	\centering
	\includegraphics[width=0.3\columnwidth]{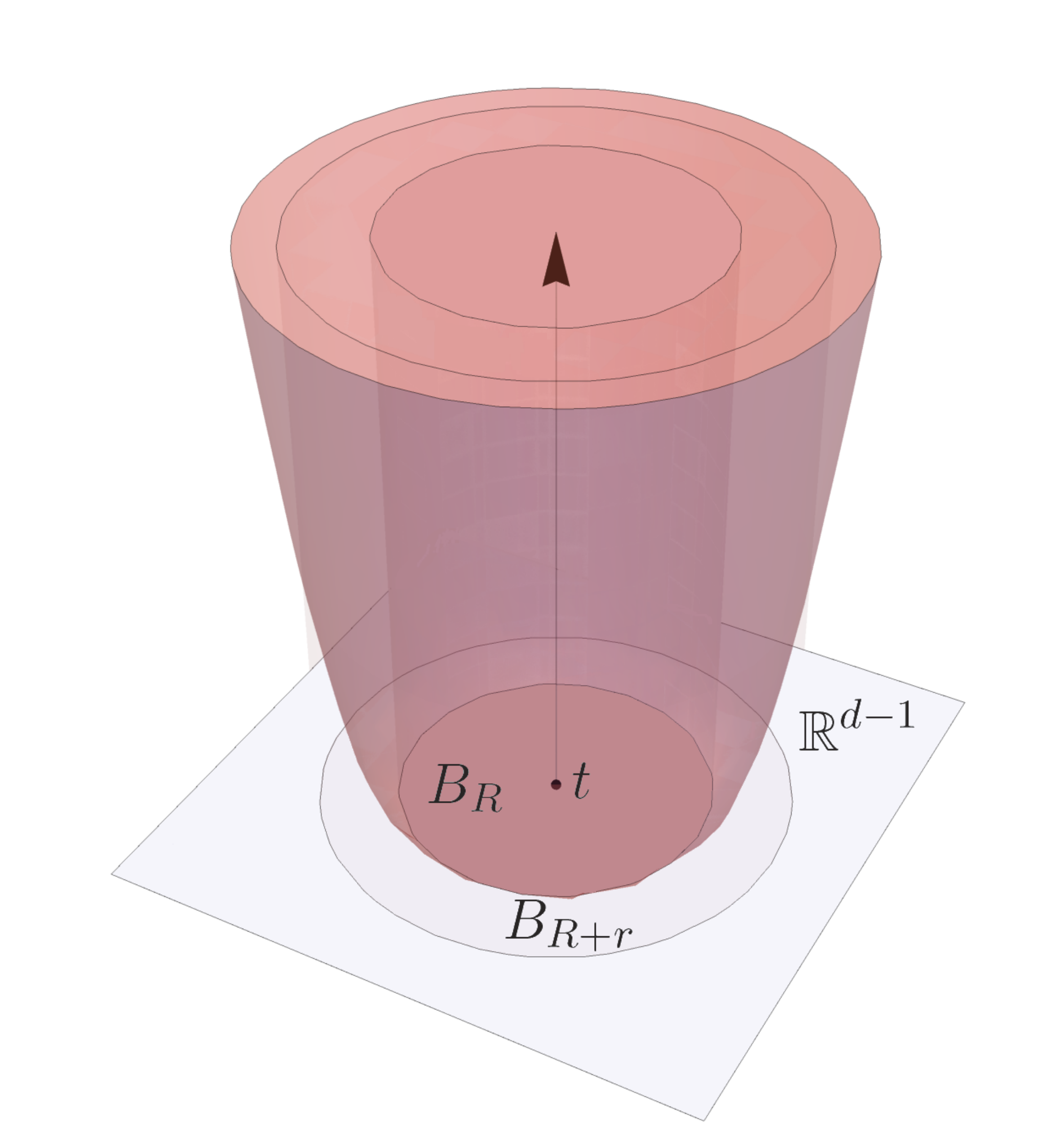}\includegraphics[width=0.3\columnwidth]{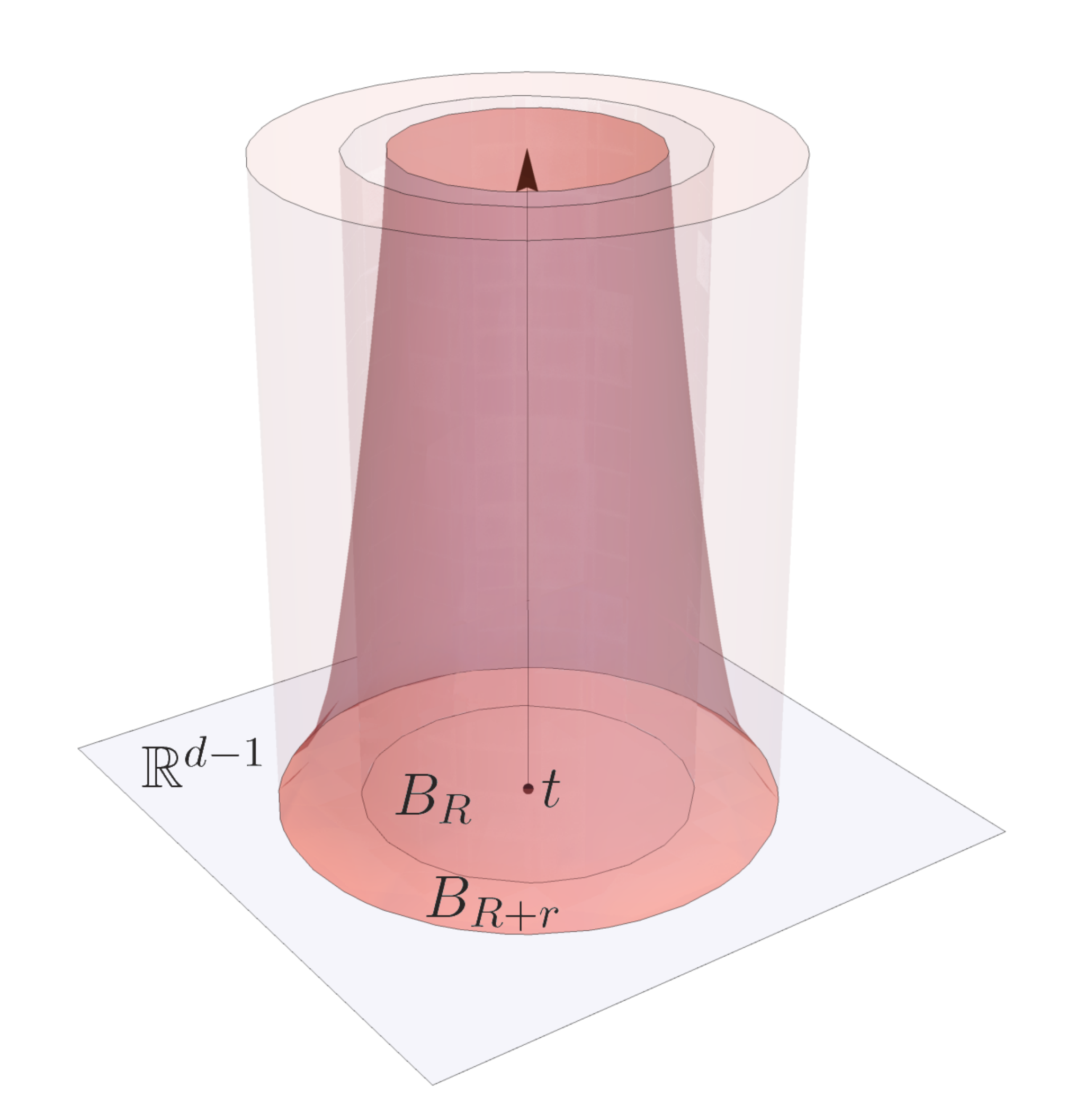}\includegraphics[width=0.3\columnwidth]{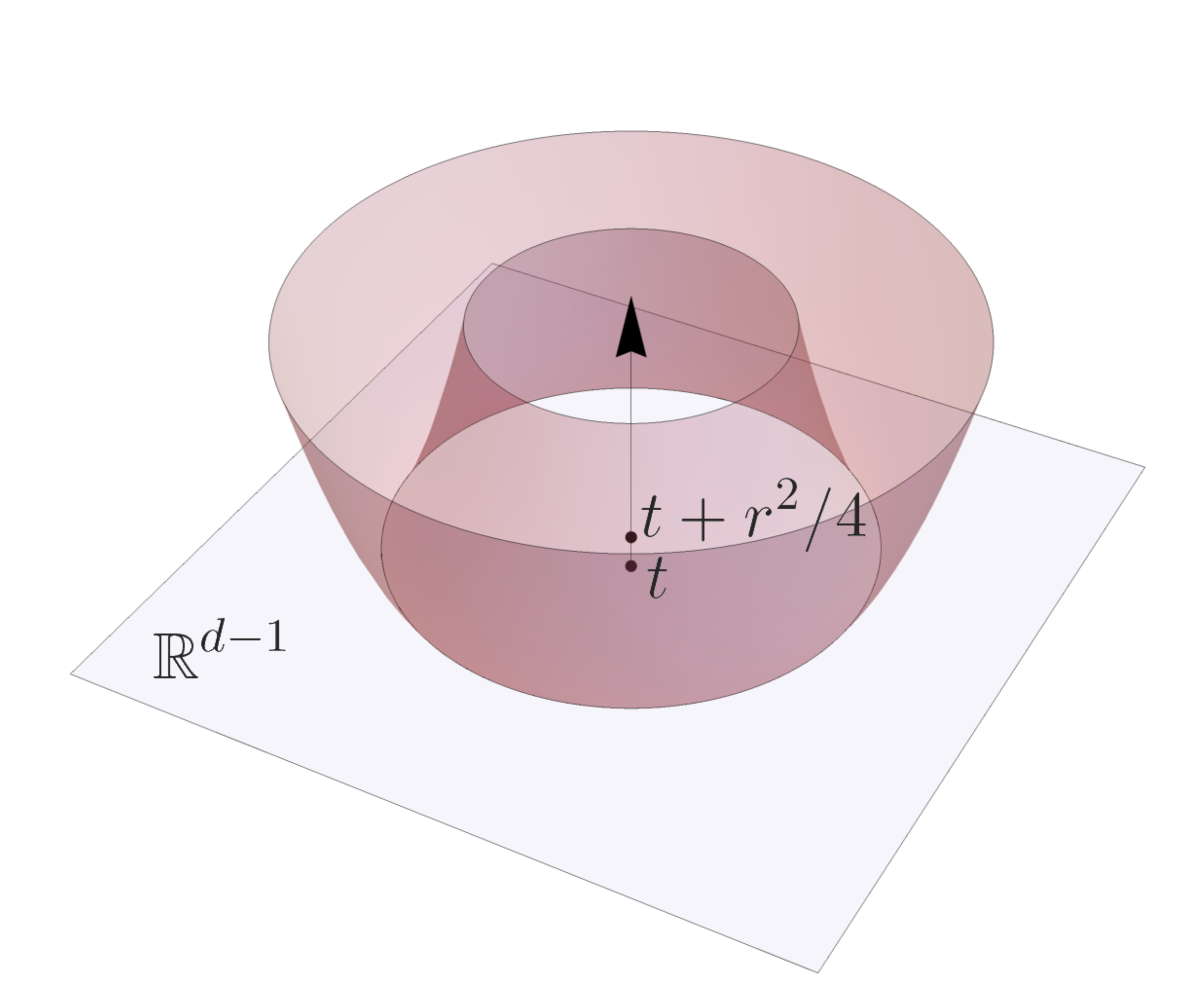}
	\caption{Illustration of the set $K_1(R,t)$ (left),  the complement of the set $K_2(R+r,t)$ (centre) and the set $K_1(R,t)\cap K_2(R+r,t)$ (right).}
	\label{fig:proof_fig2}
\end{figure}

For $y\in\ZZ$, $y\geq 0$ and any $x\in \RR^{d-1}$ we have
\begin{align*}
\PP\Big(\sup\limits_{\substack{(v,h)\in\partial \Psi(\xi),v\in (x\oplus [0,a]^{d-1})}} h > t+r^2/4+y\Big)&=\PP\Big(\sup\limits_{\substack{(v,h)\in\partial \Psi(\xi),v\in [0,a]^{d-1}}} h > t+r^2/4+y\Big)\\
&\leq \PP\Big(\sup\limits_{\substack{(v,h)\in\partial \Psi(\xi),v\in B_{\sqrt{d-1}a}}} h > t+r^2/4+y\Big)\\
&=:p_a(\xi,y).
\end{align*}
Further, applying the Boole's inequality to \eqref{eq_5} and using the fact that apexes of the parabolic facets of paraboloid hull process $\Phi(\xi)$ belong to the boundary of paraboloid growth process $\Psi(\xi)$ (see also Figure \ref{fig:GrowthHullProcess}) we see that
\begin{equation}\label{eq_13.11.20_eq1}
\begin{aligned}
P_1(\xi)&< \sum_{y=0}^{\infty}\sum_{j=0}^{m(y)}\PP(\exists\,\text{ a paraboloid facet } F(v',h') \text{ of } \Phi(\xi)\colon (v',h')\in Q_{x_j,y})\\
&< \sum_{y=0}^{\infty}\sum_{j=0}^{m(y)}\PP\Big(\sup\limits_{\substack{(v,h)\in\partial \Psi(\xi),v\in (x_j\oplus [0,a]^{d-1})}} h > t+r^2/4+y\Big)\\
&<\sum_{y=0}^{\infty}m(y)p_{a}(\xi,y).
\end{aligned}
\end{equation}
It remains to estimate $m(y)$. Since all boxes intersecting $K(t,R,r,y)$ for fixed $y$ are included in the extended set
\begin{align*}
\bigcup\limits_{1\leq i\leq m(y)}Q_{x_i,y}\subset \tilde K(t,R,r,y)&:=[t+r^2/4+y,t+r^2/4+y+1]\times B_{R+\sqrt{r^2/4+y+1}+\sqrt{d-1}a},
\end{align*}
we can conclude, that,
$$
m(y)\leq a^{-d+1}\Vol(\tilde K(t,R,r,y))=\kappa_{d-1}a^{-d+1}\big(R+\sqrt{r^2/4+y+1}+\sqrt{d-1}a\big)^{d-1}.
$$
Substituting this estimate into \eqref{eq_13.11.20_eq1} we obtain
$$
P_1(\xi)\leq\sum_{y=0}^{\infty}\sum_{j=1}^{m(y)}p(\xi,y)<\kappa_{d-1}a^{-d+1}\sum_{y=0}^{\infty}\big(R+\sqrt{r^2/4+y+1}+\sqrt{d-1}a\big)^{d-1}p_a(\xi,y).
$$

First consider the case $\xi=\zeta_{\beta}$. Using estimate 1(a) of Lemma \ref{lm:BetaBoundaryBounds} for $r^2>-4t+16(d-1)a^2$, $\beta_0=1$ and $(\sqrt{d-1}a)^{d-1}=2^{d/2}\sqrt{\pi}\Gamma({d+1\over 2})$ we obtain
\begin{align*}
p(\zeta_\beta,y)\ll_de^{-t/2-r^2/8-y/2}
\end{align*}
and for $R\ge 1$ we have
\begin{align*}
P_1(\zeta_\beta)&\ll_d \sum_{y=0}^{\infty} (R+\sqrt{r^2/4+y+1}+\sqrt{d-1}a)^{d-1}e^{-t/2-r^2/8-y/2}\\
&\ll_d\sum_{i=0}^{d-1}{d-1 \choose i}R^{d-1-i}\sum_{y=0}^{\infty} (r^2/4+y+1)^{i\over 2}e^{-t/2-r^2/8-y/2}.
\end{align*}
For $0\leq i\leq d-1$ consider the functions
$$
S_i(y):=(r^2/4+y+1)^{i\over 2}e^{-t/2-r^2/8-y/2}.
$$
Let us point out that for $r>\sqrt{2d}\ge 2$ all functions $y\mapsto S_i(y)$, $0\leq i\leq d-1$ are strictly decreasing in $y\ge 0$ and the sum can be estimated as
\begin{equation}\label{eq:10.05.20_1}
\begin{aligned}
P_1(\zeta_\beta)&\ll_d\sum_{i=0}^{d-1}{d-1 \choose i}R^{d-1-i}\Big(\int_{0}^{\infty}(r^2/4+y+1)^{i\over 2}e^{-t/2-r^2/8-y/2}\dd y + (r^2/4+1)^{i\over 2}e^{-t/2-r^2/8}\Big)\\
&\ll_de^{-t/2}\sum_{i=0}^{d-1}{d-1 \choose i}R^{d-1-i}\Big(\int_{r^2/8}^{\infty}y^{i\over 2}e^{-y}\dd y + r^ie^{-r^2/8}\Big).
\end{aligned}
\end{equation}
The last integral is the incomplete gamma function $\Gamma(i/2+1,r^2/8)$ and its asymptotics, as $r\to\infty$, is well-known. Using the recurrence relation
$$
\Gamma(a+n,z)=(a)_n\Gamma(a,z)+z^ae^{-z}\sum\limits_{k=0}^{n-1}{\Gamma(a+n)\over\Gamma(a+k+1)}z^k
$$
from \cite[Equation (8.8.9)]{NIST} and the estimate $z^{1-a}e^z\Gamma(a,z)\leq 1$ for $z>0$ and $0<a\leq 1$ from  \cite[Inequality (8.10.1)]{NIST} we conclude that for $r\ge\sqrt{2d}>2$ and for $i\in\{0,1,\ldots,d-1\}$,
$$
\int_{r^2/8}^{\infty}y^{i/2}e^{-y}\,\dd y\ll_d r^ie^{-r^2/8}.
$$
Combining this with \eqref{eq:10.05.20_1} we obtain, that
\begin{equation}\label{eq_07}
P_1(\zeta_\beta)\ll_d(R+r)^{d-1}e^{-t/2-r^2/8}.
\end{equation}
In a next step, we observe that by estimate 2(a) of Lemma \ref{lm:BetaBoundaryBounds} for
$$
t=2\ln\ln\Big({1\over 1-\varepsilon/2}\Big)-2(d-1)\ln (R+r)-d\ln(2d+4)+\ln\pi
$$
and for $R\ge 1$ we have that
$$
P_2(\zeta_{\beta})\leq 1-\exp\big(-2^d(d/2+1)^{d/2}\pi^{-1/2}(R+r)^{d-1}e^{t/2}\big)\leq {\varepsilon\over 2}.
$$
It is also easy to ensure, that for sufficiently big $r$ (depending on $R$, $\varepsilon$ and $d$) we have that $r^2>-4t+16(d-1)a^2$. Thus, substituting expression above into \eqref{eq_07} we obtain for some constant $c_1(d)>0$, that
\begin{align*}
P_1(\zeta_{\beta})\leq {c_1(d)\over \ln\big({1\over 1-\varepsilon/2}\big)} (R+r)^{3(d-1)}e^{-r^2/8}.
\end{align*}
Since $\lim_{r\to\infty}(R+r)^{3(d-1)}e^{-r^2/8}=0$ and function on the right hand side of the last inequality is strictly decreasing for sufficiently large $r$, we may now choose $r_0(R,\varepsilon)>0$, depending on $R$, $\varepsilon$ and $d$ only in such a way that $P_1(\zeta_{\beta})\leq\varepsilon/2$ for any $r>r_0(R,\varepsilon)$.

By analogy, consider the case $\xi=\zeta^{\prime}_\beta$. Using the estimate 1(b) of Lemma \ref{lm:BetaBoundaryBounds} and choosing $a>0$ such that $(\sqrt{d-1}a)^{d-1}=(2(d+1))^{d/2}\sqrt{\pi}\Gamma({d+1\over 2})$
and taking into account that $R\ge 1$ we obtain
\begin{align*}
P_1(\zeta^{\prime}_\beta)\ll_d\sum_{i=0}^{d-1}{d-1 \choose i}R^{d-1-i}\sum_{y=0}^{\infty} (r^2/4+y+1)^{i\over 2}\exp(-e^{t/2+r^2/8+y/2-2(d-1)a^2}).
\end{align*}
Moreover, assuming that $r^2>-4-4t+16(d-1)a^2$ and using the fact that $1+x\leq e^x$ for $x>-1$ we have
\begin{align*}
P_1(\zeta^{\prime}_\beta)\ll_d\sum_{i=0}^{d-1}{d-1 \choose i}R^{d-1-i}\sum_{y=0}^{\infty} (r^2/4+y+1)^{i\over 2}e^{-t/2-r^2/8-y/2}.
\end{align*}
This expression was estimated above (see \eqref{eq_07}) and for $r^2>\max(-4-4t+16(d-1)a^2,\sqrt{2d})$ we get
\begin{equation}\label{eq_07_prime}
P_1(\zeta^{\prime}_\beta)\ll_d(R+r)^{d-1}e^{-t/2-r^2/8}.
\end{equation}
In the next step we will apply the estimate 2(b) of Lemma \ref{lm:BetaBoundaryBounds} with
$$
t=(3d-1)-{(3d-1)2^{d\over d-1}\over \pi^{1\over 2(d-1)}3^{d+1\over 2(d-1)}}(R+r)\Big(\ln\Big({1\over 1-\epsilon/2}\Big)\Big)^{-1/(d-1)}.
$$
It should be noted at this point, that $t<0$ and $r^2>-4-4t+16(d-1)a^2$ for sufficiently large $r$ (depending on $R$, $\varepsilon$ and $d$). Thus, for $\beta_0=3/2d-1/2$, $R>1$ and $d\ge 2$ we have
$$
P_2(\zeta^{\prime}_{\beta})\leq 1-\exp\Big(-{2^d\over\sqrt{\pi}}\big({3d-1\over 2d-2}\big)^{(d+1)/2}(R+r)^{d-1}\Big(1-{t\over 3d-1}\Big)^{-d+1}\Big)\leq {\varepsilon\over 2}.
$$
Combining this with \eqref{eq_07_prime} we get
\begin{align}\label{eq:26-01-21}
P_1(\zeta_{\beta})\leq c_{2}(d) (R+r)^{d-1}e^{-{r^2\over 8}+{c_3(d,\varepsilon) r\over 2}+{c_3(d,\varepsilon) R\over 2}}
\end{align}
for some constants $c_{2}(d)>0$ and $c_3(d,\varepsilon)>0$. Since for any $c_3(d,\varepsilon)>0$ we have
$$
\lim\limits_{r\to\infty}(R+r)^{d-1}e^{-{r^2\over 8}+{c_3(d,\varepsilon) r\over 2}+{c_3(d,\varepsilon) R\over 2}}=0,
$$
the right hand side in \eqref{eq:26-01-21} is decreasing for sufficiently large $r$, so that there exists $r^{\prime}_0(R,\varepsilon)>0$ such that $P_1(\zeta^{\prime}_{\beta})\leq\varepsilon/2$ for any $r>r^{\prime}_0(R,\varepsilon)$.

Finally, in case $\xi=\zeta$ we apply the estimate 1(c) of Lemma \ref{lm:BetaBoundaryBounds} with $a>0$ such that $(\sqrt{d-1}a)^{d-1}=2^{d/2-1}\sqrt{\pi}\Gamma({d+1\over 2})$. Together with the estimates $R\ge 1$ and $1+x\leq e^x$ for $x>-1$, and proceeding as above (see \eqref{eq_07}) with assumption that $r^2>\max(-4-4t+16(d-1)a^2,\sqrt{2d})$ we obtain
\begin{align*}
P_1(\zeta)\ll_d\sum_{i=0}^{d-1}{d-1 \choose i}R^{d-1-i}\sum_{y=0}^{\infty} (r^2/4+y+1)^{i\over 2}e^{-t/2-r^2/8-y/2}\ll_d(R+r)^{d-1}e^{-t/2-r^2/8}.
\end{align*}

Then by estimate 2(c) of Lemma \ref{lm:BetaBoundaryBounds} for
$$
t=2\ln\ln\Big({1\over 1-\varepsilon/2}\Big)-2\ln (R+r)^{d-1}-2d\ln 2+\ln\pi
$$
we get
$$
P_2(\zeta)\leq 1-\exp\big(-{2^d\over\sqrt{\pi}}(R+r)^{d-1}e^{t/2}\big)\leq {\varepsilon\over 2}.
$$
For sufficiently large $r$ it holds that $r^2>-4-4t+16(d-1)a^2$, implying that
$$
P_1(\zeta)\ll_d {1\over \ln\big({1\over 1-\varepsilon/2}\big)}(R+r)^{2(d-1)}e^{-{r^2\over 8}}.
$$
The last expression s decreasing for sufficiently large $r$ and vanishing, as $r\to\infty$. Thus, there exists $\tilde r_0(\epsilon, R)>0$, such that for $r>\tilde r_0(\epsilon, R)$ we have $P_1(\zeta)\leq {\varepsilon\over 2}$. In combination with \eqref{eq_06} this completes the argument.
\end{proof}

\subsection{Proof of the Theorem \ref{tm:CobvergenceGaussian}}\label{sec:ProofFinalStep}

We will give the proof only for $\widetilde{\sk{D}}_\beta$. The proof for $\widetilde{\sk{D}}^{\prime}_\beta$ works in the same way. To this end, let $C\in\cC$. From the construction of the Gaussian-Delaunay tessellation $\cD$ it is clear that the capacity functional of $\sk{D}$ satisfies $T(C)=T(\inter C)$. Since $C$ is bounded there exists $R\ge 1$ such that $C\subset B_{R}$. To simplify our notation, we denote the capacity functional of $\widetilde{\sk{D}}_\beta$ by $T_\beta(\,\cdot\,)$ and that of the limiting random closed set $\sk{D}$ by $T(\,\cdot\,)$. By the law of total probability we have
\begin{align*}
T_{\beta}(C)&=\PP\big(\widetilde{\sk{D}}_{\beta}\cap C \neq \varnothing \, \big| \, \widetilde{\sk{D}}_{\beta}\cap B_R = \sk{D}\cap B_R\big)\PP(\widetilde{\sk{D}}_{\beta}\cap B_R = \sk{D}\cap B_R)\\
&\qquad\qquad+\PP\big(\widetilde{\sk{D}}_{\beta}\cap C \neq \varnothing \, \big| \, \widetilde{\sk{D}}_{\beta}\cap B_R \neq \sk{D}\cap B_R\big)\PP(\widetilde{\sk{D}}_{\beta}\cap B_R \neq \sk{D}\cap B_R)\\
&= T(C) -\PP\big(\sk{D}\cap C \neq \varnothing \, \big| \, \widetilde{\sk{D}}_{\beta}\cap B_R \neq \sk{D}\cap B_R\big)\PP(\widetilde{\sk{D}}_{\beta}\cap B_R \neq \sk{D}\cap B_R) \\
&\qquad\qquad+\PP\big(\widetilde{\sk{D}}_{\beta}\cap C \neq \varnothing \, \big| \, \widetilde{\sk{D}}_{\beta}\cap B_R \neq \sk{D}\cap B_R\big)\PP(\widetilde{\sk{D}}_{\beta}\cap B_R \neq \sk{D}\cap B_R).
\end{align*}
Thus
\begin{equation}\label{eq_10}
T(C)-\PP(\widetilde{\sk{D}}_{\beta}\cap B_R \neq \sk{D}\cap B_R)\leq T_{\beta}(C)\leq T(C)+\PP(\widetilde{\sk{D}}_{\beta}\cap B_R \neq \sk{D}\cap B_R).
\end{equation}
Further, according to Lemma \ref{lm:StabRadius} for any $\varepsilon \in (0,1)$ and  $r>\max(r_0(\varepsilon,R), \tilde r_0(\varepsilon,R))$ we have
$$
\PP(\widetilde{\sk{D}}_{\beta}\cap B_R \neq \sk{D}\cap B_R)\leq \PP(\partial \Psi(\zeta_{\beta}) \neq \partial \Psi(\zeta) \text{ in } B_{R+r}\times \RR) + 2\varepsilon,
$$
and from Lemma \ref{lm:BetaBoundaryBounds} 1(a) and 1(c) for any $T>T_0(\varepsilon, R+r)$ we obtain
$$
\PP(\widetilde{\sk{D}}_{\beta}\cap B_R \neq \widetilde{\sk{D}}\cap B_R)\leq \PP(\partial \Psi(\zeta_{\beta}) \neq \partial \Psi(\zeta) \text{ in } B_{R+r}\times (-\infty,T]) + 4\varepsilon.
$$
It is clear now that the boundary of the paraboloid growth process $\Psi(\zeta_\beta)$ restricted to $B_{R+r}\times (-\infty,T]$ depends only on the restriction of the Poisson point process $\zeta_\beta$ to the set
$$
K(R+r,T)=(B_{R+r}\times (-\infty,T])\cup \Big(\bigcup\limits_{w\in\partial B_{R+r}}\Pi_{-,(w,T)}^{\downarrow}\Big),
$$
since only paraboloids $\Pi_{+,x}$ with $x\in K(R+r,T)$ can intersect $B_{R+r}\times (-\infty,T]$, see also Figure \ref{fig:Proof2}. Coupling the Poisson point processes $\zeta$ and $\zeta_{\beta}$ on a common probability space we obtain
$$
\PP(\widetilde{\sk{D}}_{\beta}\cap B_R \neq \sk{D}\cap B_R)\leq \PP(\zeta_{\beta}\cap K(R+r,T) \neq \zeta\cap K(R+r,T)) + 4\varepsilon.
$$
However, the last probability is equal to total variation distance between the Poisson point processes $\zeta$ and $\zeta_{\beta}$ (see \cite[Equation (5.12)]{Th00}), which according to \cite[Theorem 3.2.2]{Rei93} is bounded from above by a constant multiple of the $L_1$-norm of the difference of their densities. Thus,
\begin{align*}
&\PP(\widetilde{\sk{D}}_{\beta}\cap B_R \neq \sk{D}\cap B_R)\\
&\qquad\qquad\leq {3\over 2}\int_{-\infty}^T\int_{\{w\in\RR^{d-1}:\|w\|\leq \sqrt{T-s}+R+r\}}\Big|{e^{s/2}\over (2\pi)^{d/2}}-{c_{d,\beta}\over (2\beta)^{d/2}}{\bf 1}(s\geq -2\beta)\,\big(1+{s\over 2\beta}\big)^{\beta}\Big|\,\dint s\dint w  + 4\varepsilon\\
&\qquad\qquad={3\kappa_{d-1}\over 2}\int_{-\infty}^T(\sqrt{T-s}+R+r)^{d-1}\\
&\qquad\qquad\qquad\qquad\times\Big|{e^{s/2}\over (2\pi)^{d/2}}-{c_{d,\beta}e^{s/2}\over (2\beta)^{d/2}}+{c_{d,\beta}e^{s/2}\over (2\beta)^{d/2}}-{c_{d,\beta}\over (2\beta)^{d/2}}{\bf 1}(s\geq -2\beta)\,\big(1+{s\over 2\beta}\big)^{\beta}\Big|\,\dint s  + 4\varepsilon.
\end{align*}
Due to \eqref{eq_08} and the fact that $1+x\leq e^x$ for all $x>-1$ we obtain
\begin{align*}
&\PP(\widetilde{\sk{D}}_{\beta}\cap B_R \neq \sk{D}\cap B_R)\\
&\qquad\qquad\leq {3\kappa_{d-1}\over 2}\Big({c_{d,\beta}\over (2\beta)^{d/2}}-{1\over (2\pi)^{d/2}}\Big)\int_{-\infty}^T(\sqrt{T-s}+R+r)^{d-1}e^{s/2}\dint s\\
&\qquad\qquad\qquad\qquad+{3\kappa_{d-1}c_{d,\beta}\over 2(2\beta)^{d/2}}\int_{-\infty}^T(\sqrt{T-s}+R+r)^{d-1}\Big(e^{s/2}-{\bf 1}(s\geq -2\beta)\,\big(1+{s\over 2\beta}\big)^{\beta}\Big)\,\dint s  + 4\varepsilon\\
&\qquad\qquad=:I_1(\beta)+I_2(\beta)+4\varepsilon.
\end{align*}
To deal with $I_1(\beta)$ we compute
\begin{equation}\label{eq_09}
\begin{aligned}
\int_{-\infty}^T(\sqrt{T-s}+R+r)^{d-1}e^{s/2}\dint s&=\sum\limits_{i=0}^{d-1}{d-1\choose i}(R+r)^i\int_{-\infty}^T(T-s)^{(d-1-i)/2}e^{s/2}\,\dint s\\
&=e^{T/2}\sum\limits_{i=0}^{d-1}{d-1\choose i}(R+r)^i\int_{0}^{\infty}y^{(d-1-i)/2}e^{-y/2}\,\dint y\\
&=e^{T/2}\sum\limits_{i=0}^{d-1}{d-1\choose i}(R+r)^i\Gamma\Big({d+1-i\over 2}\Big)=:C_1<\infty.
\end{aligned}
\end{equation}
Then, since $\lim\limits_{\beta\to\infty}c_{d,\beta}\beta^{-d/2}=\pi^{-d/2}$ we conclude that there exists $\beta_1:=\beta_1(R,\varepsilon)$ such that for any $\beta>\beta_1$ we have $I_1(\beta)<\varepsilon$. On the other hand, by \eqref{eq_8} we obtain
\begin{align*}
&{3\kappa_{d-1}c_{d,\beta}\over 2(2\beta)^{d/2}}(\sqrt{T-s}+R+r)^{d-1}{\bf 1}(-2\beta\leq s\leq T )\,\big(1+{s\over 2\beta}\big)^{\beta}\\
&\qquad\qquad\qquad\qquad\leq {3\kappa_{d-1}(d/2+1)^{d/2}\over 2(2\pi)^{d/2}}(\sqrt{T-s}+R+r)^{d-1}{\bf 1}(s\leq T )e^{s/2}=:f(s).
\end{align*}
By \eqref{eq_09} the function $f$ is integrable and, hence, by dominated convergence theorem there exists $\beta_2:=\beta_2(R,\varepsilon)$ such that for any $\beta>\beta_2$ we have $I_2(\beta)<\varepsilon$. This implies that
$$
\PP(\widetilde{\sk{D}}_{\beta}\cap B_R \neq \sk{D}\cap B_R) \leq \varepsilon+\varepsilon+4\varepsilon = 6\varepsilon
$$
for $\beta>\max\{\beta_1,\beta_2\}$ and hence
$$
\lim\limits_{\beta\to\infty}\PP(\widetilde{\sk{D}}_{\beta}\cap B_R \neq \sk{D}\cap B_R)=0.
$$
Together with \eqref{eq_10} this shows that, as $\beta\to\infty$, for any $C\in\cC$ the value of the capacity functional $T_\beta(C)$ of $\widetilde{\sk{D}}_\beta$ converges to $T(C)$, the value of the capacity functional of $\sk{D}$. According to \eqref{eq:ConvergenceRACS} this proves the theorem and, at the same time, the stronger version described at the end of Section \ref{subsec:ConvergenceTess}. \hfill $\Box$

\section{Geometry of weighted typical cells of the Gaussian-Delaunay tessellation}\label{sec:Cells}

\subsection{Definition and probabilistic representation}

In this section we investigate the distribution of typical cells in the Gaussian-Delaunay tessellation $\cD$. Typical cells of $\beta$- and $\beta^{\prime}$-Delaunay tessellations have been considered in the first part of the article \cite{Part1}. We have already introduced there the concept of weighted typical cells in a more general context, which we briefly recall for convenience. In this section we let $\zeta$ be the Poisson point processes in $\RR^{d-1}\times\RR$ with intensity
$$
(v,h)\rightarrow {\gamma\over (2\pi)^{d/2}}e^{h/2},\quad \gamma >0.
$$
Let $\zeta^{*}$ be defined as in \eqref{eq:ApexProcess} or, in other words, we can consider $\zeta^{*}$ as the set of apexes of paraboloid facets of the paraboloid hull process $\Phi(\zeta)$:
$$
\zeta^{*}=\{(v,h)\colon (v,h)=\apex\Pi(x_1,\ldots,x_d),\, x_i\in\zeta,1\leq i\leq d,\,\conv(v_1,\ldots,v_d)\in\cD(\zeta)\}.
$$
Consider the random marked point process
\[
\mu_{\zeta}:=\sum\limits_{(v,h)\in \zeta^{*}}\delta_{(v,M)},\qquad M:=C((v,h), \zeta^{*}))-v
\]
in $\RR^{d-1}$, whose marks are the associated and suitably centred Laguerre cells. For a given parameter $\nu\in\RR$ we now define a probability measure $\PP_{\zeta,\nu}$ on $\cC'$, the space of non-empty compact subsets of $\RR^{d-1}$, by
\begin{equation}\label{eq_2}
\PP_{\zeta,\nu}(\,\cdot\,) := {1\over \lambda_{\zeta,\nu}}\EE\sum_{(v,M)\in\mu_{\zeta}}{\bf 1}(M\in\,\cdot\,){\bf 1}_{[0,1]^{d-1}}(v)\Vol(M)^\nu,
\end{equation}
where  $\lambda_{\zeta,\nu}\in[0,\infty]$ is the normalizing constant given by
\begin{equation}\label{eq:def_gamma_const}
\lambda_{\zeta,\nu}:= \EE\sum_{(v,M)\in\mu_{\zeta}}{\bf 1}_{[0,1]^{d-1}}(v)\Vol(M)^\nu.
\end{equation}
Note that $\lambda_{\zeta,\nu}$ might be infinite for some values of $\nu$. The proof of Theorem \ref{thm:GaussianDelaunayTypicalCell} will show that $0<\lambda_{\zeta,\nu}<\infty$, provided that $\nu\ge-1$. For such $\nu$, a random simplex $Z_{\nu}$ with distribution $\PP_{\nu}:=\PP_{\zeta,\nu}$ is called the {\bf  $\Vol^{\nu}$-weighted} (or just {\bf $\nu$-weighted}) {\bf typical cell} of the Gaussian-Delaunay tessellation $\cD$. The following two special cases are of particular interest:
\begin{itemize}
\item[(i)]  $Z_{0}$ coincides with the classical typical cell of $\cD$;
\item[(ii)]  $Z_{1}$ coincides with the volume-weighted typical cell of $\cD$, which has the same distribution as the almost surely uniquely determined cell containing the origin, up to translation.
\end{itemize}

The next theorem yields an explicit description of the distribution of the $\nu$-weighted typical cell of the Gaussian-Delaunay tessellation.

\begin{theorem}\label{thm:GaussianDelaunayTypicalCell}
Fix $d\geq 2$, $\nu\ge-1$ and $\gamma > 0$. Then for any Borel set $A\subset\cC'$ we have that
\begin{align*}
\PP_{\nu}(A) &= \widehat{\alpha}_{d,\nu}\int_{\RR^{d-1}}\dint y_1\ldots\int_{\RR^{d-1}}\dint y_d\,{\bf 1}_A(\conv(y_1,\ldots,y_d))\Delta_{d-1}(y_1,\ldots,y_d)^{\nu+1}\prod_{i=1}^de^{-\|y_i\|^2/2}
\end{align*}
where $\widehat{\alpha}_{d,\nu}$ is given by
\begin{align*}
\widehat{\alpha}_{d,\nu} := {1\over 2^{{(d-1)(d+\nu+1)\over 2}}\pi^{d(d-1)\over 2}}{((d-1)!)^{\nu+1}\over d^{\nu+1\over 2}}\prod_{j=1}^{d-1}{\Gamma({j\over 2})\over\Gamma({j+\nu+1\over 2})}.
\end{align*}
\end{theorem}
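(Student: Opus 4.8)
The plan is to compute the right-hand side of \eqref{eq_2} by the multivariate Mecke formula followed by a change of variables that trades the $d$-tuples of Poisson points defining the simplicial cells for an apex variable and the relative vertex positions. Recall from Section \ref{sec:ParabHullProc} that each cell of $\cD$ corresponds to a paraboloid facet $\Pi[x_1,\ldots,x_d]$ of $\Phi(\zeta)$, equivalently to a $d$-tuple $x_1,\ldots,x_d\in\zeta$ with $\zeta\cap\Pi(x_1,\ldots,x_d)^\downarrow=\{x_1,\ldots,x_d\}$, whose mark is the centred simplex $\conv(v_1,\ldots,v_d)-v$, where $(v,h)=\apex\Pi(x_1,\ldots,x_d)$. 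First I would rewrite the sum over $\zeta^{*}$ in \eqref{eq_2} as $\tfrac1{d!}$ times a sum over ordered $d$-tuples subject to the facet condition, and apply Mecke's formula. This produces an integral over $(\RR^{d-1}\times\RR)^d$ of the test function against $\prod_{i=1}^d \tfrac{\gamma}{(2\pi)^{d/2}}e^{h_i/2}\,\dd(v_i,h_i)$, weighted by the void probability $\exp\!\big(-\mu(\Pi(x_1,\ldots,x_d)^\downarrow)\big)$ that no further point of $\zeta$ lies below the facet paraboloid.

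Next I would pass to the coordinates $(v,h,y_1,\ldots,y_d)$, where $(v,h)$ is the apex and $y_i:=v_i-v$, so that $h_i=h-\|y_i\|^2$. Since $d$ generic points determine a unique downward paraboloid, this is a bijective change of variables, and a direct Jacobian computation (reducing to the determinant of the matrix with rows $(1,2(v_i-v)^\top)$) gives $\prod_{i=1}^d\dd(v_i,h_i)=2^{d-1}(d-1)!\,\Delta_{d-1}(y_1,\ldots,y_d)\,\dd v\,\dd h\,\prod_{i=1}^d\dd y_i$. Under this substitution the mark becomes $\conv(y_1,\ldots,y_d)$, the factor $\mathbf 1_{[0,1]^{d-1}}(v)$ integrates the apex over the unit cube and contributes $1$, and $\prod_i e^{h_i/2}=e^{dh/2}\prod_i e^{-\|y_i\|^2/2}$. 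The void probability is $\mu(\Pi_{-,(v,h)}^\downarrow)=\gamma\sqrt{2/\pi}\,e^{h/2}$, independent of $v$, so the $h$-integral $\int_\RR e^{dh/2}\exp(-\gamma\sqrt{2/\pi}\,e^{h/2})\,\dd h$ is a finite Gamma integral (substitute $u=e^{h/2}$). Collecting terms, \eqref{eq_2} shows $\PP_\nu(A)$ is proportional to $\int_{(\RR^{d-1})^d}\mathbf 1_A(\conv(y_1,\ldots,y_d))\,\Delta_{d-1}(y_1,\ldots,y_d)^{\nu+1}\prod_{i=1}^d e^{-\|y_i\|^2/2}\,\dd y_i$, the extra Jacobian factor of $\Delta_{d-1}$ upgrading the weight $\Delta_{d-1}^{\nu}$ to $\Delta_{d-1}^{\nu+1}$.

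The normalising constant is then forced: taking $A=\cC'$ shows $\lambda_{\zeta,\nu}$ equals the same proportionality constant times $J:=\int_{(\RR^{d-1})^d}\Delta_{d-1}^{\nu+1}\prod_i e^{-\|y_i\|^2/2}\,\dd y_i$, so every $\gamma$- and Mecke-dependent factor cancels and $\widehat\alpha_{d,\nu}=1/J$. Writing $J=(2\pi)^{d(d-1)/2}\,\EE\big[\Delta_{d-1}(Y_1,\ldots,Y_d)^{\nu+1}\big]$ for i.i.d.\ standard Gaussian $Y_1,\ldots,Y_d$ in $\RR^{d-1}$ reduces the problem to the volume moments of a Gaussian simplex. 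Here is the crux: subtracting $Y_1$ and whitening the correlated columns $Y_{i+1}-Y_1$, whose column-covariance is $I_{d-1}+\mathbf 1\mathbf 1^\top$ of determinant $d$, yields the distributional identity $\Delta_{d-1}\overset{d}{=}\tfrac{\sqrt d}{(d-1)!}\,|\det G|$ with $G$ a $(d-1)\times(d-1)$ matrix of i.i.d.\ $N(0,1)$ entries; the factor $\sqrt d$ produces the $d^{(\nu+1)/2}$ in $\widehat\alpha_{d,\nu}$. By the Bartlett decomposition $\det(G^\top G)\overset{d}{=}\prod_{j=1}^{d-1}\chi^2_j$, so $\EE|\det G|^{\nu+1}=2^{(d-1)(\nu+1)/2}\prod_{j=1}^{d-1}\Gamma(\tfrac{j+\nu+1}{2})/\Gamma(\tfrac j2)$. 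Substituting into $\widehat\alpha_{d,\nu}=1/J$ and simplifying the powers of $2$ and $\pi$ gives exactly the claimed constant and, en route, the finiteness $0<\lambda_{\zeta,\nu}<\infty$ for $\nu\ge -1$.

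The main obstacle is twofold. Conceptually, one must justify the passage from the sum over $\zeta^{*}$ to a Mecke integral over facet-forming $d$-tuples, i.e.\ correctly encode the facet/void condition and verify measurability; this parallels the corresponding step for the $\beta$- and $\beta'$-Delaunay tessellations in \cite{Part1} and can be imported with only the height density changed. Computationally, the delicate point is the evaluation of $J$: a naive base-times-height recursion runs into noncentral chi factors because the affine hulls of subfamilies of the $Y_i$ do not pass through the origin, and it is precisely the whitening identity $\Delta_{d-1}\overset d=\tfrac{\sqrt d}{(d-1)!}|\det G|$ that circumvents this and isolates the clean product of Gamma functions. The remaining bookkeeping of constants is routine.
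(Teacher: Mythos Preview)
Your proposal is correct and follows essentially the same route as the paper: Mecke's formula over facet-forming $d$-tuples, the apex change of variables with Jacobian $2^{d-1}(d-1)!\,\Delta_{d-1}$, the void probability $\exp(-\gamma\sqrt{2/\pi}\,e^{h/2})$, and normalization via the $(\nu+1)$st moment of a Gaussian simplex. The only difference is that the paper cites this moment formula from \cite{miles_isotropic,GKT17}, whereas you derive it inline via the whitening identity $\Delta_{d-1}\overset{d}{=}\tfrac{\sqrt d}{(d-1)!}|\det G|$ and Bartlett's decomposition---a nice self-contained alternative that yields the same product of Gamma functions.
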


\begin{remark}\label{rem:GaussianDelaunayTypicalCell}
The previous theorem yields the following convenient probabilistic description of the $\nu$-weighted typical cell $Z_{\nu}$ of the Gaussian-Delaunay tessellation $\cD$. Let $Y_1,\ldots,Y_d$ be $d$ random vectors in $\RR^{d-1}$ whose joint density is proportional to
$$
\Delta_{d-1}(y_1,\ldots,y_d)^{\nu+1}\prod_{i=1}^de^{-\|y_i\|^2/2},\qquad y_1\in\RR^{d-1},\ldots,y_d\in\RR^{d-1}.
$$
Then the random simplex $\conv(Y_1,\ldots,Y_d)$ has the same distribution as $Z_{\nu}$. In other words, the $\nu$-weighted typical cell $Z_{\nu}$ of $\cD$ is a weighted Gaussian random simplex, where the weight is given by the $(\nu+1)$st volume power. As already explained in the introduction, this is the reason behind our terminology for the random tessellation $\cD$.
\end{remark}

\begin{remark}
It should be mentioned that the distribution $\PP_{\nu}$ of the $\nu$-weighted typical cell $Z_{\nu}$ of Gaussian-Delaunay tessellation is independent of the intensity parameter $\gamma$ of the underlying Poisson process $\zeta$. Indeed, changing $\gamma$ amounts to shifting the Poisson point process $\zeta$ in the vertical direction, which does not change the tessellation.
\end{remark}

\begin{proof}[Proof of Theorem \ref{thm:GaussianDelaunayTypicalCell}]
We start by observing that, up to a multiplicative constant, $\PP_{\nu}(A)$ is the same as
\begin{align*}
U(A) &:= {1\over d!}\EE\sum_{(x_1,\ldots,x_d)\in(\zeta)_{\neq}^d}{\bf 1}_A(\conv(v_1,\ldots,v_d)-z(x_1,\ldots,x_d))\\
&\qquad\times{\bf 1}_{[0,1]^{d-1}}(z(x_1,\ldots,x_d))\,{\bf 1}({\rm int}\,\Pi^\downarrow(x_1,\ldots,x_d)\cap\zeta=\varnothing)\,\Delta_{d-1}(v_1,\ldots,v_d)^\nu,
\end{align*}
where $z(x_1,\ldots,x_d)$ is the same generalized centre function as for the $\beta$-Poisson-Delaunay case for which we refer to \cite[Section 4.1]{Part1}. More concretely, $z(x_1,\ldots,x_d)\in \RR^{d-1}$  is the space coordinate of the apex of $\Pi(x_1,\ldots,x_d)$ defined as almost surely unique translate of the standard downward paraboloid $\Pi_{-}$ containing points $x_1=(v_1,h_1)\in \RR^d,\ldots,x_d=(v_d,h_d)\in \RR^d$ with affinely independent space coordinates $v_1,\ldots,v_d\in \RR^{d-1}$.
We now apply the multivariate Mecke's formula for Poisson point processes \cite[Corollary 3.2.3]{SW}, which yields
\begin{align*}
U(A) &= {\gamma^d\over d!(2\pi)^{d^2/2}}\int_{\RR^{d-1}}\dint v_1\ldots\int_{\RR^{d-1}}\dint v_d\int_\RR\dint h_1\ldots\int_\RR\dint h_d\\
&\qquad\times{\bf 1}_A(\conv(v_1,\ldots,v_d)-z(x_1,\ldots,x_d)){\bf 1}_{[0,1]^{d-1}}(z(x_1,\ldots,x_d))\\
&\qquad\times\PP({\rm int}\,\Pi^\downarrow(x_1,\ldots,x_d)\cap\zeta=\varnothing)\Delta_{d-1}(v_1,\ldots,v_d)^\nu\,e^{h_1/2+\ldots+h_d/2}.
\end{align*}
Let $(w,r)\in\RR^{d-1}\times\RR$ denotes the apex of the paraboloid $\Pi^\downarrow(x_1,\ldots,x_d)$. Then $w=z(x_1,\ldots,x_d)$, and we can write $v_i=w+z_i$ for $i=1,\ldots,d$ with uniquely determined $z_1,\ldots,z_d\in\RR^{d-1}$. Moreover, we have the relation $h_i=r-\|z_i\|^2$ for each $i=1,\ldots,d$. This allows us to introduce the transformation $T:\RR^{d-1}\times\RR\times(\RR^{d-1})^d\to(\RR^{d-1}\times\RR)^d$ by putting
$$
T(w,r,z_1,\ldots,z_d):=(w+z_1,r-\|z_1\|^2,\ldots,w+z_d,r-\|z_d\|^2) = (v_1,h_1,\ldots,v_d,h_d).
$$
It is not hard to check that for the absolute value $J(T)$ of the Jacobian of $T$ we have $J(T)=(d-1)!2^{d-1}\Delta_{d-1}(z_1,\ldots,z_d)$. In fact, using the convention that $|M|$ denotes the absolute value of the determinant of a matrix $M$, we have that
$$
J(T)=\left|
\begin{matrix}
E_{d-1} & 0 & E_{d-1} & 0 & \dots & 0\\
0 & 1 & -2z_1^{\top} & 0 &\dots & 0\\
E_{d-1} & 0 & 0 & E_{d-1} & \dots & 0\\
0 & 1 & 0 & -2z_2^{\top} & \dots & 0\\
\vdots & \vdots &  \vdots & \vdots &\ddots & \vdots \\
E_{d-1} & 0 & 0 & 0 & \dots & E_{d-1}\\
0 & 1 & 0 & 0 & \dots & -2z_d^{\top}\\
\end{matrix}
\right|,
$$
where $E_{d-1}$ is the $(d-1)\times(d-1)$ unit matrix and $z_1,\ldots,z_d$ are considered as column vectors. We can compute $J(T)$ by elementary column transformations as follows:
\begin{align*}
J(T) &= \left|
\begin{matrix}
0 & 0 & E_{d-1} & 0 & \dots & 0\\
2z_1^{\top} & 1 & -2z_1^{\top} & 0 &\dots & 0\\
0 & 0 & 0 & E_{d-1} & \dots & 0\\
2z_2^{\top} & 1 & 0 & -2z_2^{\top} & \dots & 0\\
\vdots & \vdots & \vdots & \vdots & \ddots & \vdots \\
0 & 0 & 0 & 0 & \dots & E_{d-1}\\
2z_d^{\top} & 1 & 0 & 0 & \dots & -2z_d^{\top}\\
\end{matrix}\right|
=\left|
\begin{matrix}
0 & E_{d(d-1)}\\
\begin{matrix}
2z_1^{\top} & 1 \\
\vdots & \vdots\\
2z_d^{\top} & 1\\
\end{matrix} & \mbox{\normalfont\Large\bfseries 0} \\
\end{matrix}\right|
=\left|
\begin{matrix}
1 & \ldots & 1\\
2z_1 &\ldots & 2z_d \\
\end{matrix}\right|,
\end{align*}
and the last expression is equal to $2^{d-1}(d-1)!\Delta_{d-1}(z_1,\ldots,z_d)$.
Applying the change of variables given by $T$ we obtain
\begin{align*}
U(A) &=
{\gamma^d\over d2^{d^2/2-d+1}\pi^{d^2/2}}\int_{\RR^{d-1}}\dint z_1\ldots\int_{\RR^{d-1}}\dint z_d\int_{\RR^{d-1}}\dint w\int_\RR\dint r\,{\bf 1}_A(\conv(z_1,\ldots,z_d)){\bf 1}_{[0,1]^{d-1}}(w)\\
&\quad\times\PP(\{(v,h)\in\RR^{d-1}\times\RR:h<-\|v-w\|^2+r\}\cap\zeta=\varnothing)\Delta_{d-1}(z_1,\ldots,z_d)^{\nu+1}\prod_{i=1}^de^{r/2-\|z_i\|^2/2}.
\end{align*}
Using the fact that $\zeta$ is a Poisson point process and inserting the precise form of its intensity measure, the probability in the last expression can be computed as follows:
\begin{align*}
&\PP(\{(v,h)\in\RR^{d-1}\times\RR:h<-\|v-w\|^2+r\}\cap\zeta=\varnothing)\\
&=\PP(\{(v,h)\in\RR^{d-1}\times\RR:h<-\|v\|^2+r\}\cap\zeta=\varnothing)\\
&=\exp\bigg(-{\gamma \over (2\pi)^{d/2}}\int_{\RR^{d-1}}\int_{\RR}{\bf 1}(h<-\|v\|^2+r)\,e^{h/2}\,\dint h\dint v\bigg)\\
&=\exp\bigg(-{\gamma\over (2\pi)^{d/2}}\int_{\RR^{d-1}}\int_{-\infty}^{-\|v\|^2+r}e^{h/2}\,\dint h\dint v\bigg)\\
&=\exp\bigg(-{2\gamma \over (2\pi)^{d/2}}e^{r/2}\int_{\RR^{d-1}}e^{-\|v\|^2/2}\,\dint v\bigg).
\end{align*}
The remaining integral is determined by introducing spherical coordinates in $\RR^{d-1}$, which yields
\begin{equation}\label{eq:15.01.20}
\int_{\RR^{d-1}}e^{-\|v\|^2/2}\,\dint v = (d-1)\kappa_{d-1}\int_0^\infty e^{-u^2/2}\,u^{d-2}\,\dint u = (2\pi)^{d-1\over 2}.
\end{equation}
Putting everything together and simplifying the constant leads to
\begin{align*}
&\PP(\{(v,h)\in\RR^{d-1}\times\RR:h<-\|v-w\|^2+r\}\cap\zeta=\varnothing) = \exp\bigg(-{\sqrt{2}\gamma\over\sqrt{\pi}}\,e^{r/2}\bigg).
\end{align*}
Thus, using the abbreviation $\widehat{m}_{d}:={\sqrt{2}\gamma\over\sqrt{\pi}}$, we find that
\begin{align*}
U(A) &= {\gamma^d I_d\over d2^{d^2/2-d+1}\pi^{d^2/2}}\int_{\RR^{d-1}}\dint z_1\ldots\int_{\RR^{d-1}}\dint z_d\,{\bf 1}_A(\conv(z_1,\ldots,z_d))\,\Delta_{d-1}(z_1,\ldots,z_d)^{\nu+1}\prod_{i=1}^de^{-\|z_i\|^2/2}\,,
\end{align*}
where $I_d:=\int_\RR e^{dr/2-\widehat{m}_d\,e^{r/2}}\,\dint r$.

Next, we compute the normalizing constant $U(\cC')$:
\begin{align*}
U(\cC') &=  {\gamma^d I_d\over d2^{d^2/2-d+1}\pi^{d^2/2}}\int_{\RR^{d-1}}\dint z_1\ldots\int_{\RR^{d-1}}\dint z_d\,\Delta_{d-1}(z_1,\ldots,z_d)^{\nu+1}\prod_{i=1}^de^{-\|z_i\|^2/2}\,.
\end{align*}
Now, we have to observe that the integral is the $(\nu+1)$st moment of the volume of a $(d-1)$-dimensional Gaussian simplex, up to a normalizing constant.  The  value of the moment can be found in~\cite[Equation~(70)]{miles_isotropic} where $\nu$ is required to be integer (see also~\cite[Theorem 2.3 (a)]{GKT17}, an extension to non-integer $\nu$ can be done as in the proof of Proposition~2.8 in~\cite{KTT}) and is given by
\begin{equation}\label{eq:moment_volume_gauss_simplex}
\begin{split}
{1\over (2\pi)^{d(d-1)/2}}\int_{(\RR^{d-1})^d} \Delta_{d-1}(y_1,\ldots,y_d)^{\nu+1}\,& \prod_{i=1}^de^{-\|y_i\|^2/2}\,\dint(y_1,\ldots,y_d)\\
&={d^{\nu+1\over 2}\over ((d-1)!)^{\nu+1}}2^{(\nu+1)(d-1)\over 2}\prod_{j=1}^{d-1}\bigg[{\Gamma({j+\nu+1\over 2})\over\Gamma({j\over 2})}\bigg].
\end{split}
\end{equation}
Thus,
$$
U(\cC') = \gamma^d I_{d}\, {2^{(d-1)(\nu+1)\over 2}\over 2^{1-{d\over 2}}\pi^{d\over 2}}{d^{\nu-1\over 2}\over ((d-1)!)^{\nu+1}}\prod_{j=1}^{d-1}\bigg[{\Gamma({j+\nu+1\over 2})\over\Gamma({j\over 2})}\bigg].
$$
As a consequence, simplification of the constants shows that
\begin{align*}
\PP_{\nu}(A) = {U(A)\over U(\cC')} &= \widehat{\alpha}_{d,\nu}\int_{\RR^{d-1}}\dint z_1\ldots\int_{\RR^{d-1}}\dint z_d\,{\bf 1}_A(\conv(z_1,\ldots,z_d))\Delta_{d-1}(z_1,\ldots,z_d)^{\nu+1}\prod_{i=1}^de^{-\|z_i\|^2/2}
\end{align*}
with $\widehat{\alpha}_{d,\nu}$ as in the statement of the theorem.
\end{proof}

\begin{remark}
In~\cite[Theorem 2.3 (a)]{GKT17}, Equation~\eqref{eq:moment_volume_gauss_simplex} is stated for $\nu\geq -1$ only, but in fact it remains true in the range $\nu>-2$ by analytic continuation. This implies that Theorem~\ref{thm:GaussianDelaunayTypicalCell} remains true in the range $\nu>-2$, but we shall not need this. This observation can be applied to slightly extend the range of validity of the  results of Section~\ref{subsec:expected_volume}.
\end{remark}

\subsection{Expected angle sums, volume moments and cell intensities}\label{subsec:expected_volume}

The probabilistic representation of weighted typical cells in Theorem \ref{thm:GaussianDelaunayTypicalCell} allows us to determine various geometric quantities associated with such cells. We start with the \textbf{expected angle sums} of $Z_{\nu}$. For a simplex $T=\conv(X_1,\ldots,X_d)\subset\RR^{d-1}$ we denote by
$$
\sigma_k(T) := \sum_{1\leq i_1<\ldots<i_k\leq d\atop F:=\conv(X_{i_1},\ldots,X_{i_k})}\beta(F,T)
$$
the sum of internal angles of $T$ at all its $k$-vertex faces $F$, see \cite[Chapter 6.5]{SW}. Further, by $\Sigma_{d-1}$ we denote a $(d-1)$-dimensional regular simplex. We recall that in \cite[Proposition 6.3]{Part1} of part I of this paper  we have seen that, as $\beta\to\infty$, the expected angle sums of the $\nu$-weighted typical cell of a $\beta$-Delaunay tessellation approach those of a regular simplex. Given Theorem \ref{tm:CobvergenceGaussian} it can now be anticipated that the expected angle sums of $\nu$-weighted typical cell of the limiting Gaussian-Delaunay tessellation coincide with that of a regular simplex. Our next result, which is essentially a consequence of Theorem \ref{thm:GaussianDelaunayTypicalCell}, confirms that this is indeed the case.

\begin{corollary}\label{cor:AngleSumGaussianDelaunay}
Let $Z_{\nu}$ be the $\nu$-weighted typical cell of the Gaussian-Delaunay tessellation with integer $\nu\ge-1$. Then, for all $k\in\{1,\ldots,d\}$, we have $\EE\sigma_k(Z_{\nu})=\sigma_k(\Sigma_{d-1})$.
\end{corollary}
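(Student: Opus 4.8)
The plan is to argue directly from the explicit law of $Z_{\nu}$ furnished by Theorem~\ref{thm:GaussianDelaunayTypicalCell} and Remark~\ref{rem:GaussianDelaunayTypicalCell}: the cell $Z_{\nu}$ has the same distribution as $\conv(Y_1,\ldots,Y_d)$, where $(Y_1,\ldots,Y_d)$ has joint density proportional to $\Delta_{d-1}(y_1,\ldots,y_d)^{\nu+1}\prod_{i=1}^{d}e^{-\|y_i\|^2/2}$. This is precisely the law of a $\Vol^{\nu+1}$-weighted Gaussian random simplex, and the same object arises as the $\beta\to\infty$ limit of the $\nu$-weighted typical cell $Z_{\nu}^{(\beta)}$ of the $\beta$-Delaunay tessellation studied in part~I. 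My strategy is therefore to transfer the angle-sum identity from the $\beta$-Delaunay cells, where it holds in the limit by Proposition~6.3 of part~I \cite{Part1}, to the Gaussian cell by passing to the limit inside the expectation. Note that this uses the explicit distribution of the cell and not the tessellation convergence of Theorem~\ref{tm:CobvergenceGaussian}, which concerns skeletons as random closed sets and does not by itself yield convergence of the Palm-type typical cell.

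To make this precise I would first record that all internal angles, and hence $\sigma_k$, are invariant under similarities; in particular the expected angle sums of $Z_{\nu}^{(\beta)}$ coincide with those of its rescaling by $\sqrt{2\beta}$. The joint density of the rescaled cell has the form $C_{\beta}\,\Delta_{d-1}(y_1,\ldots,y_d)^{\nu+1}\prod_{i=1}^{d}f_{\beta}(y_i)$, where $f_{\beta}$ is the rescaled beta marginal and $f_{\beta}\to(2\pi)^{-(d-1)/2}e^{-\|\cdot\|^2/2}$ pointwise, by the same elementary limit $(1+s/(2\beta))^{\beta}\to e^{s/2}$ already used in \eqref{eq:28.01_2}. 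The normalizing constants converge, the relevant $(\nu+1)$st volume moments of the rescaled beta simplices tending to the Gaussian value \eqref{eq:moment_volume_gauss_simplex}, so that the joint densities of $Z_{\nu}^{(\beta)}$ converge pointwise to that of $Z_{\nu}$. Scheff\'e's lemma upgrades this to convergence in total variation, and in particular $Z_{\nu}^{(\beta)}$ converges in distribution to $Z_{\nu}$.

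It then remains to interchange the limit with the expectation of $\sigma_k$. The functional $\sigma_k$ takes values in $[0,\binom{d}{k}]$, each internal angle lying in $[0,1]$, and it is continuous at every $d$-tuple of affinely independent points; the set of affinely dependent $d$-tuples is Lebesgue-null and hence carries zero mass under the absolutely continuous law of $Z_{\nu}$. Thus $\sigma_k$ is a bounded, almost surely continuous functional, and the portmanteau theorem yields
\[
\EE\sigma_k(Z_{\nu})=\lim_{\beta\to\infty}\EE\sigma_k\big(Z_{\nu}^{(\beta)}\big)=\sigma_k(\Sigma_{d-1}),
\]
where the last equality is Proposition~6.3 of part~I; it is here that the hypothesis that $\nu$ be an integer is inherited, matching that proposition and the range in which \eqref{eq:moment_volume_gauss_simplex} is stated.

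I expect the genuinely delicate point to be the convergence in distribution $Z_{\nu}^{(\beta)}\to Z_{\nu}$, and not the interchange of limit and expectation, which is free once the angles are seen to be bounded and almost surely continuous. The care needed is in the bookkeeping of the rescaling compatible with $Q_{\beta}$: one must check that, after a similarity that leaves angles untouched, the product of the beta marginals together with the volume weight $\Delta_{d-1}^{\nu+1}$ and the normalizing constant $C_{\beta}$ \emph{jointly} converge to the Gaussian weighted density, not merely the marginals. An alternative self-contained route, avoiding part~I, would first use the exchangeability of the density to reduce to a single expected internal angle, writing $\EE\sigma_k(Z_{\nu})=\binom{d}{k}\,\EE\,\beta(\conv(Y_1,\ldots,Y_k),Z_{\nu})$ and $\sigma_k(\Sigma_{d-1})=\binom{d}{k}\beta_k$, and then evaluate this single expectation through the Gaussian representation of the internal angle; but the limiting argument is shorter and leans precisely on the explicit description of Theorem~\ref{thm:GaussianDelaunayTypicalCell}.
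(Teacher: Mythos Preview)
Your argument is correct and close in spirit to the paper's, both resting on the identification of $Z_\nu$ with a $\Vol^{\nu+1}$-weighted Gaussian simplex from Theorem~\ref{thm:GaussianDelaunayTypicalCell} followed by a $\beta\to\infty$ passage from the beta world. The routes differ slightly: the paper first strips the volume weight by taking the limit in \cite[Remark~4.2]{beta_polytopes}, obtaining $\EE\sigma_k(\conv(Y_1,\ldots,Y_d))=\EE\sigma_k(\conv(N_1,\ldots,N_d))$ with $N_1,\ldots,N_d$ i.i.d.\ standard Gaussians, and then quotes the identity $\EE\sigma_k(\conv(N_1,\ldots,N_d))=\sigma_k(\Sigma_{d-1})$ from \cite{GaussianSimplexAngles,GoetzeKabluchkoZap}. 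You instead keep the weight, establish the distributional convergence of the rescaled $Z_\nu^{(\beta)}$ to $Z_\nu$ via pointwise density convergence and Scheff\'e, and then feed in \cite[Proposition~6.3]{Part1} directly. Your version has the virtue of being more explicit about the interchange of limit and expectation (boundedness of $\sigma_k$, a.s.\ continuity at affinely independent tuples, portmanteau), which the paper defers to the cited references; the paper's version, on the other hand, isolates the structurally interesting fact that the weight does not affect the expected angle sums even in the Gaussian limit.
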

\begin{proof}
Remark \ref{rem:GaussianDelaunayTypicalCell} yields that
$$
\EE\sigma_k(Z_{\nu}) = \EE\sigma_k(\conv(Y_1,\ldots,Y_d)),
$$
where $Y_1,\ldots,Y_d$ are random vectors in $\RR^{d-1}$ with joint density proportional to
$$
\Delta_{d-1}(y_1,\ldots,y_d)^{\nu+1}\prod_{i=1}^de^{-\|y_i\|^2/2},\qquad y_1,\ldots,y_d\in\RR^{d-1}.
$$
Taking the limit, as $\beta\to\infty$, in \cite[Remark 4.2]{beta_polytopes} (the argument is based on~\cite[Lemma~1.1]{beta_polytopes} and explained in the proof of Proposition~6.3 in~\cite{Part1}), we have that
$$
\EE\sigma_k(\conv(Y_1,\ldots,Y_d)) = \EE\sigma_k(\conv(N_1,\ldots,N_d)),
$$
where $N_1,\ldots,N_d$ are $d$ independent standard Gaussian random vectors in $\RR^{d-1}$.
Furthermore, in~\cite{GaussianSimplexAngles,GoetzeKabluchkoZap} it has been shown that the expected angle sum $\EE \sigma_k(N_1,\ldots,N_d)$ coincides with $\sigma_k(\Sigma_{d-1})$. Taking everything together, we arrive at
$$
\EE\sigma_k(Z_{\nu}) = \EE\sigma_k(\conv(Y_1,\ldots,Y_d)) = \EE\sigma_k(\conv(N_1,\ldots,N_d)) = \sigma_k(\Sigma_{d-1}),
$$
and the proof is complete.
\end{proof}

\begin{remark}
Note that \cite[Remark 4.2]{beta_polytopes}, which has been used in the previous proof, requires $\nu$ to be integer but it is natural to conjecture that it remains true without this assumption. This in turn would imply that Corollary \ref{cor:AngleSumGaussianDelaunay} also holds under the same circumstances.
\end{remark}

Next, we compute the moments of the volume of $Z_{\nu}$.

\begin{corollary}\label{cor:MomentsGaussianSimplex}
Let $\nu\ge -1$. Then, for all $s\ge-\nu-1$ we have that
$$
\EE\Vol(Z_{\nu})^s = 2^{{s(d-1)\over 2}}{d^{s/2}\over((d-1)!)^s}\prod_{j=1}^{d-1}{\Gamma({j+s+\nu+1\over 2})\over\Gamma({j+\nu+1\over 2})}.
$$
\end{corollary}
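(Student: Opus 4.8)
The plan is to reduce the statement to a moment of the volume of a Gaussian random simplex and then to read off the value from the moment formula \eqref{eq:moment_volume_gauss_simplex} already used in the proof of Theorem \ref{thm:GaussianDelaunayTypicalCell}. By Remark \ref{rem:GaussianDelaunayTypicalCell}, $Z_\nu$ has the same distribution as $\conv(Y_1,\ldots,Y_d)$, where the random vectors $Y_1,\ldots,Y_d\in\RR^{d-1}$ have joint density proportional to $\Delta_{d-1}(y_1,\ldots,y_d)^{\nu+1}\prod_{i=1}^d e^{-\|y_i\|^2/2}$ and where, exactly as in the proof of Theorem \ref{thm:GaussianDelaunayTypicalCell}, $\Delta_{d-1}(y_1,\ldots,y_d)=\Vol(\conv(y_1,\ldots,y_d))$. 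Since the normalising constant of this density is the reciprocal of $\int_{(\RR^{d-1})^d}\Delta_{d-1}^{\nu+1}\prod_{i=1}^d e^{-\|y_i\|^2/2}\,\dint(y_1,\ldots,y_d)$, the first step is to write
\begin{equation*}
\EE\Vol(Z_\nu)^s=\frac{\displaystyle\int_{(\RR^{d-1})^d}\Delta_{d-1}(y_1,\ldots,y_d)^{s+\nu+1}\prod_{i=1}^d e^{-\|y_i\|^2/2}\,\dint(y_1,\ldots,y_d)}{\displaystyle\int_{(\RR^{d-1})^d}\Delta_{d-1}(y_1,\ldots,y_d)^{\nu+1}\prod_{i=1}^d e^{-\|y_i\|^2/2}\,\dint(y_1,\ldots,y_d)},
\end{equation*}
using $\Vol(\conv(Y_1,\ldots,Y_d))^s=\Delta_{d-1}(Y_1,\ldots,Y_d)^s$.

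Both integrals are instances of the Gaussian simplex moment \eqref{eq:moment_volume_gauss_simplex}: the denominator is the case of exponent $\nu+1$, while the numerator is the same formula with the parameter $\nu$ replaced by $s+\nu$, so that its exponent becomes $s+\nu+1$. The second step is therefore to verify that both exponents lie in the admissible range. Formula \eqref{eq:moment_volume_gauss_simplex} applies whenever its parameter is at least $-1$; for the denominator this is the assumption $\nu\ge-1$, and for the numerator it is precisely the condition $s+\nu\ge-1$, i.e.\ $s\ge-\nu-1$, which is the hypothesis of the corollary. In particular both integrals are finite and strictly positive, so the ratio is well defined.

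The final step is pure bookkeeping: substituting \eqref{eq:moment_volume_gauss_simplex} into numerator and denominator, the Gaussian normalisation $(2\pi)^{d(d-1)/2}$ and the products $\prod_{j=1}^{d-1}\Gamma(j/2)$ cancel, while the remaining factors combine as
$$
d^{\frac{s+\nu+1}{2}-\frac{\nu+1}{2}}=d^{s/2},\quad ((d-1)!)^{-(s+\nu+1)+(\nu+1)}=((d-1)!)^{-s},\quad 2^{\frac{(s+\nu+1)(d-1)}{2}-\frac{(\nu+1)(d-1)}{2}}=2^{\frac{s(d-1)}{2}},
$$
and the Gamma quotients collapse to $\prod_{j=1}^{d-1}\Gamma(\tfrac{j+s+\nu+1}{2})/\Gamma(\tfrac{j+\nu+1}{2})$, which is exactly the asserted expression. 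There is no serious obstacle here; the only genuine point of care is that the exponent $s+\nu+1$ in the numerator must keep \eqref{eq:moment_volume_gauss_simplex} applicable, which is exactly what the hypothesis $s\ge-\nu-1$ guarantees (and, using the analytic continuation mentioned in the remark following Theorem \ref{thm:GaussianDelaunayTypicalCell}, the slightly larger range $s>-\nu-2$ could in fact be reached).
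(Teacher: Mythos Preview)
Your proof is correct and follows essentially the same route as the paper: both use Theorem \ref{thm:GaussianDelaunayTypicalCell} (via Remark \ref{rem:GaussianDelaunayTypicalCell}) to express $\EE\Vol(Z_\nu)^s$ as a Gaussian integral with exponent $s+\nu+1$, evaluate it using the moment formula \eqref{eq:moment_volume_gauss_simplex}, and simplify. The only cosmetic difference is that the paper multiplies by the explicit constant $\widehat{\alpha}_{d,\nu}$ while you write the same quantity as a ratio of two integrals, but the arithmetic is identical.
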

\begin{proof}
Using Theorem \ref{thm:GaussianDelaunayTypicalCell} together with \cite[Theorem 2.3 (a)]{GKT17} (restated in~\eqref{eq:moment_volume_gauss_simplex}) we have that
\begin{align*}
\EE\Vol(Z_{\nu})^s &= \widehat{\alpha}_{d,\nu}\int_{\RR^{d-1}}\dint y_1\ldots\int_{\RR^{d-1}}\dint y_d\,\Delta_{d-1}(y_1,\ldots,y_d)^{s+\nu+1}\,\prod_{i=1}^de^{-\|y_i\|^2/2}\\
&=\widehat{\alpha}_{d,\nu}\,(2\pi)^{d(d-1)\over 2}\,2^{(d-1)(s+\nu+1)\over 2}\,{d^{s+\nu+1\over 2}\over((d-1)!)^{s+\nu+1}}\prod_{j=1}^{d-1}{\Gamma({j+s+\nu+1\over 2})\over\Gamma({j\over 2})}.
\end{align*}
Plugging in the value for $\widehat{\alpha}_{d,\nu}$ completes the proof.
\end{proof}

From the previous corollary we can derive a probabilistic representation of the volume of $Z_\nu$, which is similar to that in \cite[Theorem 2.5(a)]{GKT17} for the volume of a Gaussian random simplex. To this end we recall that a random variable $X_k$ is said to have a $\chi^2$-distribution with $\kappa>0$ degrees of freedom, provided it has density
$$
t\mapsto {1\over 2^{\kappa\over 2}\Gamma({\kappa\over 2})}t^{{\kappa\over 2}-1}e^{-t/2},\qquad t>0,
$$
with respect to the Lebesgue measure on $(0,\infty)$.

\begin{corollary}
Let $\nu\ge-1$ and let $X_{j+\nu+1}$ be independent $\chi^2$-random variables with $j+\nu+1$ degrees of freedom, for $j\in\{1,\ldots,d-1\}$. Then the following random variables are identically distributed:
$$
((d-1)!)^2\Vol(Z_\nu)^2\qquad\text{and}\qquad  d \prod_{j=1}^{d-1}X_{j+\nu+1}.
$$
\end{corollary}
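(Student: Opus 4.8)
The plan is to compare the full families of real moments (Mellin transforms) of the two positive random variables and then invoke the fact that the law of a positive random variable is pinned down by its Mellin transform on a vertical strip. Write $W:=((d-1)!)^2\Vol(Z_\nu)^2$ and $V:=d\prod_{j=1}^{d-1}X_{j+\nu+1}$; both are almost surely positive, so it suffices to show $\EE W^s=\EE V^s$ for $s$ ranging over an interval and then upgrade this to equality in law.

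First I would compute the moments of $V$. A one-line substitution $u=t/2$ in the defining $\chi^2$-density shows that a $\chi^2$-variable $X$ with $\kappa$ degrees of freedom satisfies $\EE X^s=2^s\,\Gamma(\kappa/2+s)/\Gamma(\kappa/2)$ for $s>-\kappa/2$. By independence of the $X_{j+\nu+1}$ the moments multiply, whence
\[
\EE V^s=d^s\,2^{s(d-1)}\prod_{j=1}^{d-1}\frac{\Gamma\!\big(\tfrac{j+\nu+1}{2}+s\big)}{\Gamma\!\big(\tfrac{j+\nu+1}{2}\big)},
\]
valid for all real $s>-\tfrac{\nu+2}{2}$ (the binding constraint being $j=1$). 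Next I would read off the moments of $W$ from Corollary \ref{cor:MomentsGaussianSimplex}, applied with exponent $2s$ (admissible for $2s\ge-\nu-1$): since $\EE W^s=((d-1)!)^{2s}\,\EE\Vol(Z_\nu)^{2s}$ and the prefactor $((d-1)!)^{2s}$ cancels the $((d-1)!)^{-2s}$ produced by the corollary, one gets
\[
\EE W^s=2^{s(d-1)}\,d^s\prod_{j=1}^{d-1}\frac{\Gamma\!\big(\tfrac{j+\nu+1}{2}+s\big)}{\Gamma\!\big(\tfrac{j+\nu+1}{2}\big)}.
\]
Comparing the two displays gives $\EE W^s=\EE V^s$ for every real $s\ge-\tfrac{\nu+1}{2}$.

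Finally, to pass from equality of all real moments to equality in distribution I would argue via analyticity of the Mellin transform rather than a Carleman-type criterion; this is the step that requires care, since the moments here grow like $s^{(d-1)s}$ and Carleman's condition already fails for $d\ge4$. For a positive random variable $Y$ with $\EE Y^c<\infty$ for $c$ in an open interval $I$, the map $z\mapsto\EE Y^z$ is finite and holomorphic on the strip $\{\Re z\in I\}$ (dominated convergence and differentiation under the integral, using $|Y^z|=Y^{\Re z}$). Thus $\EE W^z$ and $\EE V^z$ are both holomorphic on the common strip $\Re z>-\tfrac{\nu+1}{2}$ and agree on the real half-line contained in it; by the identity theorem they coincide on the whole strip, in particular on a vertical line, and Mellin inversion then recovers the same distribution for $W$ and $V$. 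Hence $((d-1)!)^2\Vol(Z_\nu)^2$ and $d\prod_{j=1}^{d-1}X_{j+\nu+1}$ are identically distributed. The main obstacle is exactly this determinacy step: it is essential that Corollary \ref{cor:MomentsGaussianSimplex} supplies a continuum of real moments (not merely the integer ones), so that the Mellin/analyticity argument applies and one is not forced to rely on the failing moment-determinacy criterion.
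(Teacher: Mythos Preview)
Your proof is correct and follows essentially the same route as the paper: compute the $s$-th moments of both sides via Corollary~\ref{cor:MomentsGaussianSimplex} and the explicit $\chi^2$-moments, and match them. The paper's proof simply asserts that equality of $\EE W^s$ and $\EE V^s$ for all $s>0$ is sufficient, without further justification; you go further and address the determinacy step explicitly, correctly observing that Carleman's condition fails for $d\ge 4$ and replacing it by the analyticity of $z\mapsto\EE Y^z$ on a strip together with Mellin inversion (equivalently, the moment generating function of $\log Y$ is finite on an open interval and hence determines the law of $\log Y$). This extra care is well placed and makes your argument more complete than the paper's own proof.
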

\begin{proof}
Let $Y$ be the random variable on the right hand side in the statement of the theorem. It is sufficient to show that, for $s>0$, $\EE[((d-1)!\Vol(Z_\nu))^{2s}]=\EE[Y^{s}]$ holds. From Corollary \ref{cor:MomentsGaussianSimplex} we have that
\begin{equation}\label{22-06-1}
\EE[((d-1)!\Vol(Z_\nu))^{2s}] = d^{s}\prod_{j=1}^{d-1}2^{s}{\Gamma({j+\nu+1\over 2}+s)\over\Gamma({j+\nu+1\over 2})}.
\end{equation}
On the other hand, for every $j\in\{1,\ldots,d-1\}$ one has that
$$
\EE[X_{j+\nu+1}^s] = 2^{s}{\Gamma({j+\nu+1\over 2}+s)\over\Gamma({j+\nu+1\over 2})}.
$$
Thus, from the assumed independence of $X_{\nu+2},\ldots,X_{\nu+d}$ it follows that
\begin{align}\label{22-06-2}
\EE[Y^{s}] = d^{s}\EE\Big[\prod_{j=1}^{d-1}X^s_{j+\nu+1}\Big] = d^{s}\prod_{j=1}^{d-1}\EE[X_{j+\nu+1}^s] = d^{s}\prod_{j=1}^{d-1}2^{s}{\Gamma({j+\nu+1\over 2}+s)\over\Gamma({j+\nu+1\over 2})}.
\end{align}
Comparison of \eqref{22-06-1} and \eqref{22-06-2} proves the claim.
\end{proof}

Corollary \ref{cor:MomentsGaussianSimplex} also allows us to compute the \textbf{intensity} $\gamma_j(\cD)$ \textbf{of $j$-dimensional faces} of $\cD$, that is, $\gamma_j(\cD)$ is the mean number of $j$-dimensional faces of $\cD$ per unit volume in $\RR^{d-1}$; see~\cite[p.~450 and \S~4.1]{SW} for an exact definition.

\begin{corollary}
For all $j\in\{0,1,\ldots,d-1\}$ we have that
$$
\gamma_j(\cD)={\sigma_{j+1}(\Sigma_{d-1})\over\EE\Vol(Z_{0})}\qquad\text{with}\qquad\EE\Vol(Z_0)=2^{d-1\over 2}{\sqrt{d}\over(d-1)!}\Gamma\Big({d+1\over 2}\Big).
$$
\end{corollary}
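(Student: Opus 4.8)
The plan is to assemble the corollary from three ingredients already available: a general intensity--angle-sum identity for stationary normal simplicial tessellations, the explicit expected angle sums of Corollary~\ref{cor:AngleSumGaussianDelaunay}, and the first volume moment supplied by Corollary~\ref{cor:MomentsGaussianSimplex}. The central step is to prove that for every $j\in\{0,1,\ldots,d-1\}$ one has
$$
\gamma_j(\cD)=\gamma_{d-1}(\cD)\,\EE\sigma_{j+1}(Z_0),
$$
where $Z_0$ is the classical typical cell and one recalls that $\sigma_{j+1}(T)=\sum_{F\in\cF_j(T)}\beta(F,T)$ is exactly the sum of internal angles of the simplex $T$ at its $j$-dimensional faces. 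Granting this identity, Corollary~\ref{cor:AngleSumGaussianDelaunay} (applicable since $\nu=0$ is an integer $\ge-1$ and $j+1\in\{1,\ldots,d\}$) replaces $\EE\sigma_{j+1}(Z_0)$ by $\sigma_{j+1}(\Sigma_{d-1})$, and it remains only to evaluate $\gamma_{d-1}(\cD)$ and $\EE\Vol(Z_0)$.

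To prove the identity I would use a mass-transport (Palm) argument applied to the weighted flag measure that assigns to each incident pair $(F,C)$, with $F$ a $j$-face of the tessellation contained in a cell $C$, the weight $\beta(F,C)$. Grouping the flags by their cell and anchoring each at a translation-covariant centre of $C$, the definition of the typical cell gives intensity $\gamma_{d-1}(\cD)\,\EE\sigma_{j+1}(Z_0)$, because $\sum_{F\in\cF_j(C)}\beta(F,C)=\sigma_{j+1}(C)$. Grouping instead by the face $F$ and using that the normal cones of the cells meeting along a fixed $j$-face tile the orthogonal $(d-1-j)$-dimensional complement, so that $\sum_{C\supseteq F}\beta(F,C)=1$, yields intensity $\gamma_j(\cD)$. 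Since both anchorings distribute the same total flag mass per unit volume, the mass-transport principle forces the two intensities to coincide. The cell intensity is then pinned down by the fact that the cells tile $\RR^{d-1}$ with volume fraction $1$, so that $\gamma_{d-1}(\cD)=1/\EE\Vol(Z_0)$; this is consistent with (and can be read off as) the case $j=d-1$ of the identity, since the unique $d$-vertex face of $\Sigma_{d-1}$ is the whole simplex and $\sigma_d(\Sigma_{d-1})=1$. Combining these facts gives
$$
\gamma_j(\cD)=\frac{\sigma_{j+1}(\Sigma_{d-1})}{\EE\Vol(Z_0)}.
$$

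Finally, I would compute $\EE\Vol(Z_0)$ by specialising Corollary~\ref{cor:MomentsGaussianSimplex} to $\nu=0$ and $s=1$, which gives
$$
\EE\Vol(Z_0)=2^{(d-1)/2}\,\frac{\sqrt d}{(d-1)!}\prod_{j=1}^{d-1}\frac{\Gamma(\frac{j+2}{2})}{\Gamma(\frac{j+1}{2})},
$$
and the product telescopes to $\Gamma(\frac{d+1}{2})/\Gamma(1)=\Gamma(\frac{d+1}{2})$, producing the stated value. The genuinely delicate step is the intensity--angle-sum identity, which rests on the Palm/mass-transport framework for stationary tessellations (the analogue for the $\beta$-Delaunay tessellation is established in part~I and the general machinery is standard, cf.\ \cite[Chapter~10]{SW}); once it is in place the remaining work—the angle-sum substitution and the telescoping Gamma computation—is entirely routine.
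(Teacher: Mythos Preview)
Your proposal is correct and follows essentially the same approach as the paper: both use the identity $\gamma_j(\cD)=\gamma_{d-1}(\cD)\,\EE\sigma_{j+1}(Z_0)$ together with $\gamma_{d-1}(\cD)=1/\EE\Vol(Z_0)$, then apply Corollary~\ref{cor:AngleSumGaussianDelaunay} and specialise Corollary~\ref{cor:MomentsGaussianSimplex} to $\nu=0$, $s=1$. The only difference is that the paper simply cites \cite[Theorem~10.1.3]{SW} and \cite[Equation~(10.4)]{SW} for the two general tessellation identities, whereas you sketch the underlying mass-transport argument and spell out the telescoping of the Gamma product.
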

\begin{proof}
From \cite[Theorem 10.1.3]{SW} we have that $\gamma_j(\cD)=\gamma_{d-1}(\cD)\EE\sigma_{j+1}(Z_{0})$. The result thus follows from Corollary \ref{cor:AngleSumGaussianDelaunay} together with the fact that $\gamma_{d-1}(\cD)=1/\EE\Vol(Z_{0})$ from \cite[Equation (10.4)]{SW}. The value for $\EE\Vol(Z_{0})$ can be deduced from Corollary \ref{cor:MomentsGaussianSimplex} by taking $\nu=0$ and $s=1$.
\end{proof}

\subsection*{Acknowledgement}
ZK was supported by the DFG under Germany's Excellence Strategy  EXC 2044 -- 390685587, \textit{Mathematics M\"unster: Dynamics - Geometry - Structure}. CT and ZK were supported by the DFG via the priority program \textit{Random Geometric Systems}.

\addcontentsline{toc}{section}{References}

\end{document}